\documentclass[12pt,reqno]{amsart}
\usepackage{amsfonts}

\usepackage{amssymb,amsmath,graphicx,amsfonts,euscript}
\usepackage{color}

\setlength{\textheight}{9in} \setlength{\textwidth}{6.2in}
\setlength{\oddsidemargin}{0.2in} \setlength{\evensidemargin}{0.2in}
\setlength{\parindent}{0.2in} \setlength{\topmargin}{-0.1in}
\setcounter{section}{0} \setcounter{figure}{0}
\setcounter{equation}{0}

\newtheorem{thm}{Theorem}[section]

\newtheorem{Pros}{Proposition}[section]
\newtheorem{lemma}{Lemma}[section]

\theoremstyle{definition}
\newtheorem{define}{Definition}[section]

\theoremstyle{remark}
\newtheorem{rem}{Remark}[section]

\allowdisplaybreaks \voffset=-0.2in \numberwithin{equation}{section}

\begin{document}
\bigskip

\centerline{\Large\bf  Global well-posedness for the 2D Euler-Boussinesq-B$\rm\acute{e}$nard }
\smallskip

\centerline{\Large\bf   equations with critical dissipation }

\bigskip

\centerline{Zhuan Ye}

\medskip
\centerline{Department of Mathematics and Statistics, Jiangsu Normal University,
}
\medskip

\centerline{101 Shanghai Road, Xuzhou 221116, Jiangsu, PR China}

\medskip

\centerline{E-mail: \texttt{yezhuan815@126.com
}}

\medskip
\bigskip

{\bf Abstract:}~~%
This present paper is dedicated to the study of the Cauchy problem of the two-dimensional Euler-Boussinesq-B$\rm\acute{e}$nard equations which couple the incompressible Euler equations for the velocity and a transport equation with critical dissipation for the temperature. We show that there is a global unique solution to this model with Yudovich's type data. This settles the global regularity problem which was remarked by Wu and Xue (J. Differential Equations 253:100--125, 2012).

{\vskip 1mm
 {\bf AMS Subject Classification 2010:}\quad 35Q35; 35B65; 35R11; 76B03.

 {\bf Keywords:}
B$\rm\acute{e}$nard equations; Boussinesq equations; Global regularity; Uniqueness; Yudovich's type data.}

\vskip .3in
\section{Introduction and main results}
The two-dimensional (2D) incompressible Euler-Boussinesq-B$\rm\acute{e}$nard equations with critical dissipation for the temperature can be represented in the form
\begin{equation}\label{BEBou}
\left\{\aligned
&\partial_{t}u+(u \cdot \nabla) u+\nabla p=\theta e_{2},\,\,\,\,\,\,\,x\in \mathbb{R}^{2},\,\,t>0, \\
&\partial_{t}\theta+(u \cdot \nabla) \theta+\Lambda \theta=\gamma u_{2}, \\
&\nabla\cdot u=0,\\
&u(x, 0)=u_{0}(x),  \quad \theta(x,0)=\theta_{0}(x),
\endaligned\right.
\end{equation}
where $u(x,\,t)=(u_{1}(x,\,t),\,u_{2}(x,\,t))$ is a vector field denoting
the velocity, $\theta=\theta(x,\,t)$ is a scalar function denoting
the temperature and $p$ is the scalar pressure. The forcing term $\theta e_{2}$ in the velocity equation models the acting of the buoyancy force on the fluid motion, $e_{2}=(0,\,1)$ is the canonical vector and $\gamma\geq0$ is a non-dimensional constant. The first equation $\eqref{BEBou}_{1}$ is the 2D Euler equation with a buoyancy forcing and the second equation $\eqref{BEBou}_{2}$ is a transport-diffusion equation forced by the second component of the velocity. The fractional Laplacian
$\Lambda\triangleq(-\Delta)^{\frac{1}{2}}$ is defined through the Fourier transform, namely
$$\widehat{\Lambda
f}(\xi)=|\xi|\hat{f}(\xi).$$
This fractional Laplacian allows us to study a family of equations simultaneously and may be physically relevant. Actually, there are geophysical circumstances in which \eqref{BEBou} with fractional Laplacian may arise. Flows in the middle
atmosphere traveling upward undergo changes due to the changes of atmospheric
properties, although the incompressibility and Boussinesq approximations are applicable. The effect of kinematic and thermal diffusion is attenuated by the thinning of atmosphere. This anomalous attenuation can be modeled by using the space fractional Laplacian (see e.g., \cite{Constantin,Gill} and references therein). In view of the underlying scaling invariance associated with \eqref{BEBou} (by regarding $\gamma=0$) and the available $L^{\infty}$-maximum principle, it is customary to refer to the dissipation $\Lambda$ as critical. The B$\rm\acute{e}$nard equations describe the Rayleigh-B$\rm\acute{e}$nard convective motion in a heated 2D inviscid incompressible fluid under thermal effects (see e.g. \cite{AP,Chan81,FJT,FMT,MW,Ra}). Moreover, the B$\rm\acute{e}$nard system is also one of the most popular test-problems for comparing numerical algorithms, see  \cite{ATGKZMH} for example. The mathematical analysis of the B$\rm\acute{e}$nard convection system has been studied in Foias et al. \cite{FMT} (see also \cite{RTemam97}), where the existence and uniqueness of
weak solutions in dimension two and three were proved, along with the existence of a finite-dimensional global attractor in space dimension two. Since then, many important results concerning the B$\rm\acute{e}$nard convection system have been obtained. We refer the reader to \cite{ATGKZMH,CJEWNO,DP2,FJT,FMT,FLTJNS,MW,WuXue2012,Yena17} for recent (analytical and numerical) studies concerning the B$\rm\acute{e}$nard convection system.

\vskip .1in
In the case when $\gamma=0$, the system \eqref{BEBou} reduces to the corresponding 2D critical Euler-Boussinesq equations
\begin{equation}\label{Bouswqa21}
\left\{\aligned
&\partial_{t}u+(u \cdot \nabla) u+\nabla p=\theta e_{2},\,\,\,\,\,\,\,x\in \mathbb{R}^{2},\,\,t>0, \\
&\partial_{t}\theta+(u \cdot \nabla) \theta+\Lambda \theta=0, \\
&\nabla\cdot u=0,\\
&u(x, 0)=u_{0}(x),  \quad \theta(x,0)=\theta_{0}(x).
\endaligned\right.
\end{equation}
The Boussinesq equations are of relevance to study a number of models coming from atmospheric or oceanographic turbulence where rotation and stratification play an important role. Moreover, the Boussinesq equations also play an important role in the study of Raleigh-B$\rm\acute{e}$nard convection and share with some key features of the three-dimensional Euler and Navier--Stokes equations such as the vortex stretching mechanism (see, e.g.,\cite{ConDoering,Gill,MB,PG}). On the one hand, the question of whether the 2D completely inviscid Boussinesq equations (namely \eqref{Bouswqa21} without the dissipation term $\Lambda \theta$) can develop a finite-time singularity from general initial data is a challenging open problem. On the other hand, if moderately strong dissipation is added in the equation of $\theta$, then 2D Euler-Boussinesq type equations indeed admit a unique global solution. In fact, if the dissipation term $\Lambda \theta$ in $\eqref{Bouswqa21}_{2}$ is replaced by $-\Delta\theta$, Chae \cite{Chae2} proved the global well-posedness for this case with regular initial data. For the subsequent works with the rough initial data, we refer to \cite{HKbmp1,DP2}. In the case when the dissipation term $\Lambda \theta$ in $\eqref{Bouswqa21}_{2}$ is replaced by $\Lambda^{\beta}\theta$ with $\beta\in (1,2)$, Hmidi and Zerguine \cite{HZ10} showed the global well-posedness of the corresponding system with the rough data. The global well-posedness of \eqref{Bouswqa21} was proven by Hmidi, Keraani and Rousset \cite{HK4} via deeply developing the hidden structures of the coupling system. Recently, the author \cite{Yena} provided an alternative approach to global regularity for \eqref{Bouswqa21} without the use of the hidden structure developed by Hmidi, Keraani and Rousset \cite{HK4}. Actually, the main argument of \cite{Yena} is mainly based on the sharp interpolation inequalities and the differentiability of the drift--diffusion equation. We also refer to \cite{DPqq,Hmdapde,KCRTW,XXue1} for some other interesting results of the Euler-Boussinesq type equations.

\vskip .1in
In terms of (\ref{BEBou}) with $\gamma>0$, it not only has the difficulties that \eqref{Bouswqa21} possesses, but also the external force $u_{2}$ will bring other greater difficulties. Actually, in the case when $\gamma>0$, the global regularity issue of (\ref{BEBou}) becomes much more involved and harder than \eqref{Bouswqa21}. The key reason is that the external force $u_{2}$ in the temperature equation prevents us to get the {\it a priori} $L^{\infty}$-information of $\theta$. In fact, the energy method and classical approaches are no longer effective in obtaining $L^{\infty}$-bound of $\theta$. Indeed, it seems impossible to directly apply the methods of \cite{HK4} and \cite{Yena} to establish the global regularity of (\ref{BEBou}) with $\gamma>0$. Let us now briefly explain that what difficulties the external force $u_{2}$ in $\eqref{BEBou}_{2}$ will bring. As $u$ satisfies a pure transport equation forced by $\theta e_{2}$, the energy method only provides the boundedness information $u\in L_{loc}^{\infty}(\mathbb{R}_{+}; L^{2}(\mathbb{R}^{2}))$. Keeping in mind this bound and applying the delicate analysis on the temperature equation $(\ref{BEBou})_{2}$, we are only able to obtain the following polynomial control of $\theta$ (see \eqref{yydkl028} below for details)
\begin{eqnarray}\label{hkjd2}
 \sup_{r\geq2}\frac{\|\theta(\tau)\|_{L^{r}}}{r} <\infty.
\end{eqnarray}
Unfortunately, with just the above estimate \eqref{hkjd2} and the known arguments, it seems very difficult to further derive any regularity estimates of the solution (see \cite{Yena17,Yeacap18} for details). To explain clearly, we provide two examples to illustrate it. On the one hand, if one follows the approach of \cite{HK4}, then we may derive
 \begin{align}\label{hfdcvv11}&
 \frac{d}{dt}(\|\omega(t)\|_{L^{2}}^{2}+\|\omega(t)\|_{L^{4}}^{4}+\|\nabla\theta(t)\|_{L^{2}}^{2})
+\|\Lambda^{\frac{3}{2}}\theta\|_{L^{2}}^{2}
\nonumber\\&\leq C(1+\|\Lambda^{\frac{1}{2}}\theta\|_{L^{2}}^{2}+\|\theta\|_{B_{\infty,2}^{0}})
(\|\omega\|_{L^{2}}^{2}+
\|\omega\|_{L^{4}}^{4}+
\|\nabla \theta\|_{L^{2}}^{2}),
\end{align}
where $\omega$ denotes the vorticity associated with the corresponding velocity $u$, namely, $\omega\triangleq \nabla^{\perp}\cdot u=\partial_{1}u_{2}-\partial_{2}u_{1}$.
We point out that \eqref{hfdcvv11} can be deduced as follows.
It follows from the proof of \cite[Proposition 5.2]{HK4} that
\begin{align}\label{cvkkjh1}
 \frac{d}{dt}(\|\omega(t)\|_{L^{2}}^{2}+\|\omega(t)\|_{L^{4}}^{4})
 \leq C(1+\|\theta\|_{B_{\infty,2}^{0}})
(\|\omega\|_{L^{2}}^{2}+
\|\omega\|_{L^{4}}^{4}).
\end{align}
Multiplying $(\ref{BEBou})_{2}$ by $-\Delta\theta$ and performing a space integration by parts, we get
\begin{align}
\frac{1}{2}\frac{d}{dt}\|\nabla\theta(t)\|_{L^{2}}^{2}
+\|\Lambda^{\frac{3}{2}}\theta\|_{L^{2}}^{2}&=\int_{\mathbb{R}^{2}}
 \big(u \cdot
\nabla\theta\big)\Delta\theta\,dx-\int_{\mathbb{R}^{2}}
 u_{2}\Delta\theta\,dx\nonumber\\
&=-\int_{\mathbb{R}^{2}}
\partial_{j}u_{i} \partial_{i}\theta \partial_{j}\theta\,dx+\int_{\mathbb{R}^{2}}
\partial_{i} u_{2}\partial_{i}\theta\,dx\nonumber\\
& \leq C\|\nabla u\|_{L^{4}}\|\nabla\theta\|_{L^{\frac{8}{3}}}^{2}+C\|\nabla u\|_{L^{2}}\|\nabla \theta\|_{L^{2}}
\nonumber\\
& \leq C\|\omega\|_{L^{4}}\|\Lambda^{\frac{5}{4}}\theta\|_{L^{2}}^{2}
+C\|\omega\|_{L^{2}}\|\nabla \theta\|_{L^{2}}
\nonumber\\
& \leq C\|\omega\|_{L^{4}}\|\Lambda^{\frac{1}{2}}\theta\|_{L^{2}}^{\frac{1}{2}}
\|\Lambda^{\frac{3}{2}}\theta\|_{L^{2}}^{\frac{3}{2}}+C(\|\omega\|_{L^{2}}^{2}+
\|\nabla \theta\|_{L^{2}}^{2})
\nonumber\\
& \leq
\frac{1}{2}\|\Lambda^{\frac{3}{2}}\theta\|_{L^{2}}^{2}
+C\|\Lambda^{\frac{1}{2}}\theta\|_{L^{2}}^{2}\|\omega\|_{L^{4}}^{4}+C(\|\omega\|_{L^{2}}^{2}+
\|\nabla \theta\|_{L^{2}}^{2})
\nonumber\\
& \leq
\frac{1}{2}\|\Lambda^{\frac{3}{2}}\theta\|_{L^{2}}^{2}
+C(1+\|\Lambda^{\frac{1}{2}}\theta\|_{L^{2}}^{2})(
\|\omega\|_{L^{4}}^{4}+\|\omega\|_{L^{2}}^{2}+
\|\nabla \theta\|_{L^{2}}^{2}),\nonumber
\end{align}
which yields
\begin{align}\label{dfpyt78}
 \frac{d}{dt}\|\nabla\theta(t)\|_{L^{2}}^{2}
+\|\Lambda^{\frac{3}{2}}\theta\|_{L^{2}}^{2}
\leq C(1+\|\Lambda^{\frac{1}{2}}\theta\|_{L^{2}}^{2})(
\|\omega\|_{L^{4}}^{4}+\|\omega\|_{L^{2}}^{2}+
\|\nabla \theta\|_{L^{2}}^{2}).
\end{align}
Therefore, \eqref{hfdcvv11} follows by combining \eqref{cvkkjh1} and \eqref{dfpyt78}. On the other hand, arguing as \cite[(2.10)]{Yena}, we are able to show
 \begin{align}\label{hfdcvv12}&
 \frac{d}{dt}(\|\omega(t)\|_{L^{2}}^{2}+\|\omega(t)\|_{L^{3}}^{3}
 +\|\nabla\theta(t)\|_{L^{2}}^{2})
+\|\Lambda^{\frac{3}{2}}\theta\|_{L^{2}}^{2}
\nonumber\\&\leq C(1+\|\theta\|_{L^{\infty}}^{2})
(\|\omega\|_{L^{2}}^{2}+
\|\omega\|_{L^{3}}^{3}+
\|\nabla \theta\|_{L^{2}}^{2}).
\end{align}
Applying the corresponding logarithmic Sobolev inequalities to bound $\|\theta\|_{B_{\infty,2}^{0}}$ of \eqref{hfdcvv11} and $\|\theta\|_{L^{\infty}}^{2}$ of \eqref{hfdcvv12}, we are still unable to establish the desired differential type inequalities due to the number $2$ in $\|\theta\|_{B_{\infty,2}^{0}}$ and $\|\theta\|_{L^{\infty}}^{2}$.
As a matter of fact, if one expects to close \eqref{hfdcvv11} or \eqref{hfdcvv12}, precisely to derive the desired differential type inequalities, then the following bound ("almost the same level") is strongly required (see the proof of \cite[Theorem 1.3]{Yeacap18})
\begin{eqnarray}\label{hkjd1}
\sup_{r\geq2}\frac{\|\theta(\tau)\|_{L^{r}}}{\sqrt{r}}<\infty.
\end{eqnarray}
In fact, it is very difficult to fulfil the gap between \eqref{hkjd2} and \eqref{hkjd1}. However, if $\gamma=0$, we do not need to face this problem as the $L^{\infty}$-information of $\theta$ is automatically true due to the maximum principle. In the case when the dissipation $\Lambda \theta$ is replaced by $-\Delta\theta$, Danchin and Paicu \cite{DP2} proved that the B$\rm\acute{e}$nard equations (\ref{BEBou}) with $\gamma>0$ (of course valid for $\gamma=0$) have a global unique solution for Yudovich's type data, that is, the initial data has finite energy and bounded vorticity. Moreover, they also showed the global result in the case of infinite energy velocity field which can admit the vortex-patches-like structures. Later, in the case when $\Lambda \theta$ is replaced by $\Lambda^{\beta}\theta$ with $\beta\in (1,2)$ and $\gamma>0$ (also valid for $\gamma=0$), Wu and Xue \cite{WuXue2012} were able to show the global unique solution for the corresponding B$\rm\acute{e}$nard equations with Yudovich's type data. However, completely different from the corresponding critical Boussinesq equations \cite{HK4}, to the best of our knowledge, the global regularity of B$\rm\acute{e}$nard equations (\ref{BEBou}) with $\gamma>0$ is still an unsolved problem, see \cite{WuXue2012,Yena17,Yeacap18} for partial results. Interestingly, it was stated in \cite[Remark 1.3]{WuXue2012} that it is not clear how to show the global regularity of (\ref{BEBou}) with $\gamma>0$. As a matter of fact, this is the main aim of this paper. Based on a new commutator estimate and a refined logarithmic Gronwall inequality as well as the nice combined quantity, we are able to give an affirmative solution to this unsolved critical problem (\ref{BEBou}) with Yudovich's type data. More precisely, we aim at proving the following global well-posedness result.

\begin{thm}\label{Th1}
Let $\gamma>0$, $u_{0}\in L^{2}(\mathbb{R}^{2})$ be a divergence-free vector field, the corresponding initial
vorticity $\omega_{0}\in L^{2}(\mathbb{R}^{2})\cap L^{\infty}(\mathbb{R}^{2})$ and $\mathcal {R}_{1}\theta_{0}\in L^{\infty}(\mathbb{R}^{2})$, $\theta_{0}\in  \mathbb{L}(\mathbb{R}^{2})$.
Then (\ref{BEBou}) generates a unique global solution $( {u},\, \theta)$ such that
$$u\in \mathcal{C}(\mathbb{R}_{+}; H^{1}(\mathbb{R}^{2})),\quad \omega \in L_{loc}^{\infty}(\mathbb{R}_{+}; L^{\infty}(\mathbb{R}^{2})),\quad \theta \in L_{loc}^{\infty}(\mathbb{R}_{+};\mathbb{L}(\mathbb{R}^{2})),$$
$$ \mathcal {R}_{1}\theta \in L_{loc}^{\infty}(\mathbb{R}_{+}; L^{\infty}(\mathbb{R}^{2})),\quad \theta\in \mathcal{C}(\mathbb{R}_{+}; B_{r,1}^{\sigma}(\mathbb{R}^{2}))\cap L_{loc}^{1}(\mathbb{R}_{+}; B_{r,1}^{\sigma+1}(\mathbb{R}^{2})),$$
where $\sigma$ and $r$ satisfy
$$\frac{2}{r}-1<\sigma<0,\quad 2<r<\infty.$$
Here $\mathcal {R}_{1}$ is the classical 2D Riesz operator, namely, $\mathcal {R}_{1}\triangleq \Lambda^{-1}\partial_{x_{1}}$ and the space $\mathbb{L}(\mathbb{R}^{2})$ is defined by
$$\mathbb{L}(\mathbb{R}^{2})=\left\{f\in \mathcal{S'}(\mathbb{R}^{2}):\ \ \sup_{m\geq2}\frac{\|f\|_{L^{m}(\mathbb{R}^{2})}}{m}<\infty\right\}.$$
\end{thm}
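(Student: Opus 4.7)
I would follow an adaptation of the Hmidi--Keraani--Rousset ``good unknown'' scheme to the B\'enard coupling. First, basic $L^{2}$ energy estimates for the pair $(u,\theta)$ are obtained by testing the velocity and temperature equations against $u$ and $\theta$: the two cross terms $\int\theta u_{2}\,dx$ and $\gamma\int u_{2}\theta\,dx$ can be absorbed by a Gr\"onwall argument, giving $u\in L^{\infty}_{loc}(L^{2})$ and $\theta\in L^{\infty}_{loc}(L^{2})\cap L^{2}_{loc}(\dot H^{1/2})$. Next, testing the temperature equation against $|\theta|^{r-2}\theta$ and invoking the C\'ordoba--C\'ordoba pointwise inequality, together with Gagliardo--Nirenberg and the Calder\'on--Zygmund bound $\|u\|_{L^{r}}\lesssim\sqrt{r}\,(\|u\|_{L^{2}}+\|\omega\|_{L^{r}})$, one propagates exactly the quantitative growth \eqref{hkjd2}, i.e.\ $\theta\in L^{\infty}_{loc}(\mathbb{L})$.

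The core of the argument is then the combined unknown $\Gamma:=\omega+\mathcal{R}_{1}\theta$, whose appearance is already foreshadowed by the hypothesis $\mathcal{R}_{1}\theta_{0}\in L^{\infty}$. Applying $\mathcal{R}_{1}$ to the temperature equation and using the identity $\Lambda\mathcal{R}_{1}=\partial_{1}$, so that $\mathcal{R}_{1}\Lambda\theta=\partial_{1}\theta$ cancels the source $\partial_{1}\theta$ in the vorticity equation, one checks that
\[
(\partial_{t}+u\cdot\nabla)\Gamma=-[\mathcal{R}_{1},u\cdot\nabla]\theta+\gamma\,\mathcal{R}_{1}u_{2}.
\]
This is a pure transport equation, so $\|\Gamma(t)\|_{L^{\infty}}$ is controlled by integrating the right-hand side along the flow. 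The inhomogeneity $\gamma\mathcal{R}_{1}u_{2}$ is handled by a log-Lipschitz estimate $\|\mathcal{R}_{1}u\|_{L^{\infty}}\lesssim\|u\|_{L^{2}}+\|\omega\|_{L^{\infty}}\log(e+\|\omega\|_{L^{\infty}})$, while the commutator term is bounded by a \emph{new} inequality of the form
\[
\|[\mathcal{R}_{1},u\cdot\nabla]\theta\|_{L^{\infty}}\lesssim\mathcal{F}\!\left(\|\omega\|_{L^{\infty}},\|u\|_{L^{2}},\sup_{r\geq 2}\tfrac{\|\theta\|_{L^{r}}}{r}\right),
\]
whose essential feature is that it depends on $\theta$ only through the polynomial bound \eqref{hkjd2} and not through the unavailable \eqref{hkjd1}. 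The identity $\omega=\Gamma-\mathcal{R}_{1}\theta$ then bootstraps both $\omega$ and $\mathcal{R}_{1}\theta$ to $L^{\infty}_{loc}(L^{\infty})$.

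The main obstacle lies precisely in closing this bootstrap: as \eqref{hfdcvv11}--\eqref{hfdcvv12} already illustrate, plugging standard paraproduct and Riesz estimates into the equation for $\Gamma$ produces a differential inequality with a logarithm of power strictly greater than what the classical log-Gr\"onwall lemma can digest, matching precisely the gap between \eqref{hkjd2} and \eqref{hkjd1} that blocked the earlier works \cite{WuXue2012,Yena17,Yeacap18}. To overcome it I would look for (i) a sharpened commutator estimate via a refined Bony decomposition of $[\mathcal{R}_{1},u\cdot\nabla]\theta$, trading some regularity of $\theta$ against the $L^{2}_{t}\dot H^{1/2}$ smoothing produced by the critical dissipation, and (ii) a \emph{refined logarithmic Gr\"onwall inequality} designed to absorb the extra logarithmic power exactly when the forcing involves $\sup_{r}\|\theta\|_{L^{r}}/r$ rather than $\|\theta\|_{L^{\infty}}$. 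Once $\omega\in L^{\infty}_{loc}(L^{\infty})$ is secured, the Besov regularity $\theta\in\mathcal{C}(B^{\sigma}_{r,1})\cap L^{1}_{loc}(B^{\sigma+1}_{r,1})$ follows from maximal-regularity transport--diffusion estimates with log-Lipschitz drift, the velocity regularity $u\in\mathcal{C}(H^{1})$ from Biot--Savart, existence by a standard smoothing-and-compactness argument on the data, and uniqueness from an $L^{2}$-type Yudovich estimate on the difference of two solutions, using an Osgood-type modulus of continuity.
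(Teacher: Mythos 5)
Your skeleton is broadly the right one (energy estimates, the polynomial growth \eqref{hkjd2}, the combined unknown $\Gamma=\omega+\mathcal{R}_{1}\theta$, a new commutator estimate plus a refined logarithmic Gronwall lemma, then Besov smoothing, compactness for existence, and a Yudovich argument for uniqueness), but there is a genuine gap at the decisive step. You propose to control $\|\Gamma\|_{L^{\infty}}$ directly along the flow via a commutator estimate of the form $\|[\mathcal{R}_{1},u\cdot\nabla]\theta\|_{L^{\infty}}\lesssim\mathcal{F}(\|\omega\|_{L^{\infty}},\|u\|_{L^{2}},\sup_{r}\|\theta\|_{L^{r}}/r)$, i.e.\ depending on $\theta$ only through \eqref{hkjd2}. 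No such estimate is established (nor plausibly true): every known $L^{\infty}$ or $B^{0}_{\infty,1}$ bound for this commutator requires some positive regularity of $\theta$, e.g.\ $\|[\mathcal{R}_{1},u\cdot\nabla]\theta\|_{B^{0}_{\infty,1}}\lesssim(\|\omega\|_{L^{\infty}}+\|\omega\|_{L^{2}})(\|\theta\|_{B^{\epsilon}_{\infty,1}}+\|\theta\|_{L^{2}})$ with $\epsilon>0$, and $\|\theta\|_{B^{\epsilon}_{\infty,1}}$ is exactly what is \emph{not} controlled by \eqref{hkjd2}. If such an $L^{\infty}$ commutator bound were available the problem would not have remained open. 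The paper's actual resolution is a two-tier argument that you skip: one first closes a \emph{coupled} estimate at the finite-$r$ level for $A(t)=1+\|\theta\|_{B^{\sigma}_{r,1}}+\|\Gamma\|_{L^{2}}+\|\Gamma\|_{L^{r}}$ together with the dissipative gain $B(t)=\|\theta\|_{B^{\sigma+1}_{r,1}}$. The new ingredient here is the commutator estimate \eqref{sdfvh80}, $\|[\mathcal{R}_{1},u\cdot\nabla]\theta\|_{L^{p}}\leq C(p)\|\nabla u\|_{L^{p}}\|\theta\|_{L^{\infty}}$ (proved via the Kato--Ponce/fractional Leibniz refinements with a BMO endpoint, not via the $L^{2}_{t}\dot H^{1/2}$ smoothing), whose crucial feature is that $\|\theta\|_{L^{\infty}}$ enters \emph{linearly}; combined with the logarithmic interpolation \eqref{LOG2}, $\|\theta\|_{L^{\infty}}\lesssim 1+n(t)\ln(e+\|\theta\|_{B^{\sigma+1}_{r,1}})$ with $n(t)=\sup_{m}\|\theta\|_{L^{m}}/m$, this yields $A'+B\leq CA+Cn(t)\ln(e+B)A$, and since $n(t)\lesssim(1+B)^{\beta}$ with $\beta=\frac{1}{\sigma+2-2/r}<1$, the refined logarithmic Gronwall lemma closes it. Only after this step, which delivers $\theta\in L^{1}_{loc}(B^{\sigma+1}_{r,1})\hookrightarrow L^{1}_{loc}(B^{\epsilon}_{\infty,1})$ and $\omega\in L^{\infty}_{loc}(L^{r})$, can one run the transport/maximum-principle argument for $\Gamma$ in $L^{\infty}$ using the Hmidi--Keraani--Rousset commutator bound and an ordinary Gronwall inequality. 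Your diagnosis that the obstruction is the gap between \eqref{hkjd2} and \eqref{hkjd1} is correct, but the fix is the linear-in-$\|\theta\|_{L^{\infty}}$ commutator bound at the $L^{r}$ level, not a stronger $L^{\infty}$ commutator bound.

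Two smaller points. The log-Lipschitz estimate you invoke for $\mathcal{R}_{1}u_{2}$ is unnecessary: $\|\mathcal{R}_{1}u_{2}\|_{L^{\infty}}\lesssim\|u\|_{L^{2}}+\|\nabla u\|_{L^{4}}$ suffices once the finite-$r$ bounds are in hand. For uniqueness, a plain $L^{2}$ energy estimate on $(\delta u,\delta\theta)$ does not work because $\theta_{0}$ carries no regularity beyond $\mathbb{L}\,$; the paper must pass to the auxiliary unknown $\eta$ with $\Lambda\eta=\theta$ and propagate $\|\Lambda^{1/2}\delta\eta\|_{L^{2}}^{2}+\|\delta u\|_{L^{2}}^{2}+\|\Lambda^{-1/2}\delta u\|_{L^{2}}^{2}$, closing with the Yudovich $r\to\infty$ device rather than a generic Osgood modulus. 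Your sketch would need to be upgraded accordingly.
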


\vskip .1in
\begin{rem}
Since $\omega \in L_{loc}^{\infty}(\mathbb{R}_{+}; L^{\infty}(\mathbb{R}^{2}))$, we are able to propagate all the higher regularities, for example $u_{0}\in H^{s}(\mathbb{R}^{2}),\,\theta_{0}\in H^{\widetilde{s}}(\mathbb{R}^{2})$ with $|s-\widetilde{s}|\leq\frac{1}{2}$ and $s>2$ (see Appendix \ref{appSec2} for details). Moreover, we assume $\gamma=1$ for the sake of simplicity. It is worthwhile to stress one point that Theorem \ref{Th1} is valid for any $\gamma \in \mathbb{R}$. Finally, the restriction $\sigma<0$ is only used to ensure $\mathbb{L}(\mathbb{R}^{2})\hookrightarrow B_{r,1}^{\sigma}(\mathbb{R}^{2})$.
\end{rem}

\begin{rem}
For the Euler-Boussinesq equations \eqref{Bouswqa21}, we \cite{Yena} provided an alternative approach to global regularity without the use of the combined quantity $\Gamma$ (see \eqref{fgt504} for details). However, we have no idea how to prove the global regularity for the Euler-Boussinesq-B$\rm\acute{e}$nard equations (\ref{BEBou}) without the use of such combined quantity.
\end{rem}

\begin{rem}
Combining the known results on the 2D hydrodynamic equations and our argument used in the present paper, the same global regularity result is also valid for other 2D hydrodynamic system involving the Euler-B$\rm\acute{e}$nard equations.
For example, the following 2D magnetic Euler-B$\rm\acute{e}$nard equations come from the convective motions in a heated and incompressible fluid, which can be used to model the heat convection phenomenon under the influence of the magnetic field  (see, e.g., \cite{Gpadd,Mulone} for more details)
\begin{equation}\label{dsfpmg6}
\left\{\aligned
&\partial_{t}u+(u\cdot\nabla)u+\nabla p=(b\cdot\nabla) b+\theta e_{2},\qquad x \in \mathbb{R}^{2},\, t>0,\\
&\partial_{t}b+(u\cdot\nabla)b+\Lambda^{2\beta}b=(b\cdot\nabla)u,
\\
&\partial_{t}\theta+(u \cdot \nabla) \theta+\Lambda \theta= u_{2}, \\
&\nabla\cdot u=0,\ \ \ \nabla\cdot b=0,\\
&u(x,0)=u_{0}(x),\,\,b(x,0)=b_{0}(x),\,\,\theta(x,0)=\theta_{0}(x).
\endaligned \right.
\end{equation}
Making use of the arguments adopted in proving Theorem \ref{Th1} and performing some suitable modifications, we are able to show that in the case when $\beta>1$, \eqref{dsfpmg6} admits a unique global smooth solution. However, it should be mentioned here that we are unable to show the global regularity for \eqref{dsfpmg6} when $\Lambda^{2\beta}b$ with $\beta>1$ is replaced by the weaker dissipation $-\Delta [\ln(e-\Delta)]^{\rho}b$ with $\rho\geq0$.  Therefore, it would be interesting to study this case and this is left for the future.
Furthermore, the global regularity result also holds true for the corresponding 2D micropolar Rayleigh-Euler-B$\rm\acute{e}$nard convection problem \cite{Eringen,Lukasz1,Lukasz2}
\begin{equation*}
\left\{\aligned
&\partial_{t}u+(u\cdot\nabla)u+\nabla p=\nabla\times w+\theta e_{2},\qquad x \in \mathbb{R}^{2},\, t>0,\\
&\partial_{t}w+(u\cdot\nabla)w-\Delta w=\nabla^{\perp}\cdot u,
\\
&\partial_{t}\theta+(u \cdot \nabla) \theta+\Lambda \theta= u_{2}, \\
&\nabla\cdot u=0, \\
&u(x,0)=u_{0}(x),\,\,w(x,0)=w_{0}(x),\,\,\theta(x,0)=\theta_{0}(x),
\endaligned \right.
\end{equation*}
where $u$ is the fluid velocity, $w$ is the field of microrotation representing the angular velocity of the
rotation of the particles of the fluid, $p$ is the scalar pressure of the flow, $\theta$ is the scalar temperature and $\nabla\times w=(-\partial_{2}w,\,\partial_{1}w)$.
\end{rem}

\vskip .2in
The rest of this paper unfolds as follows. Section \ref{sectt2} is devoted to providing the definition of Besov spaces and collecting some inequalities which will be needed in later analysis. In Section \ref{sectt011}, we mainly prove Theorem \ref{Th1}, which consists of three steps, namely, the \emph{a priori} estimates, existence and the uniqueness. Finally, in Appendix \ref{appSec2}, we consider the case of the system (\ref{BEBou}) with regular initial data.

\vskip .3in
\section{Preliminaries}\setcounter{equation}{0}\label{sectt2}
This section provides the definition of Besov spaces and
several useful facts. Let us recall briefly the definition of the
Littlewood-Paley decomposition (see \cite{BCD} for more details). More precisely, we take some smooth radial non increasing function $\chi$ with values in $[0, 1]$ such that $\chi\in C_{0}^{\infty}(\mathbb{R}^{2})$ supported in the ball $\mathcal{B}\triangleq\{\xi\in \mathbb{R}^{2}, |\xi|\leq \frac{4}{3}\}$
and with value $1$ on $\{\xi\in \mathbb{R}^{2}, |\xi|\leq \frac{3}{4}\}$, then we define
$\varphi(\xi)\triangleq\chi\big(\frac{\xi}{2}\big)-\chi(\xi)$. Of course, it is not hard to check that
${\varphi\in C_{0}^{\infty}(\mathbb{R}^{2})}$ is supported in the annulus
$\mathcal{C}\triangleq\{\xi\in \mathbb{R}^{2}, \frac{3}{4}\leq |\xi|\leq
\frac{8}{3}\}$ and satisfies
$$\chi(\xi)+\sum_{j\geq0}\varphi(2^{-j}\xi)=1, \quad  \forall \xi\in \mathbb{R}^{2};\qquad \sum_{j\in \mathbb{Z}}\varphi(2^{-j}\xi)=1, \quad  \forall \xi\neq0.$$
Let $h=\check{\varphi}$ and $\widetilde{h}=\check{\chi}$, then
the inhomogeneous dyadic blocks $\Delta_{j}$ of our decomposition by setting
$$\Delta_{j}u=0,\ \ j\leq -2; \ \  \ \ \ \Delta_{-1}u=\chi(D)u=\int_{\mathbb{R}^{2}}{\widetilde{h}(y)u(x-y)\,dy};
$$
$$ \Delta_{j}u=\varphi(2^{-j}D)u=2^{jn}\int_{\mathbb{R}^{2}}{h(2^{j}y)u(x-y)\,dy},\ \ \forall j\geq0.
$$
Moreover, the low-frequency cut-off reads
$$ {S}_{j}u=\chi(2^{-j}D)u=\sum_{k\leq j-1} {\Delta}_{k}u=2^{jn}\int_{\mathbb{R}^{2}}{\widetilde{h}(2^{j}y)u(x-y)\,dy},\ \ \forall j\in \mathbb{N}.$$
Moreover, we have the following so-called Bony decomposition
$$uv=\sum_{j\geq -1} {S}_{j-1}u  {\Delta}_{j}v+ \sum_{j\geq -1} {S}_{j-1}v {\Delta}_{j}u+ \sum_{j\geq -1}{\Delta}_{j}u\widetilde{\Delta}_{j}v,$$
with
$$\widetilde{\Delta}_{j}v={\Delta}_{j-1}v+
 {\Delta}_{j}v+ {\Delta}_{j+1}v.$$

\vskip .1in
It is now time to introduce the inhomogeneous Besov spaces, which are defined by the Littlewood-Paley decomposition.
\begin{define}
Let $s\in \mathbb{R}, (p,r)\in[1,+\infty]^{2}$. The inhomogeneous
Besov space $B_{p,r}^{s}$ is defined as a space of $f\in
S'(\mathbb{R}^{2})$ such that
$$ B_{p,r}^{s}=\{f\in S'(\mathbb{R}^{2});  \|f\|_{B_{p,r}^{s}}<\infty\},$$
where
\begin{equation} \nonumber
 \|f\|_{B_{p,r}^{s}}\triangleq\left\{\aligned
&\Big(\sum_{j\geq-1}2^{jrs}\|\Delta_{j}f\|_{L^{p}}^{r}\Big)^{\frac{1}{r}}, \quad \ r<\infty,\\
&\sup_{j\geq-1}
2^{js}\|\Delta_{j}f\|_{L^{p}}, \quad \ r=\infty.\\
\endaligned\right.
\end{equation}
\end{define}

\vskip .1in
The following one is the Bernstein type inequality (see \cite[Lemma 2.1]{BCD}).
\begin{lemma} \label{vfgty8xc}
Assume $1\leq a\leq b\leq\infty$ and $\mathcal{C}$ be an annulus and $\mathcal{B}$ a ball of $\mathbb{R}^{2}$, then
$${\rm Supp}\widehat{f}\subset \lambda \mathcal{B}\ \Rightarrow\
\|\nabla^{k}f\|_{L^b}\leq C_1\, \lambda^{k  +
2(\frac{1}{a}-\frac{1}{b})} \|f\|_{L^a},
$$
$$  {\rm Supp}\widehat{f}\subset \lambda \mathcal{C}\ \Rightarrow\
C_2\, \lambda^{k} \| f\|_{L^b}\leq \|\nabla^{k} f\|_{L^b}\leq
C_3\, \lambda^{k + 2(\frac{1}{a}-\frac{1}{b})} \|f\|_{L^a},
$$
where $C_1$, $C_2$ and $C_3$ are constants depending on $k,\,a$ and $b$
only.
\end{lemma}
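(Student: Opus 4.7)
The plan is to reduce both the upper and lower bounds to Young's convolution inequality applied to carefully chosen Fourier multipliers. I would first fix an auxiliary function $\phi \in C_{0}^{\infty}(\mathbb{R}^{2})$ with $\phi \equiv 1$ on a neighborhood of $\mathcal{B}$, so the spectral support hypothesis ${\rm Supp}\,\widehat{f}\subset \lambda \mathcal{B}$ gives $\widehat{f}(\xi)=\phi(\xi/\lambda)\widehat{f}(\xi)$. Differentiating under the Fourier transform and rescaling then yields, for every multi-index $\alpha$ with $|\alpha|=k$,
\[
\partial^{\alpha}f(x)=\lambda^{k+2}\,\check{\psi}_{\alpha}(\lambda\,\cdot)\ast f(x),\qquad \psi_{\alpha}(\eta)\triangleq (i\eta)^{\alpha}\phi(\eta)\in C_{0}^{\infty}(\mathbb{R}^{2}),
\]
so that $\check{\psi}_{\alpha}$ is Schwartz and therefore lies in $L^{p}$ for every $p\in[1,\infty]$.

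Next, I would apply Young's convolution inequality $\|g\ast f\|_{L^{b}}\leq \|g\|_{L^{c}}\|f\|_{L^{a}}$ with $c\in[1,\infty]$ determined by $\tfrac{1}{c}=1+\tfrac{1}{b}-\tfrac{1}{a}$; the hypothesis $a\leq b$ is exactly what guarantees $c\geq 1$. Combining the rescaling identity $\|\check{\psi}_{\alpha}(\lambda\,\cdot)\|_{L^{c}}=\lambda^{-2/c}\|\check{\psi}_{\alpha}\|_{L^{c}}$ with the arithmetic $2-2/c=2(\tfrac{1}{a}-\tfrac{1}{b})$ yields
\[
\|\partial^{\alpha}f\|_{L^{b}}\leq C_{1}\,\lambda^{k+2(\frac{1}{a}-\frac{1}{b})}\|f\|_{L^{a}},
\]
and summing over $|\alpha|=k$ gives the first assertion. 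The upper bound in the annulus case follows verbatim after replacing $\phi$ by a cutoff adapted to an enlargement of $\mathcal{C}$ into a ball.

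For the lower bound in the annulus case, the key observation is that ${\rm Supp}\,\widehat{f}\subset \lambda\mathcal{C}$ keeps $|\xi|$ bounded away from zero, so the operator $\nabla^{k}$ can be inverted by a smooth Fourier multiplier. I would pick $\widetilde{\phi}\in C_{0}^{\infty}(\mathbb{R}^{2}\setminus\{0\})$ with $\widetilde{\phi}\equiv 1$ on $\mathcal{C}$ and use the elementary algebraic identity $|\xi|^{2k}=\sum_{|\alpha|=k}\binom{k}{\alpha}(\xi^{\alpha})^{2}$ to rewrite
\[
\widehat{f}(\xi) = \lambda^{-k}(-i)^{k}\sum_{|\alpha|=k} \rho_{\alpha}(\xi/\lambda)\,\widehat{\partial^{\alpha}f}(\xi),\qquad \rho_{\alpha}(\eta)\triangleq\binom{k}{\alpha}\widetilde\phi(\eta)\,\eta^{\alpha}|\eta|^{-2k},
\]
where each $\rho_{\alpha}\in C_{0}^{\infty}(\mathbb{R}^{2})$ precisely because $\widetilde\phi$ is supported away from the origin, so $\check{\rho}_{\alpha}\in L^{1}(\mathbb{R}^{2})$. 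Taking inverse Fourier transform and applying Young's inequality with $c=1$ yields $\|f\|_{L^{b}}\leq C\lambda^{-k}\|\nabla^{k}f\|_{L^{b}}$, which is the required bound $C_{2}\lambda^{k}\|f\|_{L^{b}}\leq \|\nabla^{k}f\|_{L^{b}}$.

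The main obstacle is the lower-bound step: one must exhibit an explicit algebraic inversion that expresses the constant $1$ as a combination of the monomial symbols $\xi^{\alpha}$ with $|\alpha|=k$ and coefficients that remain smooth on the support of the cutoff. The identity $|\xi|^{2k}=\sum_{|\alpha|=k}\binom{k}{\alpha}(\xi^{\alpha})^{2}$ accomplishes exactly this after one divides by $|\xi|^{2k}$, a step that is safe only because the support avoids the origin. The remainder of the proof is a mechanical combination of Young's inequality with convolution rescaling, and every constant produced depends only on $k$, $a$, $b$ and the fixed cutoffs $\phi,\widetilde{\phi}$ (which are themselves determined once $\mathcal{B},\mathcal{C}$ are fixed), never on $\lambda$.
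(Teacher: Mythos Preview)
Your argument is correct and is precisely the classical proof of the Bernstein inequalities via smooth Fourier multipliers and Young's convolution inequality. Note, however, that the paper does not give its own proof of this lemma: it simply quotes the statement and refers to \cite[Lemma~2.1]{BCD}. The proof you wrote is essentially the one found in that reference, so there is nothing to compare beyond observing that your write-up reproduces the standard argument the paper is citing.
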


\vskip .1in

To prove our theorem, we need the following commutator estimate involving the operator $\mathcal {R}_{1}$.
\begin{lemma}\label{commutatorq1}
Let $1<p<\infty$, then it holds
\begin{align}\label{sdfvh80}
 \|[\mathcal {R}_{1},u\cdot\nabla]\theta\|_{L^{p}}\leq C(p)\|\nabla u\|_{L^{p}}\|\theta\|_{L^{\infty}},
\end{align}
where $[A, B]\triangleq AB-BA$ denotes the commutator operator.
\end{lemma}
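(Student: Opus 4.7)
The plan is to reduce the problem to a classical Calder\'on-type first commutator by exploiting that $\mathcal{R}_1=\Lambda^{-1}\partial_{x_1}$ is a Fourier multiplier and hence commutes with $\partial_j$. This yields the algebraic simplification
$$[\mathcal{R}_1,u\cdot\nabla]\theta=[\mathcal{R}_1,u_j]\partial_j\theta,$$
and, in combination with $\nabla\cdot u=0$, the equivalent form $\partial_j[\mathcal{R}_1,u_j]\theta$. Either form exhibits the commutator as a Calder\'on first commutator, for which the boundedness on $L^p$ with the asymmetric right-hand side $\|\nabla u\|_{L^p}\|\theta\|_{L^\infty}$ is the natural statement.

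The next step is to apply Bony's paradifferential decomposition to the products $u_j\partial_j\theta$ and $u_j\partial_j\mathcal{R}_1\theta$. Subtracting and using $\mathcal{R}_1\partial_j=\partial_j\mathcal{R}_1$, the commutator splits as $I_1+I_2+I_3$ with
\begin{align*}
I_1&=[\mathcal{R}_1,T_{u_j}]\partial_j\theta,\\
I_2&=\mathcal{R}_1T_{\partial_j\theta}u_j-T_{\mathcal{R}_1\partial_j\theta}u_j,\\
I_3&=\mathcal{R}_1R(u_j,\partial_j\theta)-R(u_j,\mathcal{R}_1\partial_j\theta).
\end{align*}
For $I_1=\sum_k[\mathcal{R}_1,S_{k-1}u_j]\Delta_k\partial_j\theta$, each summand is frequency localized in an annulus of scale $2^k$; a first-order Taylor expansion of the symbol $i\xi_1/|\xi|$ realizes the commutator as convolution with a rescaled Schwartz kernel and gains one derivative. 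Combining $\|\nabla S_{k-1}u_j\|_{L^p}\lesssim\|\nabla u\|_{L^p}$, $\|\Delta_k\theta\|_{L^\infty}\lesssim\|\theta\|_{L^\infty}$, Bernstein's inequality, and the Littlewood--Paley square-function orthogonality (valid for $1<p<\infty$) yields $\|I_1\|_{L^p}\lesssim\|\nabla u\|_{L^p}\|\theta\|_{L^\infty}$. The piece $I_2$ is rewritten, via the elementary identity $\mathcal{R}_1(ab)-(\mathcal{R}_1a)b=[\mathcal{R}_1,b]a$, as $\sum_k[\mathcal{R}_1,\Delta_ku_j]S_{k-1}\partial_j\theta$; then $\|\Delta_ku_j\|_{L^p}\lesssim 2^{-k}\|\nabla u\|_{L^p}$ and $\|S_{k-1}\partial_j\theta\|_{L^\infty}\lesssim 2^k\|\theta\|_{L^\infty}$, so the two opposite dyadic factors cancel and dyadic summation closes the bound. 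The remainder $I_3$, a high-high interaction, is handled by writing it as $\sum_k\{\mathcal{R}_1(\Delta_ku_j\widetilde{\Delta}_k\partial_j\theta)-\Delta_ku_j\widetilde{\Delta}_k\mathcal{R}_1\partial_j\theta\}$ and redistributing $\partial_j$ onto $\Delta_ku_j$ via Bernstein before summing.

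The principal obstacle is the strict asymmetry of the admissible norms: only $\|\nabla u\|_{L^p}$ and $\|\theta\|_{L^\infty}$ are available, with no slack to interchange exponents or to produce $\|\nabla u\|_{L^\infty}$ or $\|\theta\|_{L^p}$. The Bony splitting is exactly what enforces this allocation, because in each paraproduct piece the low-frequency factor is measured in $L^p$ and the high-frequency factor in $L^\infty$, while Bernstein inequalities convert dyadic scales into derivatives that fall only on $u$. A secondary subtlety is the exclusion of the endpoints $p=1,\infty$, which enters both through the $L^p$-boundedness of $\mathcal{R}_1$ and through the Littlewood--Paley summation of the dyadic pieces.
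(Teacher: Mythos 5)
Your overall architecture (reduce to $[\mathcal{R}_1,u_j]\partial_j\theta$, then Bony-decompose into $I_1+I_2+I_3$) is the natural ``from scratch'' route, and it differs from the paper, which instead writes the commutator with $A_j=\mathcal{R}_1\partial_j$, subtracts off \emph{both} endpoint terms to form the second-order quantity $A_j(u_j\theta)-u_jA_j\theta-\theta A_ju_j$, bounds the leftover $\theta A_ju_j$ directly by Calder\'on--Zygmund, and quotes D.~Li's endpoint fractional Leibniz estimate (Corollary 1.4/5.4 of \cite{Lirmi19} with $s=s_1=1$, $s_2=0$) to control the second-order piece by $\|\Lambda u\|_{L^p}\|\theta\|_{\rm BMO}$. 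That citation is doing real work, and your sketch does not replace it.

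The concrete gap is in $I_1$ (and, as written, also in your summation claims for $I_2$ and $I_3$). In every one of your three pieces the termwise bound you produce is of size $O(1)$ in $k$: e.g.\ for $I_1=\sum_k[\mathcal{R}_1,S_{k-1}u_j]\Delta_k\partial_j\theta$ the kernel/Taylor argument gives
\begin{equation*}
\|[\mathcal{R}_1,S_{k-1}u_j]\Delta_k\partial_j\theta\|_{L^p}\lesssim 2^{-k}\,\|\nabla u\|_{L^p}\cdot 2^{k}\|\Delta_k\theta\|_{L^\infty}\lesssim \|\nabla u\|_{L^p}\,\|\theta\|_{L^\infty},
\end{equation*}
with no geometric decay, so the triangle inequality gives a divergent series (or at best $\|\theta\|_{B^0_{\infty,1}}$). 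Invoking square-function orthogonality does not fix $I_1$: the pointwise bound one actually obtains is $|f_k(x)|\lesssim M(\nabla u)(x)\,\|\Delta_k\theta\|_{L^\infty}$, so the $\ell^2_k$ sum lands on the $\theta$-factor and yields $\|\nabla u\|_{L^p}\|\theta\|_{B^0_{\infty,2}}$ --- and, as the paper itself stresses right after \eqref{fngkp897}, $L^\infty$ and $B^0_{\infty,2}$ do not contain each other, and a $B^0_{\infty,2}$-type bound is precisely what fails to close the global argument. Bounding this low-high piece by $\|\nabla u\|_{L^p}\|\theta\|_{L^\infty}$ is equivalent to the $L^p$-boundedness of the paraproduct $g\mapsto\sum_k S_{k-1}g\,\Delta_k\theta$ with operator norm $\lesssim\|\theta\|_{\rm BMO}$, which is a Carleson-measure/$T1$-type theorem, not a consequence of Bernstein plus Littlewood--Paley orthogonality. (Your $I_2$ can be rescued by putting the square function on the $u$-side together with the Fefferman--Stein vector-valued maximal inequality, and $I_3$ by the divergence-free redistribution plus the remainder estimate at positive total regularity; but the mechanisms you state --- ``opposite dyadic factors cancel'' and termwise Bernstein --- are not by themselves sufficient.) To complete the proof you must either import the BMO paraproduct/Carleson estimate for $I_1$, or follow the paper and subtract the additional term $\theta A_ju_j$ so that the known second-order commutator theorem applies.
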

\begin{proof}
We rewrite the commutator as
\begin{align}\label{sdfvh81}
[\mathcal {R}_{1},u\cdot\nabla]\theta
=\mathcal {R}_{1}\partial_{j}(u_{j}\theta)
-u_{j}\mathcal {R}_{1}\partial_{j}\theta-\mathcal {R}_{1}(\nabla\cdot u\, \theta),
\end{align}
where here and in what follows we use the Einstein summation convention. We point out that the third term at the right hand side of \eqref{sdfvh81} vanishes when $u$ is a divergence free vector field.
Obviously, one has
\begin{align}\label{sdfvh82}
 \|\mathcal {R}_{1}(\nabla\cdot u\, \theta)\|_{L^{p}}\leq C(p)\| \nabla\cdot u \,\theta\|_{L^{p}}\leq C(p)\|\nabla u\|_{L^{p}}\|\theta\|_{L^{\infty}},
\end{align}
where here and in what follows we frequently use the fact
$$\|\mathcal {R}_{1}f\|_{L^{p}}\leq C\frac{p^{2}}{p-1}\|f\|_{L^{p}}.$$
Denoting $A_{j}=\mathcal {R}_{1}\partial_{j}$ for $j=1,\,2$, we thus have
\begin{align}
\mathcal {R}_{1}\partial_{j}(u_{j}\theta)
-u_{j}\mathcal {R}_{1}\partial_{j}\theta=&A_{j}(u_{j}\theta)
-u_{j}A_{j}\theta\nonumber\\
=&\underbrace{A_{j}(u_{j}\theta)
-u_{j}A_{j}\theta-\theta
A_{j}u_{j}}_{N_{1}} +\underbrace{\theta
A_{j}u_{j}}_{N_{2}}.\nonumber
\end{align}
Of course, it implies
\begin{align} \label{sdfvh83}
 \|N_{2} \|_{L^{p}}&\leq C\|A_{j}u_{j}\|_{L^{p}}\|\theta\|_{L^{\infty}}
\nonumber\\&\leq
C(p)\|\nabla u\|_{L^{p}}\|\theta\|_{L^{\infty}}.
\end{align}
Choosing $s=s_{1}=1,\,s_{2}=0$ in (1.10) of \cite[Corollary 1.4]{Lirmi19} or \cite[Corollary 5.4]{Lirmi19}, we are able to derive
\begin{align}\label{sdfvh84}
 \|N_{1} \|_{L^{p}} &\leq
C(p)\|\Lambda u\|_{L^{p}}\|\theta\|_{{\rm{BMO}}}\nonumber\\&\leq
C(p)\|\nabla u\|_{L^{p}}\|\theta\|_{L^{\infty}},
\end{align}
where $\rm{BMO}$ denotes the bounded mean oscillation space.
Combining \eqref{sdfvh83} and \eqref{sdfvh84}, we get
\begin{align}
 \|\mathcal {R}_{1}\partial_{j}(u_{j}\theta)
-u_{j}\mathcal {R}_{1}\partial_{j}\theta\|_{L^{p}}\leq
C(p)\|\nabla u\|_{L^{p}}\|\theta\|_{L^{\infty}},\nonumber
\end{align}
which together with \eqref{sdfvh81} and \eqref{sdfvh82} yields
\begin{align}
 \|[\mathcal {R}_{1},u\cdot\nabla]\theta\|_{L^{p}}\leq C(p)\|\nabla u\|_{L^{p}}\|\theta\|_{L^{\infty}}.\nonumber
\end{align}
Consequently, we complete the proof of Lemma \ref{commutatorq1}.
\end{proof}

\vskip .1in
Next, we recall the following lemma concerning the so called Kato-Ponce commutator estimate, which can be found in many references, e.g., \cite{TKG,Kenig,Lirmi19}.
\begin{lemma}
Let $p, p_1, p_3\in (1, \infty)$ and $p_2, p_4\in [1,\infty]$ satisfy
$$
\frac1p =\frac1{p_1} + \frac1{p_2} = \frac1{p_3} + \frac1{p_4}.
$$
Then for $s>0$, there exists a positive constant $C$ such that
\begin{align}\label{cmoi78ye1}
\|[\Lambda^s, f]g\|_{L^p} \le C \left(\|\Lambda^{s-1} g\|_{L^{p_1}}\,\|\nabla f\|_{L^{p_2}}+\|\Lambda^s f\|_{L^{p_3}}\, \|g\|_{L^{p_4}} \right).
\end{align}
In particular, it yields
\begin{align}\label{cmoi78ye2r}
\|\Lambda^s(fg)\|_{L^p} \le C \left(\|\Lambda^{s} g\|_{L^{p_1}}\,\| f\|_{L^{p_2}}+\|\Lambda^s f\|_{L^{p_3}}\, \|g\|_{L^{p_4}} \right).
\end{align}
\end{lemma}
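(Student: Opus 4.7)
The plan is to prove the commutator bound \eqref{cmoi78ye1} via Bony's paraproduct decomposition (which is set up in Section~\ref{sectt2}) and then derive \eqref{cmoi78ye2r} as a corollary. Writing $fg$ via Bony and doing the same for $f\,\Lambda^s g$, the commutator decomposes as
$$
[\Lambda^s,f]g = [\Lambda^s,T_f]g \;+\; \bigl(\Lambda^s T_g f - T_{\Lambda^s g}f\bigr) \;+\; \bigl(\Lambda^s R(f,g) - R(f,\Lambda^s g)\bigr),
$$
where $T$ is the paraproduct and $R$ the remainder. Each piece is handled by frequency-localized estimates, after which one reassembles using the Littlewood--Paley characterization of $L^p$ for $1<p<\infty$.

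For the paraproduct commutator $[\Lambda^s,T_f]g=\sum_j[\Lambda^s,S_{j-1}f]\Delta_j g$, each summand has Fourier support in a dyadic annulus of size $\sim 2^j$. Writing the commutator via its convolution kernel
$$
[\Lambda^s,S_{j-1}f]\Delta_j g(x) = \int K_j(x-y)\bigl(S_{j-1}f(x)-S_{j-1}f(y)\bigr)\Delta_j g(y)\,dy,
$$
where $K_j$ is the kernel of $\Lambda^s\varphi(2^{-j}D)$ and satisfies $\int |y||K_j(y)|\,dy\le C\,2^{j(s-1)}$, a mean-value argument applied to $S_{j-1}f$ gives
$$
\|[\Lambda^s,S_{j-1}f]\Delta_j g\|_{L^p}\le C\,2^{j(s-1)}\|\nabla f\|_{L^{p_2}}\|\Delta_j g\|_{L^{p_1}}.
$$
Because the summands are almost orthogonal in frequency, square-function/Littlewood--Paley estimates and H\"older with $1/p=1/p_1+1/p_2$ yield the bound $C\|\nabla f\|_{L^{p_2}}\|\Lambda^{s-1}g\|_{L^{p_1}}$. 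For the second bracket $\Lambda^s T_g f - T_{\Lambda^s g}f = \sum_j[\Lambda^s(S_{j-1}g\,\Delta_j f)-S_{j-1}(\Lambda^s g)\Delta_j f]$, the high-frequency factor is $\Delta_j f$, so $\Lambda^s$ costs $2^{js}$ on $\Delta_j f$; after analogous frequency bookkeeping this is controlled by $\|\Lambda^s f\|_{L^{p_3}}\|g\|_{L^{p_4}}$ (using $1/p=1/p_3+1/p_4$). The remainder $\Lambda^s R(f,g)-R(f,\Lambda^s g)$ is handled by the same device as (ii), noting that in $R(f,g)$ the derivative can be made to fall on either factor up to a harmless loss.

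The product estimate \eqref{cmoi78ye2r} then follows either by redoing the three-piece paraproduct analysis directly (the term $\Lambda^s T_f g$ gives $\|f\|_{L^{p_2}}\|\Lambda^s g\|_{L^{p_1}}$, the term $\Lambda^s T_g f$ gives $\|\Lambda^s f\|_{L^{p_3}}\|g\|_{L^{p_4}}$, and the remainder $\Lambda^s R(f,g)$ is split as in step (iii)), or by combining \eqref{cmoi78ye1} with the elementary H\"older bound $\|f\,\Lambda^s g\|_{L^p}\le\|f\|_{L^{p_2}}\|\Lambda^s g\|_{L^{p_1}}$ together with standard embedding/interpolation to absorb the $\|\Lambda^{s-1}g\|_{L^{p_1}}\|\nabla f\|_{L^{p_2}}$ term.

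The main obstacle is the kernel estimate for $[\Lambda^s,S_{j-1}f]\Delta_j g$: one must verify that the smooth-symbol truncation of $|\xi|^s$ to the annulus $\{|\xi|\sim 2^j\}$ has a convolution kernel with first-moment bound $O(2^{j(s-1)})$, so that the mean-value trick genuinely trades a derivative on $f$ for a factor $2^{-j}$. This is where the classical Coifman--Meyer symbol calculus is really needed; since the lemma is standard, in the paper itself one simply cites \cite{TKG,Kenig,Lirmi19} rather than reproducing the argument, but the decomposition above is the blueprint the references follow.
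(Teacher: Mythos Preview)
The paper does not prove this lemma at all: it is stated as a recalled fact with the sentence ``which can be found in many references, e.g., \cite{TKG,Kenig,Lirmi19}'' and no argument is given. You anticipated this correctly in your final paragraph. Your paraproduct sketch is a faithful outline of the standard route (essentially the Coifman--Meyer/Kato--Ponce argument carried out in the cited references), so there is nothing to compare on the level of strategy.

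One small point worth flagging: your second suggested derivation of \eqref{cmoi78ye2r} from \eqref{cmoi78ye1}, via $\Lambda^s(fg)=[\Lambda^s,f]g+f\Lambda^s g$ plus ``standard embedding/interpolation to absorb the $\|\Lambda^{s-1}g\|_{L^{p_1}}\|\nabla f\|_{L^{p_2}}$ term'', does not actually close as written, since that cross term is not in general controlled by $\|\Lambda^{s}g\|_{L^{p_1}}\|f\|_{L^{p_2}}+\|\Lambda^{s}f\|_{L^{p_3}}\|g\|_{L^{p_4}}$ without further assumptions. Your first option---redoing the paraproduct analysis directly for the product---is the correct way to get \eqref{cmoi78ye2r}, and that part of your sketch is fine.
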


\vskip .1in
Finally, the following refined logarithmic Gronwall inequality plays an important role in proving our main result (see \cite{CLTPD,Yeacap18}).
\begin{lemma}\label{Lem01khj}
Assume that $l(t),\,m(t),\,n(t)$ and $f(t)$ are all nonnegative and integrable functions on $(0, T)$.
Let $A(t)$ and $B(t)$ be two
absolutely continuous and nonnegative functions on $(0, T)$ for any given $T>0$, satisfying for any $t\in (0, T)$
\begin{equation}
A'(t)+B(t)\leq \Big[l(t)+m(t)\ln(A+e)+n(t) \ln(A+B+e)\Big](A+e)+f(t).\nonumber
\end{equation}
If there are three constants $K\in[0,\,\infty)$, $\alpha\in[0,\,\infty)$ and $\beta\in [0,\,1)$ such that for any $t\in (0, T)$
\begin{equation}\label{ympery8}
n(t)\leq K\big(1+A(t)\big)^{\alpha}\big(1+A(t)+B(t)\big)^{\beta},
\end{equation}
then it holds true
\begin{equation}
A(t)+\int_{0}^{t}{B(s)\,ds}\leq C<\infty.\nonumber
\end{equation}
\end{lemma}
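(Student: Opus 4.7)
The goal is to turn the hypothesis into a closed logarithmic Gronwall inequality by absorbing the $B$-dependence hidden inside $\ln(A+B+e)$; the subcritical exponent $\beta<1$ is what makes the absorption possible.

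The first step is to split the logarithm. Writing
\[
\ln(A+B+e)=\ln(A+e)+\ln\!\bigl(1+\tfrac{B}{A+e}\bigr)
\]
and using the elementary inequality $\ln(1+x)\le x^{\eta}/\eta$, valid for all $x\ge 0$ and all $\eta\in(0,1]$, I would obtain
\[
(A+e)\ln(A+B+e)\le (A+e)\ln(A+e)+\tfrac{1}{\eta}(A+e)^{1-\eta}B^{\eta}.
\]
Substituted back, the hypothesis turns into
\[
A'(t)+B(t)\le \bigl[l(t)+(m(t)+n(t))\ln(A+e)\bigr](A+e)+\tfrac{n(t)}{\eta}(A+e)^{1-\eta}B^{\eta}+f(t).
\]

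The central step is to absorb the mixed term on the right. A weighted Young inequality with conjugate exponents $1/(1-\eta)$ and $1/\eta$ and a small weight on the $B$-side yields
\[
\tfrac{n}{\eta}(A+e)^{1-\eta}B^{\eta}\le \tfrac{1}{2}B+C(\eta)\,n^{1/(1-\eta)}(A+e).
\]
Here \eqref{ympery8} enters: I would pick $\eta\in(0,1-\beta)$ so that $\beta/(1-\eta)<1$. From the constraint,
\[
n^{1/(1-\eta)}\le K^{1/(1-\eta)}(1+A)^{\alpha/(1-\eta)}(1+A+B)^{\beta/(1-\eta)},
\]
and a second weighted Young (valid because $\beta/(1-\eta)<1$) bounds this by $\delta(1+A+B)+C_{\delta}(1+A)^{M}$ for some exponent $M=M(\alpha,\beta,\eta)$. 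Choosing $\delta$ small enough so that $\delta\,(A+e)\,B$ is in turn absorbed into the remaining $\tfrac{1}{2}B$ on the left gives
\[
A'(t)+\tfrac{1}{4}B(t)\le \bigl[l+(m+n)\ln(A+e)\bigr](A+e)+C\bigl(1+A\bigr)^{M+1}+f,
\]
a differential inequality involving only $A$ and absolutely integrable forcing.

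Dividing by $A+e$ produces a differential inequality for $\Phi(t):=\ln(A(t)+e)$ of the form $\Phi'(t)\le l+(m+n)\Phi+C(1+A)^{M}+f/(A+e)$, and applying an Osgood/Bihari-type argument (with $\beta<1$ providing the critical integrability needed to close it) bounds $\Phi$ uniformly on $[0,T]$, hence $A$. Once $A$ is bounded, integrating the original inequality in $t$ immediately yields the desired bound on $\int_0^t B(s)\,ds$. I expect the hardest part to be the weighted Young step in the middle paragraph: one must choose $\eta\in(0,1-\beta)$ and the Young weight jointly so that both the $B$-coefficient is absorbable and the resulting $(1+A)$-dependent term remains compatible with the final log-Gronwall closure, and the strict hypothesis $\beta<1$ is essential precisely because it guarantees the existence of such an admissible $\eta>0$ in the first place.
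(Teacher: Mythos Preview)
The paper does not prove this lemma; it is quoted from \cite{CLTPD,Yeacap18} with no argument given, so there is no in-paper proof to compare against. Your outline, however, contains a genuine gap that must be addressed regardless of what those references do.

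The problem sits in your absorption step. After the second weighted Young inequality you bound $n^{1/(1-\eta)}$ by $\delta(1+A+B)+C_\delta(1+A)^{M}$ and then claim that the resulting contribution $\delta(A+e)B$ can be absorbed into the $\tfrac{1}{2}B$ remaining on the left. But that would require $\delta(A+e)\le \tfrac{1}{2}$, i.e.\ an \emph{a priori} bound on $A$, which is exactly what you are trying to prove; the argument is circular here. One can partially repair this by applying Young to the full product $(1+A)^{\alpha/(1-\eta)}(A+e)\,(1+A+B)^{\beta/(1-\eta)}$ so that the factor $(A+e)$ is pushed into the $A$-only remainder, but this exposes a more serious obstruction. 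After absorption you are left with
\[
A'(t)+\tfrac{1}{4}B(t)\le \bigl[l+(m+n)\ln(A+e)\bigr](A+e)+C(1+A)^{N}+f,
\]
and a short computation shows $N\ge (1-\eta)/(1-\eta-\beta)>1$ whenever $\beta>0$, even in the favourable case $\alpha=0$. Passing to $\Phi=\ln(A+e)$ then gives $\Phi'\le (\text{integrable})\,\Phi+Ce^{(N-1)\Phi}+\cdots$, and no Osgood or Bihari argument closes this: the modulus $\omega(\Phi)=e^{(N-1)\Phi}$ satisfies $\int^{\infty}\!d\Phi/\omega(\Phi)<\infty$, so blow-up is not excluded. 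Your final sentence anticipates a tension here but does not resolve it; the condition $\beta<1$ lets you absorb the $B$-dependence, yet it does nothing about the superlinear $A$-remainder produced in the process. The proofs in the cited references must treat the coupling between the integrability of $n$ and the pointwise bound \eqref{ympery8} in a way that avoids generating such a polynomial-in-$A$ term, and you will need to consult them for that mechanism.
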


\vskip .3in
\section{The proof of Theorem \ref{Th1}}\setcounter{equation}{0}\label{sectt011}
In this section, we give the proof of Theorem \ref{Th1}. Before proving our main result, we point out that the letter $C$ denotes a harmless positive constant, the meaning of which is clear from the context. To emphasize the dependence of a constant on some certain quantities $\alpha,\beta,\cdot\cdot\cdot$, we write
$C(\alpha,\beta,\cdot\cdot\cdot)$.

\vskip .1in
The proof of Theorem \ref{Th1} is divided into three subsections. The first subsection is devoted to showing some key \emph{a priori} estimates. In second subsection, based on the obtained \emph{a priori} estimates, we aim at proving the existence part. The final subsection concerns the uniqueness result.

\subsection{\emph{A priori} estimates}
Let us begin with the following energy estimate.
\begin{Pros} \label{Gsds1}
Assume $(u_{0},\,\theta_{0})$ satisfies the assumptions stated in
Theorem \ref{Th1}, then the corresponding solution $(u,\theta)$ obeys
\begin{eqnarray}\label{yydkl027}
\|u(t)\|_{L^{2}}^{2}+\|\theta(t)\|_{L^{2}}^{2}+\int_{0}^{t}{
\|\Lambda^{\frac{1}{2}}\theta(\tau)\|_{L^{2}}^{2}\,d\tau}\leq
C(t,\,u_{0},\,\theta_{0}),
\end{eqnarray}
\begin{eqnarray}\label{yydkl028}
\sup_{r\geq2}\frac{\|\theta(t)\|_{L^{r}}}{r}\leq C(t, {u}_0,\theta_0).
\end{eqnarray}
\end{Pros}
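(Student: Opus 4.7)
The proof of Proposition \ref{Gsds1} decomposes naturally into two parts: \eqref{yydkl027} is a standard coupled energy estimate, while \eqref{yydkl028} requires careful tracking of $r$-dependence in a C\'ordoba-C\'ordoba/Sobolev/Young chain.

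For \eqref{yydkl027}, I test $\eqref{BEBou}_{1}$ against $u$ and $\eqref{BEBou}_{2}$ against $\theta$. The convective nonlinearities and the pressure gradient vanish upon integration since $\nabla\cdot u=0$. Adding the two identities and bounding $|\int \theta u_{2}\,dx|\leq \tfrac{1}{2}(\|u\|_{L^{2}}^{2}+\|\theta\|_{L^{2}}^{2})$ yields
\[
\frac{d}{dt}\bigl(\|u\|_{L^{2}}^{2}+\|\theta\|_{L^{2}}^{2}\bigr)+2\|\Lambda^{\frac{1}{2}}\theta\|_{L^{2}}^{2}\leq (1+\gamma)\bigl(\|u\|_{L^{2}}^{2}+\|\theta\|_{L^{2}}^{2}\bigr),
\]
from which Gronwall's inequality produces \eqref{yydkl027}.

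For \eqref{yydkl028}, multiply $\eqref{BEBou}_{2}$ by $|\theta|^{r-2}\theta$ and integrate over $\mathbb{R}^{2}$. The transport term vanishes by incompressibility, while the C\'ordoba-C\'ordoba inequality gives
\[
\int_{\mathbb{R}^2} |\theta|^{r-2}\theta\,\Lambda\theta\,dx\geq \frac{2(r-1)}{r^{2}}\|\Lambda^{\frac{1}{2}}(|\theta|^{r/2})\|_{L^{2}}^{2}.
\]
For the source $\gamma\int u_{2}|\theta|^{r-1}\,dx$, I extract $\|u\|_{L^{2}}$ via Cauchy-Schwarz (finite by \eqref{yydkl027}), interpolate $\|\theta\|_{L^{2(r-1)}}\leq \|\theta\|_{L^{r}}^{1/(r-1)}\|\theta\|_{L^{2r}}^{(r-2)/(r-1)}$, and convert $\|\theta\|_{L^{2r}}^{r/2}=\||\theta|^{r/2}\|_{L^{4}}$ into $\|\Lambda^{\frac{1}{2}}(|\theta|^{r/2})\|_{L^{2}}$ using the 2D Sobolev embedding $\dot H^{\frac{1}{2}}(\mathbb{R}^{2})\hookrightarrow L^{4}(\mathbb{R}^{2})$. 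An $\varepsilon$-Young inequality with conjugate exponents $(r/(r-2),\,r/2)$, choosing $\varepsilon\sim (r-1)/r^{2}$, absorbs the dissipative square into the left-hand side and leaves
\[
\frac{d}{dt}\|\theta\|_{L^{r}}^{r}\leq (Cr)^{r/2}\,\|u\|_{L^{2}}^{r/2}\,\|\theta\|_{L^{r}}^{r/2},
\]
with $C$ independent of $r$.

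Setting $\phi(t):=\|\theta(t)\|_{L^{r}}^{r/2}$ rewrites this as $\phi'(t)\leq \tfrac{1}{2}(Cr\|u(t)\|_{L^{2}})^{r/2}$. Direct integration together with \eqref{yydkl027} produces $\|\theta(t)\|_{L^{r}}^{r/2}\leq \|\theta_{0}\|_{L^{r}}^{r/2}+ \tfrac{t}{2}(Cr\|u\|_{L^{\infty}_{t}L^{2}})^{r/2}$. Taking the $(2/r)$-th power, using $(a+b)^{2/r}\leq a^{2/r}+b^{2/r}$ for $r\geq 2$, together with $\|\theta_{0}\|_{L^{r}}\leq r\|\theta_{0}\|_{\mathbb{L}}$ (from the definition of the space $\mathbb{L}$), yields $\|\theta(t)\|_{L^{r}}\leq C(t)\,r$ uniformly in $r\geq 2$, which is precisely \eqref{yydkl028}. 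The main technical challenge is ensuring that every $r$-dependent factor (from C\'ordoba-C\'ordoba, the interpolation, and Young's inequality) combines into exactly $(Cr)^{r/2}$ in the differential inequality, so that the final $(2/r)$-th power step leaves only a linear factor of $r$.
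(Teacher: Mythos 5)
Your proposal is correct and follows essentially the same route as the paper: the $L^2$ energy estimate with Gronwall for \eqref{yydkl027}, and for \eqref{yydkl028} the $L^r$ estimate via the C\'ordoba--C\'ordoba positivity bound combined with $\dot H^{\frac12}(\mathbb{R}^2)\hookrightarrow L^4(\mathbb{R}^2)$, the interpolation $\|\theta\|_{L^{2(r-1)}}^{r-1}\leq\|\theta\|_{L^r}\|\theta\|_{L^{2r}}^{r-2}$, an $\varepsilon$-Young absorption with $r$-dependent $\varepsilon$, and finally integration of $\frac{d}{dt}\|\theta\|_{L^r}^{r/2}$ followed by the $(2/r)$-th power to extract the linear growth in $r$. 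The only cosmetic difference is the precise constant in the C\'ordoba--C\'ordoba lower bound ($\frac{2(r-1)}{r^2}$ versus the paper's $\frac{2}{r}$), which is immaterial since both scale like $1/r$.
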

\begin{proof}
We point out that \eqref{yydkl028} is basically established in \cite[Theorem 1.3]{Yena17}. For the convenience of the reader, the proof will be also sketched here.
Taking the $L^{2}$ inner product of the velocity equation with $u$ and the temperature with $\theta$, we
find by adding them up
\begin{align}\label{ghvber68}
\frac{1}{2}\frac{d}{dt}(\|u(t)\|_{L^{2}}^{2}+\|\theta(t)\|_{L^{2}}^{2})+ \|\Lambda^{\frac{1}{2}}\theta\|_{L^{2}}^{2}=&2\int_{\mathbb{R}^{2}}
{u_{2}\,\theta\,dx} \nonumber\\
 \leq& 2\|u\|_{L^{2}}\|\theta\|_{L^{2}} \nonumber\\
 \leq&
 \|u\|_{L^{2}}^{2}+\|\theta\|_{L^{2}}^{2},
\end{align}
which together with the Gronwall inequality implies
$$\|u(t)\|_{L^{2}}^{2}+\|\theta(t)\|_{L^{2}}^{2}+\int_{0}^{t}{
\|\Lambda^{\frac{1}{2}}\theta(\tau)\|_{L^{2}}^{2}\,d\tau}\leq (\|u_{0}\|_{L^{2}}^{2}+\|\theta_{0}\|_{L^{2}}^{2})e^{2t}.$$
Multiplying $(\ref{BEBou})_{2}$ by
$|\theta|^{r-2}\theta$ and integrating it over whole space, we get
\begin{align}\label{t501}
\frac{1}{r}\frac{d}{dt}\|\theta(t)\|_{L^{r}}^{r}+ \int_{\mathbb{R}^{2}}
\Lambda \theta (|\theta|^{r-2}\theta)\,dx=\int_{\mathbb{R}^{2}}
{u_{2}\,\,(|\theta|^{r-2}\theta)\,dx}.
\end{align}
According to the positivity inequality \cite[Lemma 3.3]{JN} and Sobolev embedding $\dot{H}^{\frac{1}{2}}(\mathbb{R}^{2})\hookrightarrow L^{4}(\mathbb{R}^{2})$, we infer that
$$\int_{\mathbb{R}^{2}}
(\Lambda \theta)|\theta|^{r-2}f\,dx\geq \frac{2}{r}\int_{\mathbb{R}^{2}}
(\Lambda^{\frac{1}{2}}|\theta|^{\frac{r}{2}})^{2}\,dx\geq \frac{\widetilde{c}}{r}\|\theta\|_{L^{2r}}^{r},$$
where $\widetilde{c}>0$ is an absolute constant.
Therefore, one gets from \eqref{t501} that
\begin{align}\label{t502}
\frac{1}{r}\frac{d}{dt}\|\theta(t)\|_{L^{r}}^{r}
+\frac{\widetilde{c}}{r}\|\theta\|_{L^{2r}}^{r}
\leq\int_{\mathbb{R}^{2}}
{u_{2}\,\,(|\theta|^{r-2}\theta)\,dx}.
\end{align}
Owing to the H$\rm {\ddot{o}}$lder inequality, we derive
\begin{align}
\Big|\int_{\mathbb{R}^{2}}
{u_{2}\,\,(|\theta|^{r-2}\theta)\,dx}\Big| \leq&
\|u_{2}\|_{L^{2}}\||\theta|^{r-2}\theta\|_{L^{2}}
\nonumber\\ \leq& \|u\|_{L^{2}}\|\theta\|_{L^{2(r-1)}}^{r-1}\nonumber\\
 \leq& \|u\|_{L^{2}}\|\theta\|_{L^{r}}\|\theta\|_{L^{2r}}^{r-2}.\nonumber
\end{align}
Invoking the $\varepsilon$-Young inequality
$$ab\leq \frac{\varepsilon^{p}}{p}a^{p}+\frac{\varepsilon^{-q}}{q}b^{q}$$
with
$$
a=\|u\|_{L^{2}}\|\theta\|_{L^{r}},\quad b=\|\theta\|_{L^{2r}}^{r-2}, \quad p=\frac{r}{2},\quad q=\frac{r}{r-2}, \quad \varepsilon=\left(\frac{2(r-2)}{\widetilde{c}}\right)^{\frac{r-2}{r}},$$
we have
$$\|u\|_{L^{2}}\|\theta\|_{L^{r}}\|\theta\|_{L^{2r}}^{r-2}\leq \frac{\widetilde{c}}{2r}\|\theta\|_{L^{2r}}^{r}+\frac{2}{r}
\left(\frac{2(r-2)}{\widetilde{c}}\right)^{\frac{r-2}{2}}
\|u\|_{L^{2}}^{\frac{r}{2}}\|\theta\|_{L^{r}}^{\frac{r}{2}},$$
which yields
\begin{align}\label{t503}
\Big|\int_{\mathbb{R}^{2}}
{u_{2}\,\,(|\theta|^{r-2}\theta)\,dx}\Big| \leq&\frac{\widetilde{c}}{2r}\|\theta\|_{L^{2r}}^{r}+\frac{2}{r}
\left(\frac{2(r-2)}{\widetilde{c}}\right)^{\frac{r-2}{2}}
\|u\|_{L^{2}}^{\frac{r}{2}}\|\theta\|_{L^{r}}^{\frac{r}{2}}
\end{align}
Inserting (\ref{t503}) into (\ref{t502}) implies
\begin{align}
\frac{1}{r}\frac{d}{dt}\|\theta(t)\|_{L^{r}}^{r}
&\leq \frac{2}{r}
\left(\frac{2(r-2)}{\widetilde{c}}\right)^{\frac{r-2}{2}}
\|u\|_{L^{2}}^{\frac{r}{2}}\|\theta\|_{L^{r}}^{\frac{r}{2}},\nonumber
\end{align}
which along with \eqref{yydkl027} allows us to get
\begin{align}
\frac{d}{dt}\|\theta(t)\|_{L^{r}}^{\frac{r}{2}}
\leq  \left(\frac{2(r-2)}{\widetilde{c}}\right)^{\frac{r-2}{2}}
\|u\|_{L^{2}}^{\frac{r}{2}} \leq  \left(\frac{2(r-2)}{\widetilde{c}}\right)^{\frac{r-2}{2}}
C(t)^{\frac{r}{2}}.\nonumber
\end{align}
Integrating above inequality in time yields
$$\|\theta(t)\|_{L^{r}}^{\frac{r}{2}}\leq \|\theta_{0}\|_{L^{r}}^{\frac{r}{2}}
+\left(\frac{2(r-2)}{\widetilde{c}}\right)^{\frac{r-2}{2}}
C(t)^{\frac{r}{2}}
.$$
Bearing in mind the fact $(x+y)^{\varepsilon}\leq x^{\varepsilon}+y^{\varepsilon}$ for any $x,\,y\geq0$ and $0\leq \varepsilon\leq1$, we have
\begin{align}\label{xfyped26}
\|\theta(t)\|_{L^{r}}&\leq \left(\|\theta_{0}\|_{L^{r}}^{\frac{r}{2}}
+\left(\frac{2(r-2)}{\widetilde{c}}\right)^{\frac{r-2}{2}}
C(t)^{\frac{r}{2}}\right)^{\frac{2}{r}}\nonumber \\ &\leq
\|\theta_{0}\|_{L^{r}} +\left(\frac{2(r-2)}{\widetilde{c}}\right)^{\frac{r-2}{r}}
C(t).
\end{align}
We notice that
\begin{align}
\left(\frac{2(r-2)}{\widetilde{c}}\right)^{\frac{r-2}{r}}
&=\left(\frac{2}{\widetilde{c}}\right)^{\frac{r-2}{r}}\left(r-2\right)^{\frac{r-2}{r}}
\nonumber\\ &=\left(\frac{2}{\widetilde{c}}\right)^{\frac{r-2}{r}}
\left(r-2\right)^{-\frac{2}{r}} \frac{r-2}{r}\, r,\nonumber
\end{align}
which leads to
\begin{align}\label{xdgqxz11}
\lim_{r\rightarrow +\infty}\frac{\left(\frac{2(r-2)}{\widetilde{c}}\right)^{\frac{r-2}{r}}}{r}
=\lim_{r\rightarrow +\infty}\left(\left(\frac{2}{\widetilde{c}}\right)^{\frac{r-2}{r}}
\left(r-2\right)^{-\frac{2}{r}} \frac{r-2}{r}\right)=\frac{2}{\widetilde{c}}.
\end{align}
Consequently, we conclude from \eqref{xdgqxz11} that there exists a positive constant $C=C(\widetilde{c})$ is independent of $r$ such that
$$\left(\frac{2(r-2)}{\widetilde{c}}\right)^{\frac{r-2}{r}}\leq C \,r,$$
which along with \eqref{xfyped26} implies
\begin{align}
\|\theta(t)\|_{L^{r}}&\leq \|\theta_{0}\|_{L^{r}} +C(t)\left(\frac{2(r-2)}{\widetilde{c}}\right)^{\frac{r-2}{r}}
\nonumber \\ &\leq
\|\theta_{0}\|_{L^{r}} +
C\,r.\nonumber
\end{align}
As a result, (\ref{yydkl028}) follows by dividing both sides of the above inequality by $r$. We thus end the proof of Proposition \ref{Gsds1}.
\end{proof}

\vskip .1in
Next we are going to show the $L^{r}$-bound of the vorticity $\omega$ and $B_{r,1}^{\sigma}$-bound of $\theta$ with $\sigma$ and $r$ satisfying \eqref{vnkhgfd66} below.
\begin{Pros}\label{Pro5add1}
Assume $(u_{0},\,\theta_{0})$ satisfies the assumptions stated in
Theorem \ref{Th1}, then the corresponding solution $(u,\theta)$ satisfies
\begin{eqnarray}\label{tt5add01}
\|\theta(t)\|_{B_{r,1}^{\sigma}} +\|\omega(t)\|_{L^{2}}
+\|\omega(t)\|_{L^{r}} +\int_{0}^{t}{\|\theta(\tau)\|_{B_{r,1}^{\sigma+1}} \,d\tau}\leq C(t,\,u_{0},\,\theta_{0}),
\end{eqnarray}
where $\sigma$ and $r$ satisfy
\begin{align}\label{vnkhgfd66}
\frac{2}{r}-1<\sigma<0,\quad 2<r<\infty.
\end{align}
\end{Pros}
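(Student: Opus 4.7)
My central idea is to work with the combined quantity $\Gamma\triangleq \omega+\mathcal{R}_1\theta$, which removes the singular source term $\partial_1\theta$ appearing in the vorticity equation. Taking the curl of $(\ref{BEBou})_1$ yields $\partial_t\omega+u\cdot\nabla\omega=\partial_1\theta$, while applying $\mathcal{R}_1$ to $(\ref{BEBou})_2$ and using $\Lambda\mathcal{R}_1=\partial_1$ gives
\begin{equation}\nonumber
\partial_t(\mathcal{R}_1\theta)+u\cdot\nabla(\mathcal{R}_1\theta)=-\partial_1\theta+\mathcal{R}_1u_2-[\mathcal{R}_1,u\cdot\nabla]\theta.
\end{equation}
Adding these, $\Gamma$ satisfies the transport-type equation
\begin{equation}\nonumber
\partial_t\Gamma+u\cdot\nabla\Gamma=\mathcal{R}_1u_2-[\mathcal{R}_1,u\cdot\nabla]\theta,
\end{equation}
from which standard $L^r$ energy estimates produce, for $r\in(2,\infty)$,
\begin{equation}\nonumber
\tfrac{d}{dt}\|\Gamma(t)\|_{L^r}\leq \|\mathcal{R}_1u_2\|_{L^r}+\|[\mathcal{R}_1,u\cdot\nabla]\theta\|_{L^r}.
\end{equation}

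Next I would control each source term. Since $\mathcal{R}_1$ is Calder\'on--Zygmund, $\|\mathcal{R}_1u_2\|_{L^r}\lesssim \|u\|_{L^r}$, and $\|u\|_{L^r}$ is bounded by interpolating the energy $\|u\|_{L^2}$ with $\|u\|_{L^\infty}$, the latter coming from Biot--Savart and the vorticity bound $\|\omega\|_{L^2\cap L^r}\lesssim \|\Gamma\|_{L^2\cap L^r}+\|\theta\|_{L^2\cap L^r}$, with $\|\theta\|_{L^r}$ already controlled polynomially in $r$ by Proposition \ref{Gsds1}. For the commutator, Lemma \ref{commutatorq1} gives
\begin{equation}\nonumber
\|[\mathcal{R}_1,u\cdot\nabla]\theta\|_{L^r}\leq C(r)\|\nabla u\|_{L^r}\|\theta\|_{L^\infty}\leq C(r)(\|\Gamma\|_{L^r}+1)\|\theta\|_{L^\infty}.
\end{equation}
An analogous inequality gives control of $\|\Gamma\|_{L^2}$ (and hence $\|\omega\|_{L^2}$). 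Thus the obstacle reduces to taming $\|\theta\|_{L^\infty}$, which is \emph{not} directly available from the maximum principle because of the forcing $u_2$ in $(\ref{BEBou})_2$.

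To handle this, I would perform a Littlewood--Paley analysis of the temperature equation: applying $\Delta_j$ and exploiting the smoothing effect of $\Lambda$ after integration in time yields the transport-diffusion Besov inequality
\begin{equation}\nonumber
\|\theta(t)\|_{B_{r,1}^\sigma}+\int_0^t\|\theta(\tau)\|_{B_{r,1}^{\sigma+1}}\,d\tau\leq C\|\theta_0\|_{B_{r,1}^\sigma}+C\int_0^t\bigl(\|\nabla u\|_{L^\infty}+1\bigr)\|\theta\|_{B_{r,1}^\sigma}\,d\tau+\int_0^t\|u_2\|_{B_{r,1}^\sigma}\,d\tau,
\end{equation}
in which $\|\nabla u\|_{L^\infty}$ is handled through a logarithmic Sobolev inequality in terms of $\|\omega\|_{L^r}$ and higher Besov norms. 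The decisive point is the constraint $\sigma>\tfrac{2}{r}-1$, i.e.\ $\sigma+1>\tfrac{2}{r}$, which gives the critical embedding $B_{r,1}^{\sigma+1}\hookrightarrow L^\infty$, hence
\begin{equation}\nonumber
\|\theta\|_{L^\infty}\leq C\|\theta\|_{B_{r,1}^{\sigma+1}}.
\end{equation}
Substituting this into the $\Gamma$ inequality converts the $\|\theta\|_{L^\infty}$ factor into the very quantity that appears as dissipation on the left of the Besov inequality.

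Setting $A(t)=\|\Gamma(t)\|_{L^r}+\|\Gamma(t)\|_{L^2}+\|\theta(t)\|_{B_{r,1}^\sigma}$ and $B(t)=\|\theta(t)\|_{B_{r,1}^{\sigma+1}}$, the two inequalities combine into an estimate of the schematic form
\begin{equation}\nonumber
A'(t)+B(t)\leq \bigl[l(t)+m(t)\ln(e+A)+n(t)\ln(e+A+B)\bigr](e+A)+f(t),
\end{equation}
with $n(t)$ satisfying the smallness/polynomial growth condition \eqref{ympery8} required by the refined logarithmic Gronwall Lemma \ref{Lem01khj} (thanks to the polynomial bound \eqref{yydkl028}, an appropriate power of $A$ keeps $n$ subcritical). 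Applying that lemma yields the desired global-in-time bound \eqref{tt5add01}; finally, $\|\omega\|_{L^2}$ and $\|\omega\|_{L^r}$ are recovered from $\Gamma=\omega+\mathcal{R}_1\theta$ using the Riesz bound and \eqref{yydkl028}.

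The main obstacle will be aligning the exponents so that the log-Gronwall hypothesis \eqref{ympery8} is satisfied: the commutator estimate produces a factor $\|\nabla u\|_{L^r}\|\theta\|_{L^\infty}\sim A\cdot B$, and replacing $\|\theta\|_{L^\infty}$ by the dissipation norm creates precisely the term $n(t)\ln(e+A+B)$ required, provided the power of $A$ multiplying $B$ is absorbed by $(1+A)^\alpha$ with $\beta<1$. Verifying this subcriticality, and the companion Besov transport-diffusion estimate with the correct logarithmic loss on $\|\nabla u\|_{L^\infty}$, constitute the core technical work.
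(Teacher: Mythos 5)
Your overall architecture (the combined quantity $\Gamma=\omega+\mathcal{R}_{1}\theta$, the commutator estimate of Lemma \ref{commutatorq1}, a Besov smoothing estimate for $\theta$, and the refined logarithmic Gronwall lemma) matches the paper, but the decisive step is wrong. You propose to control $\|\theta\|_{L^{\infty}}$ by the critical embedding $B_{r,1}^{\sigma+1}\hookrightarrow L^{\infty}$, i.e.\ $\|\theta\|_{L^{\infty}}\leq CB(t)$. Substituting this into the $\Gamma$ inequality produces a right-hand side of the form $C\,B(t)\,A(t)$, which is \emph{linear} in the dissipation $B$, not logarithmic; the resulting inequality $A'+B\leq C(1+B)A$ cannot be closed by Lemma \ref{Lem01khj} (or by any Gronwall argument, since one would need $\int_{0}^{t}B\,d\tau<\infty$ a priori, which is exactly what is being proved). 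Your closing remark that this substitution "creates precisely the term $n(t)\ln(e+A+B)$" is not correct: the embedding gives $B$, not $\ln(e+B)$. The paper's actual mechanism is the logarithmic Sobolev interpolation \eqref{LOG2},
\begin{equation}\nonumber
\|\theta\|_{L^{\infty}}\leq C\Bigl(1+\sup_{m\geq2}\tfrac{\|\theta\|_{L^{m}}}{m}\,\ln\bigl(e+\|\theta\|_{B_{r,1}^{\sigma+1}}\bigr)\Bigr),
\end{equation}
which, combined with the polynomial bound \eqref{yydkl028}, yields exactly $n(t)\ln(e+B)$ with $n(t)=\sup_{m\geq2}\|\theta\|_{L^{m}}/m\in L_{loc}^{1}$ satisfying the subcriticality condition \eqref{ympery8}. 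That logarithm is the whole point of the proposition; without it the argument collapses.

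A second, structural issue: in your Besov transport-diffusion estimate you place $\|\nabla u\|_{L^{\infty}}$ in the Gronwall factor and propose to tame it by a logarithmic Sobolev inequality "in terms of $\|\omega\|_{L^{r}}$ and higher Besov norms." At this stage neither $\|\omega\|_{L^{\infty}}$ nor any higher regularity of $u$ is available (those come only in Proposition \ref{Gsds3}, which depends on the present proposition), so this is circular. The paper avoids $\|\nabla u\|_{L^{\infty}}$ entirely: the commutator $[\Delta_{j},u\cdot\nabla]\theta$ is estimated via the Bony decomposition as $C\|\nabla u\|_{L^{r}}\|\theta\|_{B_{\infty,1}^{\sigma}}$, and then $\|\theta\|_{B_{\infty,1}^{\sigma}}\leq C\|\theta\|_{B_{\rho,1}^{\sigma+2/\rho}}\leq C\|\theta\|_{L^{\rho}}\leq C$ using $\sigma<0$ and \eqref{yydkl028}, so that the transport contribution is merely $C\|\omega\|_{L^{r}}$, linear in $A$. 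You would need to adopt both of these devices — the $L^{r}$/$B_{\infty,1}^{\sigma}$ splitting of the commutator and the logarithmic interpolation \eqref{LOG2} — for the proof to go through.
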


\begin{rem}
After checking the proof below, Proposition \ref{Pro5add1} still holds true when $\sigma$ and $r$ satisfy $$\frac{2}{r}-1<\sigma<1,\quad 2<r<\infty$$
provided in addition $\theta_{0}\in B_{r,1}^{\sigma}(\mathbb{R}^{2})$.
\end{rem}

\begin{proof}
We apply $\Delta_{j}$ to $(\ref{BEBou})_{2}$ to obtain
\begin{align}\label{tt5add02}
\partial_{t}\Delta_{j}\theta+(u \cdot \nabla) \Delta_{j}\theta+\Delta_{j}
\Lambda \theta=-[\Delta_{j},\,u \cdot \nabla]\theta+\Delta_{j}u_{2}.
\end{align}
Multiplying (\ref{tt5add02}) by $|\Delta_{j}\theta|^{r-2}\Delta_{j}\theta$, it follows that
\begin{eqnarray}\label{tt5add03}
\frac{1}{r}\frac{d}{dt}
\|\Delta_{j}\theta(t)\|_{L^{r}}^{r} +c2^{
j}\|\Delta_{j}\theta\|_{L^{r}}^{r}\leq \|[\Delta_{j},\,u \cdot
\nabla]\theta\|_{L^{r}}\|\Delta_{j}\theta\|_{L^{r}}^{r-1}
+\|\Delta_{j}u\|_{L^{r}}\|\Delta_{j}\theta\|_{L^{r}}^{r-1},
\end{eqnarray}
where the lower bound was applied (see \cite{CMZ})
$$\int_{\mathbb{R}^{2}}
(\Lambda \Delta_{j}\theta)|\Delta_{j}\theta|^{
r-2}\Delta_{j}\theta\,dx\geq c2^{
j}\|\Delta_{j}\theta\|_{L^{r}}^{r}$$
for some absolute constant $c>0$.
Consequently, we obtain from \eqref{tt5add03} that
\begin{align}\label{tt5add04}
\frac{d}{dt}
\|\Delta_{j}\theta(t)\|_{L^{r}}  +c2^{j}\|\Delta_{j}\theta\|_{L^{r}}
\leq \|[\Delta_{j},\,u \cdot \nabla]\theta\|_{L^{r}}
+\|\Delta_{j}u\|_{L^{r}}.
\end{align}
Multiplying each side of \eqref{tt5add04} by $2^{\sigma j}$ and taking sup with respect to $j$, we derive
\begin{align}\label{tt5add05}
\frac{d}{dt}
\|\theta(t)\|_{B_{r,1}^{\sigma}}+c\|\theta\|_{B_{r,1}^{\sigma+1}}
\leq& C\sum_{j\geq-1}2^{\sigma j}\|[\Delta_{j},\,u \cdot \nabla]\theta\|_{L^{r}}
+C\sum_{j\geq-1}2^{\sigma j}\|\Delta_{j}u\|_{L^{r}}.
\end{align}
Due to $\sigma<0$, it happens that
\begin{align}\label{tt5add06}
C\sum_{j\geq-1}2^{\sigma j}\|\Delta_{j}u\|_{L^{r}}
&\leq C\sum_{j\geq-1}2^{\sigma j}\|u\|_{L^{r}} \nonumber\\
&\leq C(\|u\|_{L^{2}}+\|\omega\|_{L^{2}})
\nonumber\\
&\leq C(1+\|\omega\|_{L^{2}}).
\end{align}
Putting \eqref{tt5add06} into \eqref{tt5add05} implies
\begin{align}\label{tt5add08}
\frac{d}{dt}
\|\theta(t)\|_{B_{r,1}^{\sigma}}+c\|\theta\|_{B_{r,1}^{\sigma+1}}
\leq&  C(1+\|\omega\|_{L^{2}})+C\sum_{j\geq-1}2^{\sigma j}\|[\Delta_{j},\,u \cdot \nabla]\theta\|_{L^{r}}
\end{align}
We next claim
\begin{align}\label{bmhgfxa86}
C\sum_{j\geq-1}2^{\sigma j}\|[\Delta_{j},\,u \cdot \nabla]\theta\|_{L^{r}} \leq C\|\nabla u\|_{L^{r}}
\|\theta\|_{B_{\infty,1}^{\sigma}},
\end{align}
where $-1<\sigma<1$. Invoking the Bony decomposition, the commutator of \eqref{bmhgfxa86} can be written as
\begin{align}\label{tt5add09}
[\Delta_{j},\,u\cdot\nabla]\theta= & \sum_{|j-q|\leq
4}[\Delta_{j},\,S_{q-1}u\cdot\nabla]\Delta_{q}\theta+\sum_{|j-q|\leq
4}[\Delta_{j},\,\Delta_{q}u\cdot\nabla]S_{q-1}\theta\nonumber\\& +\sum_{q-j
\geq
-4}[\Delta_{j},\,\Delta_{q}u\cdot\nabla]\widetilde{\Delta}_{q}\theta\nonumber\\ \triangleq&
M_{1}^{j}+M_{2}^{j}+M_{3}^{j}.
\end{align}
Let us recall the fact (\cite[Lemma 3.2]{HK4})
\begin{align}\label{tt5add10}
\|h\ast(fg)-f(h\ast g)\|_{L^{p}}\leq \|xh\|_{L^{1}}\|\nabla f\|_{L^{p}}\|g\|_{L^{\infty}},
\end{align}
where $\ast$ stands for the convolution symbol.
The straightforward computations combined with Lemma \ref{vfgty8xc} and \eqref{tt5add10} ensure that
\begin{align}\label{tt5add11}
 C\sum_{j\geq-1}2^{\sigma j}\|M_{1}^{j}\|_{L^{r}} &= C\sum_{j\geq-1}2^{\sigma j}\Big\|\sum_{|j-q|\leq 4}\big(h_{q}\ast(S_{j-1}u\cdot\nabla\Delta_{j}\theta)-
S_{j-1}u\cdot(h_{q}\ast\nabla\Delta_{j}\theta)\big)\big\|_{L^{r}} \nonumber\\
 &\leq C \sum_{j\geq-1}2^{\sigma j}\sum_{|j-q|\leq 4}\|x h_{q}\|_{L^{1}} \|\nabla S_{j-1}u\|_{L^{r}} \|\nabla\Delta_{j}\theta\|_{L^{\infty}}
\nonumber\\ &\leq C\sum_{j\geq-1}2^{\sigma j}\sum_{|j-q|\leq 4}2^{j-q} \|\nabla u\|_{L^{r}}  \|\Delta_{j}\theta\|_{L^{\infty}} \nonumber\\ &\leq C\|\nabla u\|_{L^{r}}  \sum_{j\geq-1}2^{\sigma j} \|\Delta_{j}\theta\|_{L^{\infty}}
\nonumber\\ &\leq C\|\nabla u\|_{L^{r}}
\|\theta\|_{B_{\infty,1}^{\sigma}},
\end{align}
where $h_{q}(x)=2^{2q}\check{\varphi}(2^{q}x)$.
Similarly, we also end up with
\begin{align}\label{tt5add12}
C\sum_{j\geq-1}2^{\sigma j}\|M_{2}^{j}\|_{L^{r}}  \leq&C\sum_{j\geq-1}2^{\sigma j}\sum_{|j-q|\leq 4}2^{-q}
\|\Delta_{j}\nabla u\|_{L^{r}} \|\nabla S_{j-1}\theta\|_{L^{\infty}}
\nonumber\\ \leq&C\sum_{j\geq-1}2^{\sigma j}\sum_{|j-q|\leq 4}2^{-q}
\|\nabla u\|_{L^{r}}\|\nabla S_{j-1}\theta\|_{L^{\infty}}
\nonumber\\ \leq&C\|\nabla u\|_{L^{r}}\sum_{j\geq-1}2^{(\sigma-1)j}
\|\nabla S_{j-1}\theta\|_{L^{\infty}}
\nonumber\\ \leq&C\|\nabla u\|_{L^{r}} \sum_{j\geq-1}2^{(\sigma-1)j}\sum_{k\leq j-2}
\|\nabla \Delta_{k}\theta\|_{L^{\infty}}
\nonumber\\ \leq&C\|\nabla u\|_{L^{r}} \sum_{j\geq-1}2^{(\sigma-1)j}\sum_{k\leq j-2}2^{-k(\sigma-1)}
2^{k\sigma}\|\Delta_{k}\theta\|_{L^{\infty}}
\nonumber\\ \leq&C\|\nabla u\|_{L^{r}} \sum_{j\geq-1}\sum_{k\leq j-2}2^{(\sigma-1)(j-k)}
2^{k\sigma}\|\Delta_{k}\theta\|_{L^{\infty}}
\nonumber\\ \leq& C\|\nabla u\|_{L^{r}}
\|\theta\|_{B_{\infty,1}^{\sigma}},
\end{align}
where we have used the Young inequality for series due to $\sigma<1$.
We further split the term $M_{3}^{j}$ into two parts
\begin{align}
M_{3}^{j} =\sum_{q-j \geq -4,\,q\geq0}[\Delta_{j},\,\Delta_{q}u\cdot\nabla]\widetilde{\Delta}_{q}\theta+
[\Delta_{j},\,\Delta_{-1}u\cdot\nabla]\widetilde{\Delta}_{-1}\theta\triangleq M_{31}^{j}+M_{32}^{j}.\nonumber
\end{align}
Thanks to $\nabla\cdot u=0$, the term $M_{31}^{j}$ can be bounded without the use of  commutator structure. More precisely, a straightforward computations yield
\begin{align}\label{tt5add13}
&C\sum_{j\geq-1}2^{\sigma j}\|M_{31}^{j}\|_{L^{r}} \nonumber\\&=C\sum_{j\geq-1}2^{\sigma j}
\left\|\sum_{q-j \geq -4,\,q\geq0}\Big\{\Delta_{j}\nabla\cdot(\Delta_{q}u
\widetilde{\Delta}_{q}\theta)-\Delta_{q}u\Delta_{j}\nabla\cdot
\widetilde{\Delta}_{q}\theta\Big\}\right\| _{L^{r}}
\nonumber\\ &\leq
C\sum_{j\geq-1}2^{\sigma j}
\sum_{q-j \geq -4,\,q\geq0}\left(\|\Delta_{j}\nabla\cdot(\Delta_{q}u
\widetilde{\Delta}_{q}\theta)\|_{L^{r}} +\| \Delta_{q}u\Delta_{j}\nabla\cdot
\widetilde{\Delta}_{q}\theta\|_{L^{r}}\right)
\nonumber\\ &\leq
C\sum_{j\geq-1}2^{\sigma j}
\sum_{q-j \geq -4,\,q\geq0}\left(2^{j} \|\Delta_{q}u
\widetilde{\Delta}_{q}\theta \|_{L^{r}}+ \| \Delta_{q}u \|_{L^{r}}\|\Delta_{j}\nabla\cdot
\widetilde{\Delta}_{q}\theta \| _{L^{\infty}}\right)
\nonumber\\ &\leq C\sum_{j\geq-1}2^{\sigma j}\sum_{q-j \geq -4,\,q\geq0}2^{j}
\|\Delta_{q} u\|_{L^{r}}\|\widetilde{\Delta}_{q}\theta\|_{L^{\infty}}\nonumber\\
 &\leq C\sum_{j\geq-1}2^{\sigma j}\sum_{q-j \geq -4,\,q\geq0}2^{j-q}
\|\Delta_{q}\nabla u\|_{L^{r}}\|\widetilde{\Delta}_{q}\theta\|_{L^{\infty}}
\nonumber\\
 &\leq C\sum_{j\geq-1}2^{\sigma j}\sum_{q-j \geq -4}2^{j-q}2^{-\sigma q}
\| \nabla u\|_{L^{r}}2^{\sigma q}\|\widetilde{\Delta}_{q}\theta\|_{L^{\infty}}
\nonumber\\
 &\leq C\| \nabla u\|_{L^{r}} \sum_{j\geq-1} \sum_{q-j \geq -4} 2^{(\sigma+1)(j-q)}
2^{\sigma q}\|\widetilde{\Delta}_{q}\theta\|_{L^{\infty}}
\nonumber\\ &\leq  C\|\nabla u\|_{L^{r}}
\|\theta\|_{B_{\infty,1}^{\sigma}},
\end{align}
where we have used the Young inequality for series due to $\sigma>-1$.
Finally, Lemma \ref{vfgty8xc} and \eqref{tt5add10} allow us to derive
\begin{align}\label{tt5add14}
C\sum_{j\geq-1}2^{\sigma j}\|M_{32}^{j}\|_{L^{r}}  &=C\sum_{-1\leq j\leq 2 }2^{\sigma j}\|M_{32}^{j}\|_{L^{r}}
\nonumber\\ &\leq C \|\Delta_{-1}\nabla u\|_{L^{r}}
\|\widetilde{\Delta}_{-1} \nabla\theta\|_{L^{\infty}} \nonumber\\ & \leq C\|\nabla u\|_{L^{r}}
\|\theta\|_{B_{\infty,1}^{\sigma}}.
\end{align}
Putting \eqref{tt5add11}, \eqref{tt5add12},  \eqref{tt5add13} and  \eqref{tt5add14} into \eqref{tt5add09} gives the desired bound \eqref{bmhgfxa86}. Now inserting \eqref{bmhgfxa86} into \eqref{tt5add08} implies
\begin{align}\label{tt5add15}
\frac{d}{dt}
\|\theta(t)\|_{B_{r,1}^{\sigma}}+c\|\theta\|_{B_{r,1}^{\sigma+1}}
\leq&  C(1+\|\omega\|_{L^{2}})+ C\|\nabla u\|_{L^{r}}
\|\theta\|_{B_{\infty,1}^{\sigma}}\nonumber\\
\leq& C(1+\|\omega\|_{L^{2}})+C\|\nabla u\|_{L^{r}}
\|\theta\|_{B_{\rho,1}^{\sigma+\frac{2}{\rho}}}
\nonumber\\
\leq& C(1+\|\omega\|_{L^{2}})+C\|\omega\|_{L^{r}}
\|\theta\|_{L^{\rho}}
\nonumber\\
\leq&
 C(1+\|\omega\|_{L^{2}})+C\|\theta\|_{L^{\rho}} \|\omega\|_{L^{r}}
\nonumber\\
\leq&
 C(1+\|\omega\|_{L^{2}})+C\|\omega\|_{L^{r}}
 \nonumber\\
\leq&
 C(1+\|\omega\|_{L^{2}}+\|\omega\|_{L^{r}}),
\end{align}
where we have used the simple embedding $L^{\rho}(\mathbb{R}^2)\hookrightarrow B_{\rho,1}^{\sigma+\frac{2}{\rho}}(\mathbb{R}^2)$ with $\rho\in (-\frac{2}{\sigma}, \infty)$ and $\sigma<0$.
It thus follows from \eqref{tt5add15} that
\begin{align}\label{tt5add16}
\frac{d}{dt}
\|\theta(t)\|_{B_{r,1}^{\sigma}}+c\|\theta\|_{B_{r,1}^{\sigma+1}}
 \leq C(1+\|\omega\|_{L^{2}}+\|\omega\|_{L^{r}}).
\end{align}
In order to close \eqref{tt5add16}, we appeal to the vorticity $\omega$ equation
\begin{eqnarray}\label{BEB002}
\partial_{t}\omega+(u\cdot\nabla)\omega=\partial_{1}\theta.
\end{eqnarray}
However, due to $\sigma<0$, the terms $\|\omega\|_{L^{2}}$ and $\|\omega\|_{L^{r}}$ at the right hand side of \eqref{tt5add16} can not be closed effectively by the vorticity equation \eqref{BEB002} itself. To overcome this difficulty, one can resort to take advantage of the elegant method introduced by Hmidi, Keraani and Rousset \cite{HK3,HK4} to deal with the 2D critical Boussinesq equations. Applying $\mathcal {R}_{1}$ to $(\ref{BEBou})_{2}$, one has
\begin{eqnarray}\label{vbndf56}
\partial_{t}\mathcal {R}_{1}\theta+(u \cdot \nabla) \mathcal {R}_{1}\theta+\Lambda \mathcal {R}_{1}\theta=\mathcal {R}_{1}u_{2}-[\mathcal {R}_{1},u \cdot \nabla]\theta.
\end{eqnarray}
Concerning $(\ref{vbndf56})$ and \eqref{BEB002}, it is not hard to check that the following combined quantity
$$\Gamma\triangleq\omega+\mathcal {R}_{1} \theta,$$
obeys the equation
\begin{eqnarray}\label{fgt504}
\partial_{t}\Gamma+(u\cdot\nabla)
\Gamma=\mathcal {R}_{1}u_{2}-[\mathcal
{R}_{1},\,u\cdot\nabla]\theta.
\end{eqnarray}
In fact, the equation \eqref{fgt504} plays an important role in closing  \eqref{tt5add16}. To this end, multiplying \eqref{fgt504} by $|\Gamma|^{r-2}\Gamma$ and using \eqref{sdfvh80}, we find
\begin{align}\label{flgdaew11}
\frac{1}{r}\frac{d}{dt}\|\Gamma(t)\|_{L^{r}}^{r}
 &=\int_{\mathbb{R}^{2}}\mathcal {R}_{1}u_{2}
|\Gamma|^{r-2}\Gamma\,dx-\int_{\mathbb{R}^{2}}[\mathcal
{R}_{1},\,u\cdot\nabla]\theta
|\Gamma|^{r-2}\Gamma\,dx
\nonumber\\
& \leq \|\mathcal {R}_{1}u_{2}\|_{L^{r}}\|\Gamma \|_{L^{r}}^{r-1}+\|[\mathcal
{R}_{1},\,u\cdot\nabla]\theta\|_{L^{r}}\|\Gamma\|_{L^{r}}^{r-1}
\nonumber\\
& \leq C\|u\|_{L^{r}}\|\Gamma(t)\|_{L^{r}}^{r-1}+C\|\nabla u \|_{L^{r}}\|\theta\|_{L^{\infty}}\|\Gamma\|_{L^{r}}^{r-1}
\nonumber\\
& \leq C\|u\|_{L^{2}}^{\frac{r}{2r-2}}\|\omega\|_{L^{r}}^{\frac{r-2}{2r-2}}
\|\Gamma\|_{L^{r}}^{r-1}
+C\|\omega\|_{L^{r}}\|\theta\|_{L^{\infty}}\|\Gamma\|_{L^{r}}^{r-1}
\nonumber\\
& \leq C\|u\|_{L^{2}}^{\frac{r}{2r-2}}(\|\Gamma\|_{L^{r}}+\|\mathcal {R}_{1}\theta\|_{L^{r}})^{\frac{r-2}{2r-2}}
\|\Gamma\|_{L^{r}}^{r-1}
\nonumber\\
&\quad+C(\|\Gamma\|_{L^{r}}+\|\mathcal {R}_{1}\theta\|_{L^{r}})\|\theta\|_{L^{\infty}}\|\Gamma\|_{L^{r}}^{r-1}
\nonumber\\
& \leq C\|u\|_{L^{2}}^{\frac{r}{2r-2}}(\|\Gamma\|_{L^{r}}+\|\theta\|_{L^{r}})
^{\frac{r-2}{2r-2}}
\|\Gamma\|_{L^{r}}^{r-1}
\nonumber\\&\quad+C(\|\Gamma\|_{L^{r}}+\|\theta\|_{L^{r}})\|\theta\|_{L^{\infty}}
\|\Gamma\|_{L^{r}}^{r-1},
\end{align}
which leads to
\begin{align}\label{fzhpet17}
 \frac{d}{dt}\|\Gamma(t)\|_{L^{r}}
\leq C(1+\|\theta\|_{L^{\infty}})(1+\|\Gamma\|_{L^{r}}).
\end{align}
In particular, taking $r=2$ in \eqref{flgdaew11}, we also have
\begin{align}\label{vbjlwq6}
 \frac{d}{dt}\|\Gamma(t)\|_{L^{2}}
\leq C(1+\|\theta\|_{L^{\infty}})(1+\|\Gamma\|_{L^{2}})
\end{align}
Let us also emphasize that after checking the proof of \cite[Proposition 5.2]{HK4}, we may find
\begin{align} \label{fngkp897}
 \frac{d}{dt}\|\Gamma(t)\|_{L^{r}}
\leq C(1+\|\theta\|_{B_{\infty,2}^{0}})(1+\|\Gamma\|_{L^{r}}).
\end{align}
As space $L^{\infty}$ and Besov space $B_{\infty,2}^{0}$ do not contain each other, we are not able to distinguish which is better among \eqref{fzhpet17} and \eqref{fngkp897}. As stated in the introduction part, \eqref{fngkp897} is insufficient to continue with the subsequent estimates. In fact, \eqref{fzhpet17} plays a crucial role in proving our theorem.
Summing up \eqref{tt5add16}, \eqref{fzhpet17} and \eqref{vbjlwq6}, we thus have
\begin{align}\label{fzhpet18}
 &\frac{d}{dt}(\|\theta(t)\|_{B_{r,1}^{\sigma}}+\|\Gamma(t)\|_{L^{2}}
 +\|\Gamma(t)\|_{L^{r}})
+\|\theta\|_{B_{r,1}^{\sigma+1}}
\nonumber\\&\leq C(1+\|\omega\|_{L^{2}}+\|\omega\|_{L^{r}})+ C(1+\|\theta\|_{L^{\infty}})(1+\|\Gamma\|_{L^{2}}
 +\|\Gamma\|_{L^{r}})
\nonumber\\&\leq
C(1+\|\Gamma\|_{L^{2}}+\|\mathcal {R}_{1}\theta\|_{L^{2}}
+\|\Gamma\|_{L^{r}}+\|\mathcal {R}_{1}\theta\|_{L^{r}})\nonumber\\&\quad+ C(1+\|\theta\|_{L^{\infty}})(1+\|\Gamma\|_{L^{2}}
 +\|\Gamma\|_{L^{r}})
\nonumber\\&\leq C(1+\|\theta\|_{L^{\infty}})\left(1+\|\theta\|_{B_{r,1}^{\sigma}}+\|\Gamma\|_{L^{2}}
 +\|\Gamma\|_{L^{r}}\right).
\end{align}
We denote
$$A(t)\triangleq 1+ \|\theta(t)\|_{B_{r,1}^{\sigma}} +\|\Gamma(t)\|_{L^{2}}
 +\|\Gamma(t)\|_{L^{r}},\qquad B(t)\triangleq \|\theta(t)\|_{B_{r,1}^{\sigma+1}}.$$
It thus follows from \eqref{fzhpet18} that
\begin{equation}\label{fzhpet19}
\frac{d}{dt}A(t)+B(t)\leq CA(t)+C\|\theta(t)\|_{L^{\infty}}A(t).
\end{equation}
To control $\|\theta\|_{L^{\infty}}$ via \eqref{yydkl028}, we need the following logarithmic Sobolev interpolation inequality (see \cite[Proposition 5.2]{ACW11})
\begin{eqnarray}\label{LOG2}
\|\theta\|_{L^{\infty}}
\leq C\left(1+\sup_{m\geq2}\frac{\|\theta\|_{L^{m}}}{m}
\ln\big(e+\|\theta\|_{B_{r,1}^{\sigma+1}}\big)\right),\quad \sigma>\frac{2}{r}-1.
\end{eqnarray}
Applying \eqref{LOG2} to \eqref{fzhpet19} shows
\begin{equation}\label{fzhpet20}
\frac{d}{dt}A(t)+B(t)\leq C A(t)+Cn(t)
\ln\big(e+B(t)\big)A(t),
\end{equation}
where $n(t)$ is given by
$$n(t)\triangleq \sup_{m\geq2}\frac{\|\theta(t)\|_{L^{m}}}{m}.$$
Due to \eqref{yydkl028}, we have that $n(t)$ belongs to $L_{loc}^{1}(\mathbb{R}_{+})$. Moreover, it is not hard to check that $n(t)$ satisfies \eqref{ympery8}, namely,
\begin{align}
 n(t)
&\leq \|\theta\|_{L^{2}}+\|\theta\|_{L^{\infty}}\nonumber\\ &\leq \|\theta\|_{L^{2}}+C\|\theta\|_{L^{2}}^{1-\frac{1}{\sigma+2-\frac{2}{r}}}
\|\theta\|_{B_{r,1}^{\sigma+1}}^{\frac{1}{\sigma+2-\frac{2}{r}}}
\nonumber\\ &\leq C+C
\|\theta\|_{B_{r,1}^{\sigma+1}}^{\frac{1}{\sigma+2-\frac{2}{r}}}
\nonumber\\ &\leq C
\big(1+ B(t)\big)^{\frac{1}{\sigma+2-\frac{2}{r}}}\nonumber
\end{align}
with $\frac{1}{\sigma+2-\frac{2}{r}}<1$.
We now apply Lemma \ref{Lem01khj} to \eqref{fzhpet20} to conclude
\begin{equation}
A(t)+\int_{0}^{t}{B(\tau)\,d\tau}\leq C(t),\nonumber
\end{equation}
which is nothing but
$$\|\theta(t)\|_{B_{r,1}^{\sigma}} +\|\Gamma(t)\|_{L^{2}}
+\|\Gamma(t)\|_{L^{r}}
+\int_{0}^{t}{\|\theta(\tau)\|_{B_{r,1}^{\sigma+1}} \,d\tau}\leq C(t,\,u_{0},\,\theta_{0}).$$
Due to the relation $\Gamma=\omega+\mathcal {R}_{1} \theta$ and \eqref{yydkl028}, we readily get
$$\|\theta(t)\|_{B_{r,1}^{\sigma}} +\|\omega(t)\|_{L^{2}}
+\|\omega(t)\|_{L^{r}}
+\int_{0}^{t}{\|\theta(\tau)\|_{B_{r,1}^{\sigma+1}} \,d\tau}\leq C(t,\,u_{0},\,\theta_{0}).$$
Therefore, we end up with the proof of Proposition \ref{Pro5add1}.
\end{proof}

\vskip .1in
With the global bounds in Proposition \ref{Pro5add1} at our disposal, we are ready to show the key $L^{\infty}$-boundedness of the vorticity.
\begin{Pros} \label{Gsds3}
Assume $(u_{0},\,\theta_{0})$ satisfies the assumptions stated in
Theorem \ref{Th1}, then the corresponding solution $(u,\theta)$ admits the following global bound
\begin{eqnarray}\label{yyfvxaq226}
\|\omega(t)\|_{L^{\infty}}+\|\mathcal {R}_{1}\theta(t)\|_{L^{\infty}} \leq C(t,\,u_{0},\,\theta_{0}).
\end{eqnarray}
\end{Pros}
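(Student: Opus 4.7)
The strategy is to control $\|\omega(t)\|_{L^\infty}$ and $\|\mathcal{R}_1\theta(t)\|_{L^\infty}$ together by exploiting the combined quantity $\Gamma=\omega+\mathcal{R}_1\theta$. The equation \eqref{fgt504} for $\Gamma$ is a pure transport equation, and \eqref{vbndf56} for $\mathcal{R}_1\theta$ is a transport-diffusion equation; both carry the same source term $F:=\mathcal{R}_1 u_2-[\mathcal{R}_1,u\cdot\nabla]\theta$. Since $\nabla\cdot u=0$ and $\Lambda$ is non-negative, the standard $L^\infty$ maximum principle applies to both and gives
$$
\|\Gamma(t)\|_{L^\infty}+\|\mathcal{R}_1\theta(t)\|_{L^\infty}\leq \|\omega_0\|_{L^\infty}+2\|\mathcal{R}_1\theta_0\|_{L^\infty}+2\int_0^t\|F(\tau)\|_{L^\infty}\,d\tau.
$$
Since $\omega=\Gamma-\mathcal{R}_1\theta$ and the initial data hypothesis guarantees $\omega_0,\mathcal{R}_1\theta_0\in L^\infty$, establishing \eqref{yyfvxaq226} reduces to proving $\int_0^t\|F(\tau)\|_{L^\infty}\,d\tau\leq C(t)$.

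For the linear source $\mathcal{R}_1 u_2$, the Biot--Savart identity $u_2=\partial_1\Delta^{-1}\omega$ yields $\mathcal{R}_1 u_2=-\mathcal{R}_1^2\Lambda^{-1}\omega$, a smoothing operator on $\omega$. A dyadic decomposition handles this cleanly: for each $j\geq 0$, Bernstein's inequality (Lemma \ref{vfgty8xc}) and the $L^p$-boundedness of the Riesz transform yield
$$
\|\Delta_j \mathcal{R}_1 u_2\|_{L^\infty}\leq C\,2^{-j(1-2/r)}\|\omega\|_{L^r},
$$
which is summable in $j$ because $r>2$. The low-frequency piece $\Delta_{-1}\mathcal{R}_1 u_2$ is a compactly supported Fourier multiplier applied to $u_2\in L^2$ and is therefore controlled by $\|u\|_{L^2}$. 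Together with the uniform bounds of Proposition \ref{Pro5add1} and \eqref{yydkl027}, this makes $\|\mathcal{R}_1 u_2\|_{L^\infty}$ uniformly bounded in time.

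The main obstacle is $\|[\mathcal{R}_1,u\cdot\nabla]\theta\|_{L^\infty}$, because the new commutator estimate Lemma \ref{commutatorq1} is valid only for finite $p$. The plan is to combine a Bony paraproduct decomposition of $[\mathcal{R}_1,u\cdot\nabla]\theta$ with the embedding $B^{2/p}_{p,1}(\mathbb{R}^2)\hookrightarrow L^\infty(\mathbb{R}^2)$ (valid for any $p<\infty$): each dyadic piece is bounded via Lemma \ref{commutatorq1} in $L^p$, and the weighted sum gives an $L^\infty$-type inequality of the shape
$$
\|[\mathcal{R}_1,u\cdot\nabla]\theta\|_{L^\infty}\leq C\bigl(1+\|\omega\|_{L^r}\bigr)\|\theta\|_{L^\infty},
$$
possibly modulo a tame logarithmic factor. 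This step is the heart of the argument and is where the new commutator estimate, delivering $\|\theta\|_{L^\infty}$ rather than a higher-regularity norm on the right-hand side, is indispensable.

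Assembling everything produces a differential inequality of the form
$$
\frac{d}{dt}\bigl(\|\Gamma(t)\|_{L^\infty}+\|\mathcal{R}_1\theta(t)\|_{L^\infty}\bigr)\leq C+C\|\theta(t)\|_{L^\infty}\bigl(\|\Gamma(t)\|_{L^\infty}+\|\mathcal{R}_1\theta(t)\|_{L^\infty}\bigr).
$$
The decisive point is that $\|\theta\|_{L^\infty}\in L^1_{loc}(\mathbb{R}_+)$: Proposition \ref{Pro5add1} supplies $\theta\in L^1_{loc}(\mathbb{R}_+;B^{\sigma+1}_{r,1})$, and since $\sigma+1>2/r$, the embedding $B^{\sigma+1}_{r,1}\hookrightarrow L^\infty$ gives $\int_0^t\|\theta(\tau)\|_{L^\infty}\,d\tau\leq C(t)$. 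Classical Gronwall (or Lemma \ref{Lem01khj} if a residual logarithmic factor survives from the commutator step) then closes the estimate and yields \eqref{yyfvxaq226}, whence $\|\omega\|_{L^\infty}\leq\|\Gamma\|_{L^\infty}+\|\mathcal{R}_1\theta\|_{L^\infty}\leq C(t)$.
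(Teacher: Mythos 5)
Your first two steps match the paper: the twin maximum principles for $\Gamma$ and $\mathcal{R}_{1}\theta$ (both forced by $F=\mathcal{R}_{1}u_{2}-[\mathcal{R}_{1},u\cdot\nabla]\theta$) and the bound on $\|\mathcal{R}_{1}u_{2}\|_{L^{\infty}}$ via Proposition \ref{Pro5add1} are exactly what the paper does. The genuine gap is in the step you yourself call the heart of the argument: the claimed estimate $\|[\mathcal{R}_{1},u\cdot\nabla]\theta\|_{L^{\infty}}\leq C(1+\|\omega\|_{L^{r}})\|\theta\|_{L^{\infty}}$ does not follow from Lemma \ref{commutatorq1} plus a Bony decomposition and the embedding $B_{p,1}^{2/p}\hookrightarrow L^{\infty}$. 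Two things break. First, Lemma \ref{commutatorq1} carries a constant $C(p)$ inherited from $\|\mathcal{R}_{1}f\|_{L^{p}}\leq C\frac{p^{2}}{p-1}\|f\|_{L^{p}}$, which blows up as $p\to\infty$, so the estimate cannot be pushed to the $L^{\infty}$ endpoint. Second, applying the lemma blockwise gives $\|\Delta_{j}[\mathcal{R}_{1},u\cdot\nabla]\theta\|_{L^{p}}\lesssim \|\nabla u\|_{L^{p}}\|\theta\|_{L^{\infty}}$ with no decay in $j$, so the weighted sum $\sum_{j}2^{2j/p}\|\Delta_{j}(\cdot)\|_{L^{p}}$ diverges; to gain summability one needs a genuinely frequency-localized commutator estimate, and the known one (\cite[Theorem 3.3]{HK4}) costs $\|\omega\|_{L^{\infty}}+\|\omega\|_{L^{2}}$ on the velocity side and $\|\theta\|_{B_{\infty,1}^{\epsilon}}+\|\theta\|_{L^{2}}$ (strictly more than $\|\theta\|_{L^{\infty}}$) on the temperature side. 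This is why the paper does \emph{not} use Lemma \ref{commutatorq1} here at all: that lemma is consumed in Proposition \ref{Pro5add1} for the finite-$p$ estimates, and the $L^{\infty}$ step instead invokes the HK4 bound
\begin{equation*}
\|[\mathcal{R}_{1},u\cdot\nabla]\theta\|_{B_{\infty,1}^{0}}\leq C(\|\omega\|_{L^{\infty}}+\|\omega\|_{L^{2}})(\|\theta\|_{B_{\infty,1}^{\epsilon}}+\|\theta\|_{L^{2}}),
\end{equation*}
with $\epsilon\in(0,\sigma+1-\tfrac{2}{r})$ so that $\|\theta\|_{B_{\infty,1}^{\epsilon}}\lesssim\|\theta\|_{B_{r,1}^{\sigma+1}}\in L^{1}_{loc}$ by Proposition \ref{Pro5add1}, and then closes by Gronwall in $\|\omega\|_{L^{\infty}}$.

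A secondary symptom of the same problem is that your final differential inequality is internally inconsistent: if your commutator bound (with only $\|\omega\|_{L^{r}}$, already controlled) were true, the right-hand side would be $C(1+\|\theta\|_{L^{\infty}})$ with no occurrence of the unknown, and a simple time integration would finish — no Gronwall needed. The fact that you reintroduce $\|\Gamma\|_{L^{\infty}}+\|\mathcal{R}_{1}\theta\|_{L^{\infty}}$ on the right multiplied by $\|\theta\|_{L^{\infty}}$ implicitly acknowledges that the commutator really costs $\|\omega\|_{L^{\infty}}$; once that is made explicit you are forced back to the paper's route, with the time-integrable weight $\|\theta\|_{B_{\infty,1}^{\epsilon}}+\|\theta\|_{L^{2}}$ rather than $\|\theta\|_{L^{\infty}}$. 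To repair the proof, replace your paraproduct step by the citation of \cite[Theorem 3.3]{HK4} (or supply an independent proof of a commutator estimate of that type); the rest of your outline then goes through.
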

\begin{proof}
Multiplying \eqref{fgt504} by $|\Gamma|^{r-2}\Gamma$, we shall have
\begin{align}
\frac{1}{r}\frac{d}{dt}\|\Gamma(t)\|_{L^{r}}^{r}
&=\int_{\mathbb{R}^{2}}
{\mathcal {R}_{1}u_{2}\,\,(|\Gamma|^{r-2}\Gamma)\,dx}-\int_{\mathbb{R}^{2}}
{[\mathcal
{R}_{1},\,u\cdot\nabla]\theta\,\,(|\Gamma|^{r-2}\Gamma)\,dx}\nonumber\\&
\leq  \|\mathcal {R}_{1}u_{2}\|_{L^{r}}\|\Gamma\|_{L^{r}}^{r-1}+ \|[\mathcal
{R}_{1},\,u\cdot\nabla]\theta\|_{L^{r}}\|\Gamma\|_{L^{r}}^{r-1},\nonumber
\end{align}
which gives
\begin{align}
 \frac{d}{dt}\|\Gamma(t)\|_{L^{r}}
\leq  \|\mathcal {R}_{1}u_{2}\|_{L^{r}}+ \|[\mathcal
{R}_{1},\,u\cdot\nabla]\theta\|_{L^{r}}.\nonumber
\end{align}
Letting $r\rightarrow\infty$ shows
\begin{align}
 \frac{d}{dt}\|\Gamma(t)\|_{L^{\infty}}
\leq  \|\mathcal {R}_{1}u_{2}\|_{L^{\infty}}+ \|[\mathcal
{R}_{1},\,u\cdot\nabla]\theta\|_{L^{\infty}},\nonumber
\end{align}
which yields the maximum principle
\begin{align}\label{yyfvxaq227}
\|\Gamma(t)\|_{L^{\infty}}\leq \|\Gamma_{0}\|_{L^{\infty}}+\int_{0}^{t}{
\|\mathcal {R}_{1}u_{2}(\tau)\|_{L^{\infty}}\,d\tau} +\int_{0}^{t}{
\|[\mathcal
{R}_{1},\,u\cdot\nabla]\theta(\tau)\|_{L^{\infty}}\,d\tau}.
\end{align}
Keeping in mind the fact
$$\int_{\mathbb{R}^{2}}
(\Lambda f)|f|^{r-2}f\,dx\geq 0,$$
it also follows from \eqref{vbndf56} that
\begin{align}\label{yyfvxaq228}
\|\mathcal {R}_{1}\theta(t)\|_{L^{\infty}}\leq \|\mathcal {R}_{1}\theta_{0}\|_{L^{\infty}}+\int_{0}^{t}{
\|\mathcal {R}_{1}u_{2}(\tau)\|_{L^{\infty}}\,d\tau} +\int_{0}^{t}{
\|[\mathcal
{R}_{1},\,u\cdot\nabla]\theta(\tau)\|_{L^{\infty}}\,d\tau}.
\end{align}
Putting together \eqref{yyfvxaq227} and \eqref{yyfvxaq228}, it implies
\begin{align}\label{yyfvxaq229}
\|\Gamma(t)\|_{L^{\infty}}+\|\mathcal {R}_{1}\theta(t)\|_{L^{\infty}}\leq & \|\Gamma_{0}\|_{L^{\infty}}+\|\mathcal {R}_{1}\theta_{0}\|_{L^{\infty}}+2\int_{0}^{t}{
\|\mathcal {R}_{1}u_{2}(\tau)\|_{L^{\infty}}\,d\tau} \nonumber\\&+2\int_{0}^{t}{
\|[\mathcal
{R}_{1},\,u\cdot\nabla]\theta(\tau)\|_{L^{\infty}}\,d\tau}
\nonumber\\ \leq &
\|\omega_{0}\|_{L^{\infty}}+2\|\mathcal {R}_{1}\theta_{0}\|_{L^{\infty}}+2\int_{0}^{t}{
\|\mathcal {R}_{1}u_{2}(\tau)\|_{L^{\infty}}\,d\tau} \nonumber\\&+2\int_{0}^{t}{
\|[\mathcal
{R}_{1},\,u\cdot\nabla]\theta(\tau)\|_{L^{\infty}}\,d\tau}
\nonumber\\ \leq & C+2\int_{0}^{t}{
\|\mathcal {R}_{1}u_{2}(\tau)\|_{L^{\infty}}\,d\tau}
+2\int_{0}^{t}{
\|[\mathcal
{R}_{1},\,u\cdot\nabla]\theta(\tau)\|_{L^{\infty}}\,d\tau}\nonumber\\ \leq & C+C\int_{0}^{t}{(\|u(\tau)\|_{L^{2}}+\|\nabla u(\tau)\|_{L^{4}})\,d\tau}
\nonumber\\&+2\int_{0}^{t}{
\|[\mathcal
{R}_{1},\,u\cdot\nabla]\theta(\tau)\|_{L^{\infty}}\,d\tau}\nonumber\\ \leq & C
+2\int_{0}^{t}{
\|[\mathcal
{R}_{1},\,u\cdot\nabla]\theta(\tau)\|_{L^{\infty}}\,d\tau}
\nonumber\\ \leq & C
+C\int_{0}^{t}{
\|[\mathcal
{R}_{1},\,u\cdot\nabla]\theta(\tau)\|_{B_{\infty,1}^{0}}\,d\tau}
\nonumber\\ \leq & C
+C\int_{0}^{t}{
(\|\omega\|_{L^{\infty}}+\|\omega\|_{L^{2}})
(\|\theta\|_{B_{\infty,1}^{\epsilon}}
+\|\theta\|_{L^{2}})(\tau)\,d\tau},
\end{align}
where in the last line we have used the commutator estimate due to \cite[Theorem 3.3]{HK4}
$$\|[\mathcal
{R}_{1},\,u\cdot\nabla]\theta\|_{B_{\infty,1}^{0}}\leq C(\|\omega\|_{L^{\infty}}+\|\omega\|_{L^{2}})(\|\theta\|_{B_{\infty,1}^{\epsilon}}
+\|\theta\|_{L^{2}}),\qquad \forall\,\epsilon>0.$$
We deduce from \eqref{yyfvxaq229} that
\begin{align}\label{gdyhkp866}
\|\Gamma(t)\|_{L^{\infty}}+\|\mathcal {R}_{1}\theta(t)\|_{L^{\infty}}\leq & C
+C\int_{0}^{t}{
(\|\omega\|_{L^{\infty}}+\|\omega\|_{L^{2}})(\|\theta\|_{B_{\infty,1}^{\epsilon}}
+\|\theta\|_{L^{2}})(\tau)\,d\tau},
\end{align}
which along with the fact $\|\omega\|_{L^{\infty}}\leq\|\Gamma \|_{L^{\infty}}+\|\mathcal {R}_{1}\theta \|_{L^{\infty}}$ yields
\begin{align}\label{yyfvxaq230}
\|\omega(t)\|_{L^{\infty}}\leq & C
+C\int_{0}^{t}{
(\|\omega(\tau)\|_{L^{\infty}}+\|\omega(\tau)\|_{L^{2}})(\|\theta(\tau)\|_{B_{\infty,1}^{\epsilon}}
+\|\theta(\tau)\|_{L^{2}})\,d\tau}\nonumber\\ \leq & C
+C\int_{0}^{t}{
(1+\|\omega(\tau)\|_{L^{\infty}})(\|\theta(\tau)\|_{B_{\infty,1}^{\epsilon}}
+\|\theta(\tau)\|_{L^{2}})\,d\tau}.
\end{align}
Fixing $\epsilon\in (0,\sigma+1-\frac{2}{r})$ and using \eqref{tt5add01}, we have that
\begin{align}\label{vxvtaq828}
\int_{0}^{t}{(\|\theta(\tau)\|_{B_{\infty,1}^{\epsilon}}
+\|\theta(\tau)\|_{L^{2}})\,d\tau}&\leq C\int_{0}^{t}{(\|\theta(\tau)\|_{B_{r,1}^{\sigma+1}}
+\|\theta(\tau)\|_{L^{2}})\,d\tau}\nonumber\\&\leq C(t,\,u_{0},\,\theta_{0}).
\end{align}
Keeping in mind this bound and applying the Gronwall inequality to \eqref{yyfvxaq230}, we then get
$$\|\omega(t)\|_{L^{\infty}}\leq C(t,\,u_{0},\,\theta_{0}).$$
This together with \eqref{gdyhkp866} and \eqref{vxvtaq828} yield
$$\|\mathcal {R}_{1}\theta(t)\|_{L^{\infty}}\leq C(t,\,u_{0},\,\theta_{0}).$$
In consequence, we finish the proof of Proposition \ref{Gsds3}.
\end{proof}

\subsection{Existence.}
To show the existence of the solution, we first smooth out the initial data to get the following approximate system
\begin{equation}\label{APPBEBou}
\left\{\aligned
&\partial_{t}u^{(n)}+(u^{(n)} \cdot \nabla) u^{(n)}+\nabla p^{(n)}=\theta^{(n)} e_{2},\,\,\,\,\,\,\,x\in \mathbb{R}^{2},\,\,t>0, \\
&\partial_{t}\theta^{(n)}+(u^{(n)} \cdot \nabla) \theta^{(n)}+\Lambda \theta^{(n)}=u_{2}^{(n)}, \\
&\nabla\cdot u^{(n)}=0,\\
&u^{(n)}(x, 0)=S_{n}u_{0}(x),  \quad \theta^{(n)}(x,0)=S_{n}\theta_{0}(x),
\endaligned\right.
\end{equation}
where $S_{n}$ is a smooth low-frequency cut-off operator. Obviously, $(S_{n}u_{0}, S_{n}\theta_{0})$ is bounded in the space given in the statement of Theorem \ref{Th1}. Moreover, $S_{n}u_{0}$ and $ S_{n}\theta_{0}$ belong to all $H^{s}(\mathbb{R}^n)$ for any $s\in \mathbb{R}$. By means of the classical theory of symmetric hyperbolic quasi-linear systems (see for example \cite{MB}), the approximate system \eqref{APPBEBou} has a unique smooth solution $(u^{(n)},\theta^{(n)})$ on $[0,T]$ with some $T>0$, which may depend on $n$. Now we recall a blow-up criterion from \cite{Yena14}: if $\int_{0}^{T}\|\omega^{(n)}(t)\|_{L^{\infty}}\,dt<\infty$, the time $T$ can always be continued beyond. Then for every $n$, the \emph{a priori} estimate
\eqref{yyfvxaq226} guarantees that the solution $(u^{(n)},\theta^{(n)})$ is globally well-defined. Notice that for any $p\in[2,\infty]$
$$\|S_{n}\omega_{0}\|_{L^{p}}\leq \|\omega_{0}\|_{L^{p}},\quad \|S_{n}u_{0}\|_{L^{2}}\leq \|u_{0}\|_{L^{2}},$$
$$ \|S_{n}\theta_{0}\|_{H^{\rho}}\leq \|\theta_{0}\|_{H^{\rho}},\quad  \|S_{n}\mathcal{R}_{1}\theta_{0}\|_{L^{\infty}}\leq \|\mathcal{R}_{1}\theta_{0}\|_{L^{\infty}},$$
it is easy to see that the
\emph{a priori} estimates obtained in Propositions \ref{Gsds1}--\ref{Gsds3} for the approximate system \eqref{APPBEBou} are uniform in $n$, that is,
\begin{align}\label{addtgh01}
u^{(n)}\in L_{loc}^{\infty}(\mathbb{R}_{+}; H^{1}(\mathbb{R}^{2})),\quad \omega^{(n)} \in L_{loc}^{\infty}(\mathbb{R}_{+}; L^{\infty}(\mathbb{R}^{2})),\quad \theta^{(n)} \in L_{loc}^{\infty}(\mathbb{R}_{+}; \mathbb{L}(\mathbb{R}^{2})),
\end{align}
\begin{align}\label{addtgh02}
\mathcal {R}_{1}\theta^{(n)} \in L_{loc}^{\infty}(\mathbb{R}_{+}; L^{\infty}(\mathbb{R}^{2})),\quad \theta^{(n)}\in L_{loc}^{\infty}(\mathbb{R}_{+}; B_{r,1}^{\sigma}(\mathbb{R}^{2}))\cap L_{loc}^{1}(\mathbb{R}_{+}; B_{r,1}^{\sigma+1}(\mathbb{R}^{2})).
\end{align}
In order to show that $(u^{(n)},\theta^{(n)})_{n\in \mathbb{N}}$ converges (up to extraction), a boundedness information (the weak topology) over $(\partial_{t}u^{(n)},\partial_{t}\theta^{(n)})$ is required. To this end, we first claim that $(\partial_{t}u^{(n)})_{n\in \mathbb{N}}$ is bounded in $L_{loc}^{\infty}(\mathbb{R}_{+}; L^{2}(\mathbb{R}^{2}))$. Indeed, applying the Leray projector $\mathcal{P}$ over divergence free vector-fields to the velocity equation yields
$$\partial_{t}u^{(n)}=\mathcal{P}\left(\theta^{(n)} e_{2}-(u^{(n)} \cdot \nabla) u^{(n)}\right),$$
which together with the Calder$\rm\acute{o}$n--Zygmund theorem and interpolation inequality imply
\begin{align}
\|\partial_{t}u^{(n)}\|_{L^{2}}&=\|\mathcal{P}\left(\theta^{(n)} e_{2}-(u^{(n)} \cdot \nabla) u^{(n)}\right)\|_{L^{2}}\nonumber\\
&\leq
\|\theta^{(n)} e_{2}-(u^{(n)} \cdot \nabla) u^{(n)}\|_{L^{2}}\nonumber\\
&\leq\|\theta^{(n)}\|_{L^{2}}+\|(u^{(n)} \cdot \nabla) u^{(n)}\|_{L^{2}}\nonumber\\
&\leq\|\theta^{(n)}\|_{L^{2}}+\|u^{(n)}\|_{L^{p}}
\|\nabla u^{(n)}\|_{L^{\frac{2p}{p-2}}}
\nonumber\\
&\leq\|\theta^{(n)}\|_{L^{2}}+C\|u^{(n)}\|_{L^{p}}
\|\omega^{(n)}\|_{L^{\frac{2p}{p-2}}}
\nonumber\\
&\leq\|\theta^{(n)}\|_{L^{2}}+C
\|u^{(n)}\|_{L^{2}}^{\frac{2}{p}}
\|\omega^{(n)}\|_{L^{2}}^{1-\frac{2}{p}}
\|\omega^{(n)}\|_{L^{\frac{2p}{p-2}}}.\nonumber
\end{align}
Thanks to \eqref{addtgh01} and \eqref{addtgh02}, we thus obtain that $\partial_{t}u^{(n)}\in L_{loc}^{\infty}(\mathbb{R}_{+}; L^{2}(\mathbb{R}^{2}))$ uniform in $n$. As for the temperature, one may derive
\begin{align}
\|\partial_{t}\theta^{(n)}\|_{L_{t}^{2}H^{-1}}&\leq
 \|\Lambda\theta^{(n)}\|_{L_{t}^{2}H^{-1}}+
 \|(u^{(n)} \cdot \nabla)\theta^{(n)}\|_{L_{t}^{2}H^{-1}}+
 \|u_{2}^{(n)}\|_{L_{t}^{2}H^{-1}}
\nonumber\\
&\leq  \|\theta^{(n)}\|_{L_{t}^{2}L^{2}}+
 C\|u^{(n)} \theta^{(n)}\|_{L_{t}^{2}L^{2}}+
 \|u_{2}^{(n)}\|_{L_{t}^{2}L^{2}}
\nonumber\\
&\leq   \|\theta^{(n)}\|_{L_{t}^{2}L^{2}}+
 C\|u^{(n)}\|_{L_{t}^{\infty}L^{4}}\|\theta^{(n)}\|_{L_{t}^{2}L^{4}}+
 \|u_{2}^{(n)}\|_{L_{t}^{2}L^{2}}\nonumber\\
&\leq   \|\theta^{(n)}\|_{L_{t}^{2}L^{2}}+
 C\|u^{(n)}\|_{L_{t}^{\infty}L^{2}}^{\frac{1}{2}}
\|\omega^{(n)}\|_{L_{t}^{\infty}L^{2}}^{\frac{1}{2}}\|\theta^{(n)}\|_{L_{t}^{2}L^{4}}+
 \|u_{2}^{(n)}\|_{L_{t}^{2}L^{2}},\nonumber
\end{align}
which also along with \eqref{addtgh01} and \eqref{addtgh02} imply that $\partial_{t}\theta^{(n)}\in L_{loc}^{2}(\mathbb{R}_{+}; H^{-1}(\mathbb{R}^{2}))$ uniform in $n$. Because $H^{-1}(\mathbb{R}^{2})$ is (locally) compactly embedded in $L^{2}(\mathbb{R}^{2})$ the classical Aubin-Lions argument
(see e.g. \cite{RTemam}) ensures that, up to extraction, sequence $(\partial_{t}u^{(n)},\partial_{t}\theta^{(n)})_{n\in \mathbb{N}}$ strongly converges in $L_{loc}^{2}(\mathbb{R}_{+}; H_{loc}^{-1}(\mathbb{R}^{2}))$ to some function $u,\theta$ satisfying
$$u\in L_{loc}^{\infty}(\mathbb{R}_{+}; H^{1}(\mathbb{R}^{2})),\quad \omega \in L_{loc}^{\infty}(\mathbb{R}_{+}; L^{\infty}(\mathbb{R}^{2})),\quad \theta \in L_{loc}^{\infty}(\mathbb{R}_{+}; \mathbb{L}(\mathbb{R}^{2})),$$
$$\mathcal {R}_{1}\theta  \in L_{loc}^{\infty}(\mathbb{R}_{+}; L^{\infty}(\mathbb{R}^{2})),\quad \theta \in L_{loc}^{\infty}(\mathbb{R}_{+}; B_{r,1}^{\sigma}(\mathbb{R}^{2}))\cap L_{loc}^{1}(\mathbb{R}_{+}; B_{r,1}^{\sigma+1}(\mathbb{R}^{2})).$$
Interpolating with the uniform bounds \eqref{addtgh01} and \eqref{addtgh02}, it is clear to pass the limit in the approximate system \eqref{APPBEBou} and $(u,\theta)$ solves the system \eqref{BEBou} in the sense of distribution. Moreover, we can show $u\in \mathcal{C}(\mathbb{R}_{+}; H^{1}(\mathbb{R}^{2}))$ and $\theta\in \mathcal{C}(\mathbb{R}_{+}; B_{r,1}^{\sigma}(\mathbb{R}^{2}))$.
It suffices to consider $u\in C([0, T]; H^{1}(\mathbb{R}^{2}))$ for any $T>0$ as the same fashion can be applied to $\theta$ to obtain the desired result.
By the equivalent norm, we get
 \begin{align}\label{t2.03}
 \|u(t_{1})-u(t_{2})\|_{H^{1}}=&\left(\sum_{k<N}2^{2k}\|\Delta_{k}u(t_{1})
 -\Delta_{k}u(t_{2})\|_{L^{2}}^{2} \right)^{\frac{1}{2}}\nonumber\\&+ \left(\sum_{k\geq N}
 2^{2k}\|\Delta_{k}u(t_{1})-\Delta_{k}u(t_{2})\|_{L^{2}}^{2} \right)^{\frac{1}{2}}.
 \end{align}
Let $\epsilon>0$ be arbitrarily small. Thanks to
$$\sup_{0\leq t\leq T}\|u(t)\|_{H^{1}}\leq C(T)<\infty,$$
 there exists an integer
$N=N(\epsilon)>0$ such that
 \begin{align}\label{t2.04}
\left(\sum_{k\geq N}
 2^{2k}\|\Delta_{k}u(t_{1})-\Delta_{k}u(t_{2})\|_{L^{2}}^{2}
 \right)^{\frac{1}{2}}<\frac{\epsilon}{2}.
 \end{align}
According to the velocity equation, it implies
\begin{align}
\Delta_{k}u(t_{2})-\Delta_{k}u(t_{1}) =&\int_{t_{1}}^{t_{2}}{\frac{d}{d\tau}
\Delta_{k}u(\tau)\,d\tau}\nonumber\\
 =&-\int_{t_{1}}^{t_{2}}{ \Delta_{k}\mathcal {P}[(u\cdot\nabla) u- \theta](\tau)\,d\tau}.\nonumber
\end{align}
Consequently, we are able to show
\begin{align}
& \sum_{k<N}
 2^{2k} \|\Delta_{k}u(t_{1})-\Delta_{k}u(t_{2})\|_{L^{2}}^{2}
\nonumber\\
 &= \sum_{k<N}
 2^{2k}\Big\|\int_{t_{1}}^{t_{2}}{ \Delta_{k}\mathcal {P}[(u\cdot\nabla) u-\theta](\tau)\,d\tau}\Big\|_{L^{2}}^{2}\nonumber\\
&\leq \sum_{k<N}
 2^{2k}\Big(\int_{t_{1}}^{t_{2}}{ \|\Delta_{k}[(u\cdot\nabla) u-\theta]\|_{L^{2}}(\tau)\,d\tau}\Big)^{2}
\nonumber\\
&\leq \sum_{k<N}
 2^{2k}\Big(\int_{t_{1}}^{t_{2}}{ \big(\|\Delta_{k}(u\cdot\nabla u)(\tau)\|_{L^{2}}+
\|\Delta_{k}\theta(\tau)\|_{L^{2}}\big)\,d\tau}\Big)^{2} \nonumber\\
&\leq\sum_{k<N}
 2^{2k}\Big(\int_{t_{1}}^{t_{2}}{2^{k} \|\Delta_{k}(u\cdot\nabla u)(\tau)\|_{L^{1}}\,d\tau}\Big)^{2}
+
\sum_{k<N}2^{2k}
\Big(\int_{t_{1}}^{t_{2}}{
\|\Delta_{k}\theta(\tau)\|_{L^{2}}\,d\tau}\Big)^{2}
\nonumber\\
&\leq\sum_{k<N}
 2^{4k}\Big(\int_{t_{1}}^{t_{2}}{ \|(u\cdot\nabla u)(\tau)\|_{L^{1}}\,d\tau}\Big)^{2}
+
\sum_{k<N}2^{2k}
\Big(\int_{t_{1}}^{t_{2}}{
\|\Delta_{k}\theta(\tau)\|_{L^{2}}\,d\tau}\Big)^{2}
\nonumber\\
&\leq C\sum_{k<N}
 2^{4k} \|u\|_{L_{t}^{\infty}H^{1}}^{4}|t_{1}-t_{2}|^{2}  +C\sum_{k<N}
 2^{2 k} |t_{1}-t_{2}|^{2}
 \|\theta\|_{L_{t}^{\infty}L^{2}}^{2}  \nonumber\\
&\leq  C2^{4N} \|u\|_{L_{t}^{\infty}H^{1}}^{4}|t_{1}-t_{2}|^{2}  +C
 2^{2N} |t_{1}-t_{2}|^{2}
 \|\theta\|_{L_{t}^{\infty}L^{2}}^{2}\nonumber\\
&\leq  C(2^{4N} \|u\|_{L_{t}^{\infty}H^{1}}^{4} +
 2^{2N}
 \|\theta\|_{L_{t}^{\infty}L^{2}}^{2})|t_{1}-t_{2}|^{2},\nonumber
\end{align}
which implies
\begin{align}\label{cvnpuy9}
\sum_{k<N}
 2^{2k} \|\Delta_{k}u(t_{1})-\Delta_{k}u(t_{2})\|_{L^{2}}^{2}\leq  C\Big(2^{4N} \|u\|_{L_{t}^{\infty}H^{1}}^{4} +
 2^{2N}
 \|\theta\|_{L_{t}^{\infty}L^{2}}^{2}\Big)|t_{1}-t_{2}|^{2}.
 \end{align}
Taking $|t_{1}-t_{2}|$ small enough, we deduce from \eqref{cvnpuy9} that
\begin{align}\label{t2.07}
\left(\sum_{k< N}
 2^{2k}\|\Delta_{k}u(t_{1})-\Delta_{k}u(t_{2})\|_{L^{2}}^{2}
 \right)^{\frac{1}{2}}<\frac{\epsilon}{2}.
 \end{align}
Now the desired $u\in C([0, T]; H^{1}(\mathbb{R}^{2}))$ follows from (\ref{t2.03}), (\ref{t2.04}) and
(\ref{t2.07}). Hence, this completes the proof of existence.

\subsection{Uniqueness.}
The proof of the uniqueness part is more delicate. Due to the fact that we only have the bound of the vorticity in $L^{\infty}$-space, the proof of uniqueness is done by using Yudovich method (see \cite{Yudovich}) and the above derived estimates. More precisely, we let $(\widehat{u},\widehat{\theta},\widehat{p})$ and $(\widetilde{u},\widetilde{\theta},\widetilde{p})$ be two solutions of $(\ref{BEBou})$ enjoying the same initial data. We also denote
$$\delta u\triangleq\widehat{u}-\widetilde{u},\ \ \delta \theta\triangleq\widehat{\theta}-\widetilde{\theta},\ \ \delta p\triangleq\widehat{p}-\widetilde{p},$$
which satisfy
\begin{equation}\label{tdkp324}
\left\{\aligned
&\partial_{t}\delta u+(\widehat{u} \cdot \nabla)\delta u
+\nabla \delta p=\delta\theta e_{2}- ({\delta u} \cdot \nabla) \widetilde{u}, \\
&\partial_{t}\delta\theta+(\widehat{u} \cdot \nabla) \delta\theta+\Lambda \delta\theta=\delta u_{2}- ({\delta u} \cdot \nabla) \widetilde{\theta},  \\
&\nabla\cdot \delta u=0, \\
&\delta u(x,0)=0,\quad \delta \theta(x,0)=0.
\endaligned\right.
\end{equation}
As the assumption on initial temperature $\theta_{0}$ only satisfies $\mathcal {R}_{1}\theta_{0}\in L^{\infty}(\mathbb{R}^{2})$ and $\theta_{0}\in  \mathbb{L}(\mathbb{R}^{2})$, without any regularity assumption on it, proving uniqueness is more involved and it requires some new idea. More precisely, the direct $L^{2}$-energy method is not workable.
To overcome this difficulty, we introduce the auxiliary quantities $\widehat{\eta}$ and $\widetilde{\eta}$ given by
$$\Lambda \widehat{\eta}=\widehat{\theta},\quad \Lambda \widetilde{\eta}=\widetilde{\theta},\quad \delta\eta=\widehat{\eta}-\widetilde{\eta},$$
which yields
$$\Lambda \delta\eta=\delta\theta.$$
We derive from $\eqref{tdkp324}_{2}$ that
\begin{align} \label{di23710}
\partial_{t}\Lambda \delta\eta+(\widehat{u} \cdot \nabla) \Lambda \delta\eta+\Lambda^{2} \delta\eta= \delta u_{2}- ({\delta u} \cdot \nabla) \widetilde{\theta}.
\end{align}
Dotting \eqref{di23710} by $\delta\eta$, we deduce
\begin{align} \label{di23711}
\frac{1}{2}\frac{d}{dt}\|\Lambda^{\frac{1}{2}}\delta\eta(t)\|_{L^{2}}^{2}
+\|\Lambda \delta\eta \|_{L^{2}}^{2}=&\int_{\mathbb{R}^{2}}{\delta u_{2}
\,\delta\eta\,dx}-
\int_{\mathbb{R}^{2}}{(\widehat{u} \cdot \nabla) \Lambda
\delta\eta\,\delta\eta\,dx}\nonumber\\&- \int_{\mathbb{R}^{2}}{({\delta u} \cdot
\nabla) \widetilde{\theta}\,\delta\eta\,dx}.
\end{align}
From a standard $L^2$-energy method, we deduce from $\eqref{tdkp324}_{1}$ that
 \begin{align} \label{di23701}
\frac{1}{2}\frac{d}{dt}\|\delta u(t)\|_{L^{2}}^{2}=-
\int_{\mathbb{R}^{2}}{ ({\delta u} \cdot \nabla) \widetilde{u}\cdot
\delta u\,dx} +\int_{\mathbb{R}^{2}}{ \delta\theta  \delta
u_{2}\,dx}.
\end{align}
Summing up \eqref{di23711} and \eqref{di23701} shows
\begin{align} \label{drg239}
&\frac{1}{2}\frac{d}{dt}(\|\Lambda^{\frac{1}{2}}\delta\eta(t)\|_{L^{2}}^{2}+\|\delta u(t)\|_{L^{2}}^{2})
+\|\Lambda \delta\eta \|_{L^{2}}^{2}\nonumber\\&=\int_{\mathbb{R}^{2}}{\delta u_{2}
\,\delta\eta\,dx}-
\int_{\mathbb{R}^{2}}{(\widehat{u} \cdot \nabla) \Lambda
\delta\eta\,\delta\eta\,dx}- \int_{\mathbb{R}^{2}}{({\delta u} \cdot
\nabla) \widetilde{\theta}\,\delta\eta\,dx}\nonumber\\&\quad-
\int_{\mathbb{R}^{2}}{ ({\delta u} \cdot \nabla) \widetilde{u}\cdot
\delta u\,dx}+\int_{\mathbb{R}^{2}}{ \delta\theta  \delta
u_{2}\,dx}\nonumber\\&\triangleq \sum_{m=1}^{5}A_{m}.
\end{align}
By virtue of $\nabla\cdot \widehat{u}=0$, it yields
\begin{align}
A_{2}&=-
\int_{\mathbb{R}^{2}}{ \widehat{u}_{i} \partial_{i}\Lambda
\delta\eta\,\delta\eta\,dx}
\nonumber\\&
=
\int_{\mathbb{R}^{2}}{ \widehat{u}_{i} \partial_{i}\delta\eta \,\Lambda
\delta\eta\,dx}
\nonumber\\&
=\int_{\mathbb{R}^{2}}{(\widehat{u}
\cdot \nabla)  \delta\eta\,\Lambda\delta\eta\,dx}\nonumber\\&=
\int_{\mathbb{R}^{2}}{\Lambda^{\frac{1}{2}}\{(\widehat{u} \cdot
\nabla)  \delta\eta\}\,\Lambda^{\frac{1}{2}}\delta\eta\,dx}
\nonumber\\&
=\int_{\mathbb{R}^{2}}{[\Lambda^{\frac{1}{2}},\widehat{u}\cdot
\nabla]  \delta\eta \,\Lambda^{\frac{1}{2}}\delta\eta\,dx}, \nonumber
\end{align}
which along with \eqref{cmoi78ye1} yields
\begin{align}
\left|A_{2}\right| &
=\left|\int_{\mathbb{R}^{2}}{[\Lambda^{\frac{1}{2}},\widehat{u}\cdot
\nabla]  \delta\eta \,\Lambda^{\frac{1}{2}}\delta\eta\,dx}\right|
\nonumber\\& \leq
C\|[\Lambda^{\frac{1}{2}},\widehat{u}\cdot
\nabla]  \delta\eta\|_{L^{\frac{3}{2}}}\|\Lambda^{\frac{1}{2}}\delta \eta\|_{L^{3}}
\nonumber\\& \leq   C(\|\nabla
\widehat{u}\|_{L^{6}}\|\Lambda^{\frac{1}{2}}\delta
\eta\|_{L^{2}}+\|\nabla\delta \eta\|_{L^{2}}\|\Lambda^{\frac{1}{2}}
\widehat{u}\|_{L^{6}})\|\Lambda^{\frac{1}{2}}\delta \eta\|_{L^{2}}^{\frac{1}{3}}\|\Lambda \delta \eta\|_{L^{2}}^{\frac{2}{3}}
\nonumber\\& \leq   C(\|\widehat{\omega}\|_{L^{6}}\|\Lambda^{\frac{1}{2}}\delta
\eta\|_{L^{2}}+\|\Lambda \delta
\eta\|_{L^{2}}\|\widehat{\omega}\|_{L^{\frac{12}{5}}})\|\Lambda^{\frac{1}{2}}\delta \eta\|_{L^{2}}^{\frac{1}{3}}\|\Lambda \delta \eta\|_{L^{2}}^{\frac{2}{3}}
\nonumber\\
&\leq \frac{1}{8}\|\Lambda \delta\eta \|_{L^{2}}^{2}+C(\|\widehat{\omega}\|_{L^{6}}^{\frac{3}{2}}
+\|\widehat{\omega}\|_{L^{\frac{12}{5}}}^{6})
\|\Lambda^{\frac{1}{2}}\delta
\eta\|_{L^{2}}^{2}.\nonumber
\end{align}
Invoking \eqref{LOG2} and \eqref{yydkl028}, we have
\begin{align}
\left|A_{3}\right| &
=\left|\int_{\mathbb{R}^{2}}{{\delta u}_{i}\partial_{i}\widetilde{\theta}\,\delta\eta\,dx}\right|
\nonumber\\& =
\left|\int_{\mathbb{R}^{2}}{{\delta u}_{i}\widetilde{\theta}\,\partial_{i}\delta\eta\,dx}\right|
\nonumber\\& \leq
C\|\widetilde{\theta}\|_{L^{\infty}}\|\delta u\|_{L^{2}}\|\Lambda\delta \eta\|_{L^{2}}\nonumber\\
&\leq \frac{1}{8}\|\Lambda \delta\eta \|_{L^{2}}^{2}+C\|\widetilde{\theta}\|_{L^{\infty}}^{2}
\|\delta u\|_{L^{2}}^{2}
\nonumber\\
&\leq \frac{1}{8}\|\Lambda \delta\eta \|_{L^{2}}^{2}+C\left(1+\sup_{m\geq2}\frac{\|\widetilde{\theta}\|_{L^{m}}}{m}
\ln\big(e+\|\widetilde{\theta}\|_{B_{r,1}^{\sigma+1}}\big)\right)^{2}
\|\delta u\|_{L^{2}}^{2}
\nonumber\\
&\leq \frac{1}{8}\|\Lambda \delta\eta \|_{L^{2}}^{2}+C\left(1+\left(\sup_{m\geq2}\frac{\|\widetilde{\theta}\|_{L^{m}}}{m}\right)^{2}
\left(\ln\big(e+\|\widetilde{\theta}\|_{B_{r,1}^{\sigma+1}}\big)\right)^{2}\right)
\|\delta u\|_{L^{2}}^{2}
\nonumber\\
&\leq \frac{1}{8}\|\Lambda \delta\eta \|_{L^{2}}^{2}+C\left(1+\left(\sup_{m\geq2}\frac{\|\widetilde{\theta}\|_{L^{m}}}{m}\right)^{2}
\left(\ln\big(e+\|\widetilde{\theta}\|_{B_{r,1}^{\sigma+1}}\big)\right)^{2}\right)
\|\delta u\|_{L^{2}}^{2}
\nonumber\\
&\leq \frac{1}{8}\|\Lambda \delta\eta \|_{L^{2}}^{2}+C\left(1+\left(\sup_{m\geq2}\frac{\|\widetilde{\theta}\|_{L^{m}}}{m}\right)^{2}\right)
(1+\|\widetilde{\theta}\|_{B_{r,1}^{\sigma+1}})
\|\delta u\|_{L^{2}}^{2}\nonumber\\
&\leq \frac{1}{8}\|\Lambda \delta\eta \|_{L^{2}}^{2}+C
(1+\|\widetilde{\theta}\|_{B_{r,1}^{\sigma+1}})
\|\delta u\|_{L^{2}}^{2},\nonumber
\end{align}
where we have used the simple fact
$$ \ln\big(e+\|\widetilde{\theta}\|_{B_{r,1}^{\sigma+1}}\big) \leq C\Big(1+\|\widetilde{\theta}\|_{B_{r,1}^{\sigma+1}}\Big)^{\frac{1}{2}}.$$
For the term $A_{4}$, we conclude
\begin{align}
\left|A_{4}\right|
\leq& C\|\nabla \widetilde{u}\|_{L^{r}} \|\delta u\|_{L^{\frac{2r}{r-1}}}^{2}
\nonumber\\
 \leq&Cr\|\widetilde{\omega}\|_{L^{r}} \|\delta u\|_{L^{\frac{2r}{r-1}}}^{2}
 \nonumber\\
 \leq&Cr\|\widetilde{\omega}\|_{L^{r}} \|\delta u\|_{L^{2}}^{\frac{2r-2}{r}}
 \|\delta u\|_{L^{\infty}}^{\frac{2}{r}} \nonumber\\
 \leq&Cr(\|\widetilde{\omega}\|_{L^{2}}+\|\widetilde{\omega}\|_{L^{\infty}}) \|\delta u\|_{L^{2}}^{\frac{2r-2}{r}}
 \|\delta u\|_{L^{\infty}}^{\frac{2}{r}}.\nonumber
\end{align}
In view of the basic energy estimate and \eqref{yyfvxaq226}, we have that
\begin{align}
\|\delta u(t)\|_{L^{\infty}}
  \leq C(t,\,u_{0},\,\theta_{0}).\nonumber
\end{align}
The term $A_{5}$ can be bounded by
\begin{align}
\left|A_{5}\right| &
=\left|\int_{\mathbb{R}^{2}}{ \Lambda \delta\eta \delta
u_{2}\,dx}\right|
\nonumber\\& \leq
C\|\delta u\|_{L^{2}}\|\Lambda\delta \eta\|_{L^{2}}\nonumber\\
&\leq \frac{1}{8}\|\Lambda \delta\eta \|_{L^{2}}^{2}+C
\|\delta u\|_{L^{2}}^{2}.\nonumber
\end{align}
Putting the above estimates into \eqref{drg239} yields
\begin{align} \label{vkdxza11}
&\frac{d}{dt}(\|\Lambda^{\frac{1}{2}}\delta\eta(t)\|_{L^{2}}^{2}+\|\delta u(t)\|_{L^{2}}^{2})
+\|\Lambda \delta\eta \|_{L^{2}}^{2}\nonumber\\&\leq CA_{1}+Cr(\|\widetilde{\omega}\|_{L^{2}}+\|\widetilde{\omega}\|_{L^{\infty}}) \|\delta u\|_{L^{2}}^{\frac{2r-2}{r}}
 \|\delta u\|_{L^{\infty}}^{\frac{2}{r}}\nonumber\\&\quad+C\left(1+\|\widehat{\omega}\|_{L^{6}}^{\frac{3}{2}}
+\|\widehat{\omega}\|_{L^{\frac{12}{5}}}^{6}+\|\widetilde{\theta}\|_{L^{2}}^{2}+
\|\widetilde{\theta}\|_{B_{r,1}^{\sigma+1}}\right)
(\|\Lambda^{\frac{1}{2}}\delta\eta\|_{L^{2}}^{2}+\|\delta u\|_{L^{2}}^{2}).
\end{align}
Unfortunately, the term $A_{1}$ can not be handled as above. To close \eqref{vkdxza11}, we need to estimate $\|\Lambda^{-\frac{1}{2}}\delta u\|_{L^{2}}$. To this end, we dot $\eqref{tdkp324}_{1}$ by $\Lambda^{-1}\delta u$ to conclude
\begin{align}
 \frac{d}{dt}\|\Lambda^{-\frac{1}{2}}\delta u(t)\|_{L^{2}}^{2}=&-2
\int_{\mathbb{R}^{2}}{ (\widehat{u} \cdot \nabla)\delta u\cdot\Lambda^{-1}
\delta u\,dx}-2
\int_{\mathbb{R}^{2}}{ ({\delta u} \cdot \nabla) \widetilde{u}\cdot\Lambda^{-1}
\delta u\,dx} \nonumber\\&+2\int_{\mathbb{R}^{2}}{ \delta\theta  \Lambda^{-1}\delta
u_{2}\,dx}\nonumber\\=&-2
\int_{\mathbb{R}^{2}}{ \widehat{u}_{i} \partial_{i}\delta u_{j} \Lambda^{-1}
\delta u_{j}\,dx}-2
\int_{\mathbb{R}^{2}}{ {\delta u}_{i} \partial_{i}  \widetilde{u}_{j} \Lambda^{-1}
\delta u_{j}\,dx} \nonumber\\&+2\int_{\mathbb{R}^{2}}{ \Lambda \delta\eta  \Lambda^{-1}\delta
u_{2}\,dx}\nonumber\\=&2
\int_{\mathbb{R}^{2}}{ \widehat{u}_{i}\delta u_{j} \partial_{i}\Lambda^{-1}
\delta u_{j}\,dx}+2
\int_{\mathbb{R}^{2}}{ {\delta u}_{i} \widetilde{u}_{j} \partial_{i} \Lambda^{-1}
\delta u_{j}\,dx} \nonumber\\&+2\int_{\mathbb{R}^{2}}{ \Lambda \delta\eta  \Lambda^{-1}\delta
u_{2}\,dx}\nonumber\\& \leq
C(\|\widehat{u}\|_{L^{\infty}}+\|\widetilde{u}\|_{L^{\infty}})\|\delta u\|_{L^{2}}\|\partial_{i}\Lambda^{-1}
\delta u\|_{L^{2}}+C\|\Lambda^{\frac{1}{2}}\delta\eta\|_{L^{2}}
\|\Lambda^{-\frac{1}{2}}\delta u\|_{L^{2}}
\nonumber\\& \leq
C(1+\|\widehat{u}\|_{L^{\infty}}+\|\widetilde{u}\|_{L^{\infty}})(\|\delta u\|_{L^{2}}^{2}+\|\Lambda^{\frac{1}{2}}\delta\eta\|_{L^{2}}^{2}+
\|\Lambda^{-\frac{1}{2}}\delta u\|_{L^{2}}^{2}),\nonumber
\end{align}
which along with \eqref{vkdxza11} implies
\begin{align} \label{vkdxza12}
&\frac{d}{dt}(\|\Lambda^{\frac{1}{2}}\delta\eta(t)\|_{L^{2}}^{2}+\|\delta u(t)\|_{L^{2}}^{2}+\|\Lambda^{-\frac{1}{2}}\delta u(t)\|_{L^{2}}^{2})
+\|\Lambda \delta\eta \|_{L^{2}}^{2}\nonumber\\&\leq CA_{1}+Cr(\|\widetilde{\omega}\|_{L^{2}}+\|\widetilde{\omega}\|_{L^{\infty}}) \|\delta u\|_{L^{2}}^{\frac{2r-2}{r}}
 \|\delta u\|_{L^{\infty}}^{\frac{2}{r}}\nonumber\\&\quad
 +C(1+\|\widehat{\omega}\|_{L^{6}}^{\frac{3}{2}}
+\|\widehat{\omega}\|_{L^{\frac{12}{5}}}^{6}+\|\widetilde{\theta}\|_{L^{2}}^{2}
+\|\widetilde{\theta}\|_{B_{r,1}^{\sigma+1}}
+\|\widehat{u}\|_{L^{\infty}}+\|\widetilde{u}\|_{L^{\infty}})\nonumber\\ &\quad \quad\times
(\|\Lambda^{\frac{1}{2}}\delta\eta\|_{L^{2}}^{2}+\|\delta u\|_{L^{2}}^{2}+
\|\Lambda^{-\frac{1}{2}}\delta u\|_{L^{2}}^{2}).
\end{align}
Let us now move to the term $A_{1}$, which can be easily bounded by
\begin{align}\label{vkdxza13}
\left|A_{1}\right| &
=\left|\int_{\mathbb{R}^{2}}{\Lambda^{-\frac{1}{2}}\delta u_{2}
\,\Lambda^{\frac{1}{2}}\delta\eta\,dx}\right|
\nonumber\\& \leq
C\|\Lambda^{-\frac{1}{2}}\delta u\|_{L^{2}}\|\Lambda^{\frac{1}{2}}\delta \eta\|_{L^{2}}\nonumber\\
&\leq C(\|\Lambda^{-\frac{1}{2}}\delta u\|_{L^{2}}^{2}+
\|\Lambda^{\frac{1}{2}}\delta \eta\|_{L^{2}}^{2}).
\end{align}
Inserting \eqref{vkdxza13} into \eqref{vkdxza12}, we finally get
\begin{align} \label{vkdxza14}
&\frac{d}{dt}(\|\Lambda^{\frac{1}{2}}\delta\eta(t)\|_{L^{2}}^{2}+\|\delta u(t)\|_{L^{2}}^{2}+\|\Lambda^{-\frac{1}{2}}\delta u(t)\|_{L^{2}}^{2})
\nonumber\\&\leq Cr(\|\widetilde{\omega}\|_{L^{2}}+\|\widetilde{\omega}\|_{L^{\infty}}) \|\delta u\|_{L^{2}}^{\frac{2r-2}{r}}
 \|\delta u\|_{L^{\infty}}^{\frac{2}{r}}\nonumber\\&\quad
 +L(t)
(\|\Lambda^{\frac{1}{2}}\delta\eta\|_{L^{2}}^{2}+\|\delta u\|_{L^{2}}^{2}+
\|\Lambda^{-\frac{1}{2}}\delta u\|_{L^{2}}^{2}),
\end{align}
where $L(t)$ is given by
$$L(t)\triangleq C\left(1+\|\widehat{\omega}\|_{L^{6}}^{\frac{3}{2}}
+\|\widehat{\omega}\|_{L^{\frac{12}{5}}}^{6}+\|\widetilde{\theta}\|_{L^{2}}^{2}
+\|\widetilde{\theta}\|_{B_{r,1}^{\sigma+1}}
+\|\widehat{u}\|_{L^{\infty}}+\|\widetilde{u}\|_{L^{\infty}}\right)(t).$$
To simplify our presentation, we denote
$$ X_{\epsilon}(t)\triangleq \|\Lambda^{\frac{1}{2}}\delta\eta(t)\|_{L^{2}}^{2}+\|\delta u(t)\|_{L^{2}}^{2}+\|\Lambda^{-\frac{1}{2}}\delta u(t)\|_{L^{2}}^{2}+\epsilon,$$
where $\epsilon>0$ is a small parameter.
It thus follows from \eqref{vkdxza14} that
\begin{align}
  \frac{d}{dt}X_{\epsilon}(t) \leq L(t)X_{\epsilon}(t)+ Cr(\|\widetilde{\omega}\|_{L^{2}}+\|\widetilde{\omega}\|_{L^{\infty}})
 \|\delta u\|_{L^{\infty}}^{\frac{2}{r}}X_{\epsilon}(t)^{\frac{r-1}{r}},\nonumber
\end{align}
which yields
\begin{align} \label{tdkp327}
  \frac{d}{dt}\left(e^{-\int_{0}^{t}L(\tau)\,d\tau}X_{\epsilon}(t)\right) \leq & Cr(\|\widetilde{\omega}\|_{L^{2}}+\|\widetilde{\omega}\|_{L^{\infty}})
 \|\delta u\|_{L^{\infty}}^{\frac{2}{r}}
 X_{\epsilon}(t)^{\frac{r-1}{r}}e^{-\int_{0}^{t}L(\tau)\,d\tau}.
\end{align}
Setting
$$Y_{\epsilon}(t)\triangleq e^{-\int_{0}^{t}L(\tau)\,d\tau}X_{\epsilon}(t),$$
we thus get from \eqref{tdkp327} that
\begin{align}
  \frac{d}{dt}Y_{\epsilon}(t) \leq  Cr(\|\widetilde{\omega}\|_{L^{2}}+\|\widetilde{\omega}\|_{L^{\infty}})
 \|\delta u\|_{L^{\infty}}^{\frac{2}{r}}
 Y_{\epsilon}(t)^{\frac{r-1}{r}}e^{-\frac{1}{r}\int_{0}^{t}L(\tau)\,d\tau}\nonumber
\end{align}
or
\begin{align}
  Y_{\epsilon}(t)^{\frac{1}{r}-1}\frac{d}{dt}Y_{\epsilon}(t) \leq  Cr(\|\widetilde{\omega}\|_{L^{2}}+\|\widetilde{\omega}\|_{L^{\infty}})
 \|\delta u\|_{L^{\infty}}^{\frac{2}{r}}
 e^{-\frac{1}{r}\int_{0}^{t}L(\tau)\,d\tau}.\nonumber
\end{align}
Integrating it in time implies
\begin{align}
  Y_{\epsilon}(t)\leq  \left(\epsilon^{\frac{1}{r}}+ C\int_{0}^{t}(\|\widetilde{\omega}(\varrho)\|_{L^{2}}+\|\widetilde{\omega}(\varrho)
  \|_{L^{\infty}})
 \|\delta u(\varrho)\|_{L^{\infty}}^{\frac{2}{r}}
 e^{-\frac{1}{r}\int_{0}^{\varrho}L(\tau)\,d\tau}\,d\varrho\right)^{r}.\nonumber
\end{align}
Letting $\epsilon\rightarrow0$, we achieve
\begin{align}  \label{tdkp328}
&\|\Lambda^{\frac{1}{2}}\delta\eta(t)\|_{L^{2}}^{2}+\|\delta u(t)\|_{L^{2}}^{2}+\|\Lambda^{-\frac{1}{2}}\delta u(t)\|_{L^{2}}^{2}\nonumber\\ &\leq e^{\int_{0}^{t}L(\tau)\,d\tau}\left( C\int_{0}^{t}(\|\widetilde{\omega}(\varrho)\|_{L^{2}}+\|\widetilde{\omega}(\varrho)
  \|_{L^{\infty}})
 \|\delta u(\varrho)\|_{L^{\infty}}^{\frac{2}{r}}
 \,d\varrho\right)^{r}\nonumber\\
&\leq e^{\int_{0}^{t}L(\tau)\,d\tau} \|\delta u\|_{L_{t}^{\infty}L^{\infty}}^{2} \left( C\int_{0}^{t}(\|\widetilde{\omega}(\varrho)\|_{L^{2}}+\|\widetilde{\omega}(\varrho)
  \|_{L^{\infty}})
 \,d\varrho\right)^{r}.
 \end{align}
Bearing in mind all the above obtained estimates, it is not hard to check that $L(t)$ belongs to $L_{loc}^{1}(\mathbb{R}_{+})$.
Due to \eqref{tt5add01} and \eqref{yyfvxaq226}, we are able to show that there exists a positive time $\widetilde{T}$ such that $$C\int_{0}^{\widetilde{T}}(\|\widetilde{\omega}(\varrho)\|_{L^{2}}+\|\widetilde{\omega}(\varrho)
  \|_{L^{\infty}})
 \,d\varrho<1.$$
Letting $r\rightarrow\infty$ in \eqref{tdkp328} entails
that $\delta u=0,\,\Lambda^{\frac{1}{2}}\delta\eta=0$ on $[0,\widetilde{T}]$. Notice that $\delta u$ and $\Lambda^{\frac{1}{2}}\delta\eta$ are continuous in time with values in $L^{2}$, by means of a standard connectivity argument, we may conclude that $\delta u=0,\,\Lambda^{\frac{1}{2}}\delta\eta=0$ on $\mathbb{R}_{+}$. As $\delta\theta=\Lambda \delta\eta=\Lambda^{\frac{1}{2}}(\Lambda^{\frac{1}{2}}\delta\eta)$, we thus have $\delta\theta=0$ on $\mathbb{R}_{+}$.
Consequently, we complete the proof of Theorem \ref{Th1}.

\vskip .3in
\appendix
\section{ The system (\ref{BEBou}) with regular initial data} \label{appSec2}
In this appendix, we consider the case of the system (\ref{BEBou}) with regular initial data. More precisely, we have the following result.
\begin{thm}\label{addTh4}
Let $u_{0}$ be a divergence-free vector field and $u_{0}\in H^{s}(\mathbb{R}^{2})$, $\theta_{0}\in H^{\widetilde{s}}(\mathbb{R}^{2})$ with $|s-\widetilde{s}|\leq\frac{1}{2}$ and $s>2$. Then (\ref{BEBou}) has a unique global solution $(u,\theta)$ such that
$$u\in \mathcal{C}(\mathbb{R}_{+}; H^{s}(\mathbb{R}^{2})),\quad \theta\in \mathcal{C}(\mathbb{R}_{+}; H^{\widetilde{s}}(\mathbb{R}^{2})) \cap L_{loc}^{2}(\mathbb{R}_{+}; H^{\widetilde{s}+\frac{1}{2}}(\mathbb{R}^{2})).$$
\end{thm}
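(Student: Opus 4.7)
The plan is to bootstrap the Yudovich-type theory of Theorem \ref{Th1} to propagate higher Sobolev regularity. First I would verify that the hypotheses of Theorem \ref{Th1} are satisfied: since $s>2$ and $|s-\widetilde s|\le \tfrac12$, we have $\widetilde s>3/2$, and in $\mathbb R^2$ the Sobolev embedding yields $\omega_0\in H^{s-1}\hookrightarrow L^2\cap L^\infty$, $\theta_0\in H^{\widetilde s}\hookrightarrow L^\infty\hookrightarrow \mathbb{L}(\mathbb R^2)$, and $\mathcal R_1\theta_0\in L^\infty$ by boundedness of the Riesz transform on $H^{\widetilde s}$ together with Sobolev embedding. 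Hence Theorem \ref{Th1} produces a global weak solution with $\omega\in L_{loc}^\infty(\mathbb R_+;L^\infty)$. What remains is to show that higher Sobolev regularity is preserved.

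The heart of the argument is a coupled $H^s\times H^{\widetilde s}$ energy estimate. I would apply $\Lambda^s$ to $\eqref{BEBou}_1$, test with $\Lambda^s u$, apply $\Lambda^{\widetilde s}$ to $\eqref{BEBou}_2$, test with $\Lambda^{\widetilde s}\theta$, and add. Using $\nabla\cdot u=0$, the convective terms reduce to commutators which are controlled by Kato--Ponce \eqref{cmoi78ye1}:
\begin{align*}
\bigl|\langle [\Lambda^s,u\cdot\nabla]u,\Lambda^s u\rangle\bigr|
&\le C\|\nabla u\|_{L^\infty}\|u\|_{H^s}^2,\\
\bigl|\langle [\Lambda^{\widetilde s},u\cdot\nabla]\theta,\Lambda^{\widetilde s}\theta\rangle\bigr|
&\le C\|\nabla u\|_{L^\infty}\|\theta\|_{H^{\widetilde s}}^2
+C\|\nabla\theta\|_{L^\infty}\|u\|_{H^{\widetilde s}}\|\theta\|_{H^{\widetilde s}}.
\end{align*}
Here $\|\nabla\theta\|_{L^\infty}$ is controlled by Sobolev embedding once $\widetilde s>2$, or interpolated against the dissipation term below, while $\|u\|_{H^{\widetilde s}}\le C(1+\|u\|_{H^s})$ since $\widetilde s\le s+\tfrac12$ and lower norms are controlled by \eqref{yydkl027} together with $\|\omega\|_{L^\infty}$. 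The key coupling terms $\int\Lambda^s\theta\,\Lambda^s u_2\,dx$ and $\int\Lambda^{\widetilde s}u_2\,\Lambda^{\widetilde s}\theta\,dx$ are handled by shifting half a derivative via Plancherel: since $|s-\widetilde s|\le \tfrac12$, both reduce to $\|u\|_{H^s}\|\Lambda^{\widetilde s+\frac12}\theta\|_{L^2}$, which is absorbed by the dissipation $\|\Lambda^{\widetilde s+\frac12}\theta\|_{L^2}^2$ arising from $\Lambda\theta$ plus a multiple of $\|u\|_{H^s}^2$.

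Putting everything together yields a differential inequality of the form
\begin{equation*}
\frac{d}{dt}\bigl(\|u\|_{H^s}^2+\|\theta\|_{H^{\widetilde s}}^2\bigr)
+\tfrac12\|\Lambda^{\widetilde s+\frac12}\theta\|_{L^2}^2
\le C\bigl(1+\|\nabla u\|_{L^\infty}\bigr)\bigl(1+\|u\|_{H^s}^2+\|\theta\|_{H^{\widetilde s}}^2\bigr).
\end{equation*}
To close this I would invoke the standard logarithmic Sobolev (Beale--Kato--Majda type) inequality valid for $s>2$:
\begin{equation*}
\|\nabla u\|_{L^\infty}\le C\bigl(1+\|\omega\|_{L^\infty}\ln(e+\|u\|_{H^s})\bigr).
\end{equation*}
Since Proposition \ref{Gsds3} already provides $\|\omega(t)\|_{L^\infty}\le C(t,u_0,\theta_0)\in L^\infty_{loc}(\mathbb R_+)$, the Osgood/logarithmic Gronwall inequality (or even Lemma \ref{Lem01khj}) yields the global bound on $\|u(t)\|_{H^s}^2+\|\theta(t)\|_{H^{\widetilde s}}^2$, together with $\theta\in L^2_{loc}(\mathbb R_+;H^{\widetilde s+\frac12})$. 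Time continuity of $u$ in $H^s$ and of $\theta$ in $H^{\widetilde s}$ then follows by the same Littlewood--Paley splitting argument used at the end of Section \ref{sectt011}, applied with high-frequency tails controlled by the uniform bound and low-frequency tails estimated through the equations. Uniqueness is already contained in Theorem \ref{Th1}.

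The main obstacle I anticipate is the coupling term $\int\Lambda^{\widetilde s}u_2\,\Lambda^{\widetilde s}\theta\,dx$ (and its twin in the $u$-estimate): without dissipation on $u$, one cannot treat $u$ and $\theta$ symmetrically, and the allowed regularity gap $|s-\widetilde s|\le \tfrac12$ is precisely what makes the dissipation $\|\Lambda^{\widetilde s+\frac12}\theta\|_{L^2}^2$ strong enough to absorb the unfavorable $H^s$ derivative on $u_2$ after a Plancherel shift. Verifying this balance, and keeping the commutator with $\nabla\theta$ under control via interpolation with the $\theta$-dissipation, is the delicate part of the calculation.
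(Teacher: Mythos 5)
Your overall architecture coincides with the paper's: a coupled $H^{s}\times H^{\widetilde{s}}$ energy estimate, the half-derivative (Plancherel) shift on the linear coupling terms using $|s-\widetilde{s}|\leq\frac{1}{2}$ to absorb them into the dissipation $\|\Lambda^{\widetilde{s}+\frac{1}{2}}\theta\|_{L^{2}}^{2}$, the Beale--Kato--Majda logarithmic inequality fed by the a priori bound $\|\omega\|_{L^{\infty}}\in L^{\infty}_{loc}$, and closure by the refined logarithmic Gronwall lemma. However, there are two genuine gaps in your treatment of the transport term in the $\theta$-equation. First, your claim that $\|u\|_{H^{\widetilde{s}}}\leq C(1+\|u\|_{H^{s}})$ ``since $\widetilde{s}\leq s+\frac{1}{2}$'' is false in the regime $s<\widetilde{s}\leq s+\frac{1}{2}$, which the hypothesis permits: there $H^{\widetilde{s}}$ is \emph{stronger} than $H^{s}$. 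A single Kato--Ponce commutator $[\Lambda^{\widetilde{s}},u\cdot\nabla]\theta$ via \eqref{cmoi78ye1} produces the term $\|\Lambda^{\widetilde{s}}u\|_{L^{p_{3}}}\|\nabla\theta\|_{L^{p_{4}}}$, which is not controlled by $\|u\|_{H^{s}}$. The paper circumvents this by first moving $\Lambda^{\frac{1}{2}}$ onto $\theta$ (paying with the dissipation) and then commutating twice, at the levels $\widetilde{s}-\frac{1}{2}\leq s$ and $\frac{1}{2}$; the condition $\widetilde{s}\leq s+\frac{1}{2}$ is needed for the \emph{nonlinear} term as well, not only for the linear coupling terms you flag as the main obstacle.

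Second, your control of $\|\nabla\theta\|_{L^{\infty}}$ is not justified. Sobolev embedding requires $\widetilde{s}>2$, whereas the hypotheses only guarantee $\widetilde{s}>\frac{3}{2}$ (take $s=2.1$, $\widetilde{s}=1.7$). The alternative you mention, interpolating $\|\nabla\theta\|_{L^{\infty}}\lesssim\|\theta\|_{L^{2}}^{1-a}\|\Lambda^{\widetilde{s}+\frac{1}{2}}\theta\|_{L^{2}}^{a}$ with $a=\frac{2}{\widetilde{s}+\frac{1}{2}}$ and absorbing into the dissipation by Young's inequality, leaves a power $\bigl(\|u\|_{H^{s}}^{2}+\|\theta\|_{H^{\widetilde{s}}}^{2}\bigr)^{\frac{2}{2-a}}$ with $\frac{2}{2-a}>1$, so the resulting differential inequality is superlinear and Gronwall gives only a local-in-time bound. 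The paper's proof inserts an intermediate step that you omit: it first proves $\|\Lambda^{\frac{3}{2}}\theta(t)\|_{L^{2}}^{2}+\int_{0}^{t}\|\Delta\theta\|_{L^{2}}^{2}\,d\tau\leq C(t)$ (using $\|\omega\|_{L^{3}}^{3}\in L^{1}_{loc}$, available from the $L^{\infty}$ vorticity bound), and then uses the logarithmic inequality $\|\nabla\theta\|_{L^{\infty}}\leq C\bigl(1+\|\theta\|_{L^{2}}+\|\Delta\theta\|_{L^{2}}\sqrt{\ln(e+\|\Lambda^{\sigma}\theta\|_{L^{2}})}\bigr)$ together with Lemma \ref{Lem01khj}, for which the $L^{1}_{loc}$ integrability of $\|\Delta\theta\|_{L^{2}}^{2}$ is the essential input. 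Without this intermediate $H^{\frac{3}{2}}$--$H^{2}$ estimate your scheme does not close globally.
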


\begin{proof}
As the local well-posedness can be established via the standard approach, it suffices to show the required \emph{a priori} estimates. Moreover, due to the higher regular initial data, we choose not to apply the Littlewood-Paley decomposition to the $\theta$-equation itself but to use the interpolation inequalities and
the classical commutator estimates. Now let us sketch the proof.
To begin with, it follows from \eqref{dfpyt78} that
\begin{align}\label{etyp001}
 \frac{d}{dt}\|\nabla\theta(t)\|_{L^{2}}^{2}
+\|\Lambda^{\frac{3}{2}}\theta\|_{L^{2}}^{2}
&\leq C(1+\|\Lambda^{\frac{1}{2}}\theta\|_{L^{2}}^{2})(
\|\omega\|_{L^{4}}^{4}+\|\omega\|_{L^{2}}^{2}+
\|\nabla \theta\|_{L^{2}}^{2})\nonumber\\&\leq C(1+\|\Lambda^{\frac{1}{2}}\theta\|_{L^{2}}^{2})(
\|\Gamma\|_{L^{4}}^{4}+\|\theta\|_{L^{4}}^{4}+\|\Gamma\|_{L^{2}}^{2}+\|\theta\|_{L^{2}}^{2}+
\|\nabla \theta\|_{L^{2}}^{2})\nonumber\\&\leq C(1+\|\Lambda^{\frac{1}{2}}\theta\|_{L^{2}}^{2})(
\|\Gamma\|_{L^{4}}^{4}+\|\Gamma\|_{L^{2}}^{2}+
\|\nabla \theta\|_{L^{2}}^{2}).
\end{align}
Recalling \eqref{fzhpet17} and \eqref{vbjlwq6}, it is not hard to show
\begin{align}\label{etyp002}
 \frac{d}{dt}(\|\Gamma(t)\|_{L^{4}}^{4}+\|\Gamma(t)\|_{L^{2}}^{2})
\leq C(1+\|\theta\|_{L^{\infty}})(1+\|\Gamma\|_{L^{4}}^{4}+\|\Gamma\|_{L^{2}}^{2}).
\end{align}
Adding \eqref{etyp001} and \eqref{etyp002} together implies
\begin{align}\label{etyp003}
 &\frac{d}{dt}(\|\nabla\theta(t)\|_{L^{2}}^{2}+\|\Gamma(t)\|_{L^{4}}^{4}+\|\Gamma(t)\|_{L^{2}}^{2})
 +\|\Lambda^{\frac{3}{2}}\theta\|_{L^{2}}^{2}
\nonumber\\& \leq C(1+\|\Lambda^{\frac{1}{2}}\theta\|_{L^{2}}^{2}+\|\theta\|_{L^{\infty}})
(1+\|\Gamma\|_{L^{4}}^{4}+\|\Gamma\|_{L^{2}}^{2}+\|\nabla \theta\|_{L^{2}}^{2}).
\end{align}
Following the same lines in proving Proposition \ref{Pro5add1}, we deduce from \eqref{etyp003} that
$$\|\nabla\theta(t)\|_{L^{2}}^{2}+\|\Gamma(t)\|_{L^{4}}^{4}+\|\Gamma(t)\|_{L^{2}}^{2}
+\int_{0}^{t}{\|\Lambda^{\frac{3}{2}}\theta(\tau)\|_{L^{2}}^{2}\,d\tau}\leq C(t,\,u_{0},\,\theta_{0}),$$
which yields
\begin{align}\label{etyp004}
\|\nabla\theta(t)\|_{L^{2}}^{2}+\|\omega(t)\|_{L^{4}}^{4}+\|\omega(t)\|_{L^{2}}^{2}
+\int_{0}^{t}{\|\Lambda^{\frac{3}{2}}\theta(\tau)\|_{L^{2}}^{2}\,d\tau}\leq C(t,\,u_{0},\,\theta_{0}).
\end{align}
Keeping in mind \eqref{etyp004}, one derives from Proposition \ref{Gsds3} that
\begin{align}\label{etyp005}
\|\omega(t)\|_{L^{\infty}}
\leq C(t,\,u_{0},\,\theta_{0}).
\end{align}
It thus follows from \eqref{etyp004} and \eqref{etyp005} that
\begin{align}\label{tdkp329}
\|\omega(t)\|_{L^{2}}+\|\omega(t)\|_{L^{\infty}}
\leq C(t,\,u_{0},\,\theta_{0}).
\end{align}
Due to $|s-\widetilde{s}|\leq\frac{1}{2}$ and $s>2$, one verifies that $\widetilde{s}>\frac{3}{2}$. Therefore, we next claim
\begin{eqnarray}\label{yrtyp03}
\|\Lambda^{\frac{3}{2}}\theta(t)\|_{L^{2}}^{2}+ \int_{0}^{t}{\|\Delta\theta(\tau)\|_{L^{2}}^{2}\,d\tau}\leq C(t,\,u_{0},\,\theta_{0}).
\end{eqnarray}
To prove \eqref{yrtyp03}, we get by applying $\Lambda^{\frac{3}{2}}$ to $(\ref{BEBou})_{2}$ and multiplying it by $\Lambda^{\frac{3}{2}}\theta$ that
\begin{align}\label{yrtyp01}
\frac{1}{2}\frac{d}{dt}\|\Lambda^{\frac{3}{2}}\theta(t)\|_{L^{2}}^{2}
+\|\Delta\theta\|_{L^{2}}^{2}=-\int_{\mathbb{R}^{2}}
\Lambda^{\frac{3}{2}}\big(u \cdot
\nabla\theta\big)\Lambda^{\frac{3}{2}}\theta\,dx+\int_{\mathbb{R}^{2}}
\Lambda^{\frac{3}{2}}u_{2}\Lambda^{\frac{3}{2}}\theta\,dx.
\end{align}
Obviously, one has
\begin{align}\label{yghp854}
\left|\int_{\mathbb{R}^{2}}
\Lambda^{\frac{3}{2}}u_{2}\Lambda^{\frac{3}{2}}\theta\,dx\right|&\leq C\|\Lambda u\|_{L^{2}}\|\Delta\theta\|_{L^{2}}\nonumber\\&\leq\frac{1}{4}
\|\Delta\theta\|_{L^{2}}^{2}+C\|\omega\|_{L^{2}}^{2}.
\end{align}
By easy computations, we get
\begin{align}
\int_{\mathbb{R}^{2}}
\Lambda^{\frac{3}{2}}\big(u \cdot
\nabla\theta\big)\Lambda^{\frac{3}{2}}\theta\,dx&=\int_{\mathbb{R}^{2}}
\Lambda^{\frac{3}{2}}(u_{i} \partial_{i}\theta)\Lambda^{\frac{3}{2}}\theta\,dx
\nonumber\\&=\int_{\mathbb{R}^{2}}
\Big(\Lambda^{\frac{3}{2}}(u_{i} \partial_{i}\theta)-\Lambda^{\frac{3}{2}}u_{i} \partial_{i}\theta-u_{i}\Lambda^{\frac{3}{2}} \partial_{i}\theta\Big)\Lambda^{\frac{3}{2}}\theta\,dx\nonumber\\&\quad
+\int_{\mathbb{R}^{2}}\Lambda^{\frac{3}{2}}u_{i} \partial_{i}\theta\Lambda^{\frac{3}{2}}\theta\,dx+
\int_{\mathbb{R}^{2}}u_{i}\Lambda^{\frac{3}{2}} \partial_{i}\theta\Lambda^{\frac{3}{2}}\theta\,dx
\nonumber\\&=\int_{\mathbb{R}^{2}}
\Big(\Lambda^{\frac{3}{2}}(u_{i} \partial_{i}\theta)-\Lambda^{\frac{3}{2}}u_{i} \partial_{i}\theta-u_{i}\Lambda^{\frac{3}{2}} \partial_{i}\theta\Big)\Lambda^{\frac{3}{2}}\theta\,dx\nonumber\\&\quad
+\int_{\mathbb{R}^{2}}\Lambda^{\frac{3}{2}}u_{i} \partial_{i}\theta\Lambda^{\frac{3}{2}}\theta\,dx
\nonumber\\&\triangleq \Gamma_{1}+\Gamma_{2}.\nonumber
\end{align}
To bound $\Gamma_{1}$, we recall the following commutator estimate (see \cite[Corollary 1.1]{Fujiwara})
\begin{eqnarray}\label{tdfhklsd2}
\|\Lambda^{s}(fg)-g\Lambda^{s}f-f\Lambda^{s}g\|_{L^{p}}\leq C\|\Lambda^{s_{1}} f\|_{L^{r_{1}}}\|\Lambda^{s_{2}}g\|_{L^{r_{2}}},
\end{eqnarray}
where $s=s_{1}+s_{2}$ with $0\leq s_{1},\,s_{2}\leq1$ and $p, r_{1}, r_{2}\in(1, \infty)$ such that $\frac{1}{p}=\frac{1}{r_{1}}+\frac{1}{r_{2}}$.
In view of \eqref{tdfhklsd2}, we may derive
\begin{align}
|\Gamma_{1}|&\leq C\|\Lambda^{\frac{3}{2}}(u_{i} \partial_{i}\theta)-\Lambda^{\frac{3}{2}}u_{i} \partial_{i}\theta-u_{i}\Lambda^{\frac{3}{2}} \partial_{i}\theta\|_{L^{\frac{3}{2}}}
\|\Lambda^{\frac{3}{2}}\theta\|_{L^{3}}\nonumber\\
&\leq C\|\Lambda u\|_{L^{3}}\|\Lambda^{\frac{1}{2}} \nabla\theta\|_{L^{3}}
\|\Lambda^{\frac{3}{2}}\theta\|_{L^{3}}
\nonumber\\
&\leq C\|\omega\|_{L^{3}}\|\Lambda^{\frac{3}{2}}\theta\|_{L^{3}}^{2}
\nonumber\\
&\leq C\|\omega\|_{L^{3}}\|\Lambda^{\frac{3}{2}}\theta\|_{L^{2}}^{\frac{2}{3}}
\|\Delta\theta\|_{L^{2}}^{\frac{4}{3}}
\nonumber\\
&\leq
\frac{1}{8}\|\Delta\theta\|_{L^{2}}^{2}+C\|\omega\|_{L^{3}}^{3}
\|\Lambda^{\frac{3}{2}}\theta\|_{L^{2}}^{2}.\nonumber
\end{align}
Regarding $\Gamma_{2}$, one gets by \eqref{cmoi78ye2r} that
\begin{align}
|\Gamma_{2}|
&\leq C\|\Lambda u\|_{L^{3}}\|\Lambda^{\frac{1}{2}} (\nabla\theta \Lambda^{\frac{3}{2}}\theta)\|_{L^{\frac{3}{2}}}
\nonumber\\
&\leq C\|\Lambda u\|_{L^{3}}(\|\Lambda^{\frac{1}{2}} \nabla\theta\|_{L^{3}}
\|\Lambda^{\frac{3}{2}}\theta\|_{L^{3}}+\| \nabla\theta\|_{L^{6}}
\|\Delta\theta\|_{L^{2}})
\nonumber\\
&\leq
C\|\omega\|_{L^{3}}(\|\Lambda^{\frac{3}{2}}\theta\|_{L^{3}}^{2}+\| \Lambda^{\frac{2}{3}} \nabla\theta\|_{L^{2}}
\|\Delta\theta\|_{L^{2}})
\nonumber\\
&\leq C\|\omega\|_{L^{3}}(\|\Lambda^{\frac{3}{2}}\theta\|_{L^{2}}^{\frac{2}{3}}
\|\Delta\theta\|_{L^{2}}^{\frac{4}{3}}+\|\Lambda^{\frac{3}{2}}
\theta\|_{L^{2}}^{\frac{2}{3}}
\|\Delta\theta\|_{L^{2}}^{\frac{1}{3}}\|\Delta\theta\|_{L^{2}})
\nonumber\\
&\leq
\frac{1}{8}\|\Delta\theta\|_{L^{2}}^{2}+C\|\omega\|_{L^{3}}^{3}
\|\Lambda^{\frac{3}{2}}\theta\|_{L^{2}}^{2}.\nonumber
\end{align}
As a result, we have
\begin{align}\label{sdlpyt666}
\left|\int_{\mathbb{R}^{2}}
\Lambda^{\frac{3}{2}}\big(u \cdot
\nabla\theta\big)\Lambda^{\frac{3}{2}}\theta\,dx\right|\leq \frac{1}{4}\|\Delta\theta\|_{L^{2}}^{2}+C\|\omega\|_{L^{3}}^{3}
\|\Lambda^{\frac{3}{2}}\theta\|_{L^{2}}^{2}.
\end{align}
It should be pointed out that we are still able to deduce the above bound \eqref{sdlpyt666} without the use of \eqref{tdfhklsd2}.
Actually, invoking the easy computations and $\nabla\cdot u=0$, we get
\begin{align}
\int_{\mathbb{R}^{2}}
\Lambda^{\frac{3}{2}}\big(u \cdot
\nabla\theta\big)\Lambda^{\frac{3}{2}}\theta\,dx&=\int_{\mathbb{R}^{2}}
\Lambda^{\frac{3}{2}}(u_{i} \partial_{i}\theta)\Lambda^{\frac{3}{2}}\theta\,dx
\nonumber\\&=\int_{\mathbb{R}^{2}}
\Lambda (u_{i} \partial_{i}\theta)\Lambda^{2}\theta\,dx
\nonumber\\&=\int_{\mathbb{R}^{2}}
[\Lambda, u_{i}]\partial_{i}\theta \Lambda^{2}\theta\,dx+
\int_{\mathbb{R}^{2}}u_{i}\Lambda  \partial_{i}\theta\Lambda^{2}\theta\,dx
\nonumber\\&=\int_{\mathbb{R}^{2}}
[\Lambda, u_{i}]\partial_{i}\theta \Lambda^{2}\theta\,dx+
\int_{\mathbb{R}^{2}}\Lambda^{\frac{1}{2}}(u_{i}\Lambda  \partial_{i}\theta)\Lambda^{\frac{3}{2}}\theta\,dx
\nonumber\\&=\int_{\mathbb{R}^{2}}
[\Lambda, u_{i}]\partial_{i}\theta \Lambda^{2}\theta\,dx+
\int_{\mathbb{R}^{2}}[\Lambda^{\frac{1}{2}}, u_{i}]\Lambda  \partial_{i}\theta\Lambda^{\frac{3}{2}}\theta\,dx
\nonumber\\&\quad+
\int_{\mathbb{R}^{2}}u_{i}\Lambda^{\frac{1}{2}}\Lambda  \partial_{i}\theta\Lambda^{\frac{3}{2}}\theta\,dx
\nonumber\\&=\int_{\mathbb{R}^{2}}
[\Lambda, u_{i}]\partial_{i}\theta \Lambda^{2}\theta\,dx+
\int_{\mathbb{R}^{2}}[\Lambda^{\frac{1}{2}}, u_{i}]\Lambda  \partial_{i}\theta\Lambda^{\frac{3}{2}}\theta\,dx
\nonumber\\&\quad+
\int_{\mathbb{R}^{2}}u_{i}\partial_{i}\Lambda^{\frac{3}{2}}\theta\Lambda^{\frac{3}{2}}\theta\,dx
\nonumber\\&=\int_{\mathbb{R}^{2}}
[\Lambda, u_{i}]\partial_{i}\theta \Lambda^{2}\theta\,dx+
\int_{\mathbb{R}^{2}}[\Lambda^{\frac{1}{2}}, u_{i}]\Lambda  \partial_{i}\theta\Lambda^{\frac{3}{2}}\theta\,dx
\nonumber\\&\triangleq \widetilde{\Gamma}_{1}+\widetilde{\Gamma}_{2}.\nonumber
\end{align}
Taking advantage of \eqref{cmoi78ye1}, it yields
\begin{align}
|\widetilde{\Gamma}_{1}|&\leq C\|[\Lambda, u_{i}]\partial_{i}\theta\|_{L^{2}}
\|\Lambda^{2}\theta\|_{L^{2}}\nonumber\\
&\leq C(\|\nabla u\|_{L^{3}}\|\Lambda\theta\|_{L^{6}}+\|\nabla\theta\|_{L^{6}}\|\Lambda u\|_{L^{3}})
\|\Lambda^{2}\theta\|_{L^{2}}
\nonumber\\
&\leq C\|\omega\|_{L^{3}}\|\Lambda \theta\|_{L^{6}} \|\Delta\theta\|_{L^{2}}
\nonumber\\
&\leq C\|\omega\|_{L^{3}}\|\Lambda^{\frac{3}{2}}\theta\|_{L^{2}}^{\frac{2}{3}}
\|\Delta\theta\|_{L^{2}}^{\frac{1}{3}} \|\Delta\theta\|_{L^{2}}
\nonumber\\
&\leq
\frac{1}{8}\|\Delta\theta\|_{L^{2}}^{2}+C\|\omega\|_{L^{3}}^{3}
\|\Lambda^{\frac{3}{2}}\theta\|_{L^{2}}^{2}.\nonumber
\end{align}
Again, it follows from \eqref{cmoi78ye1} that
\begin{align}
|\widetilde{\Gamma}_{2}|
&\leq C\|[\Lambda^{\frac{1}{2}}, u_{i}]\Lambda  \partial_{i}\theta\|_{L^{\frac{12}{7}}}\|\Lambda^{\frac{3}{2}} \theta\|_{L^{\frac{12}{5}}}
\nonumber\\
&\leq C(\|\nabla u\|_{L^{3}}\|\Lambda^{\frac{3}{2}} \theta\|_{L^{4}}+\|\Lambda\nabla\theta\|_{L^{2}}\|\Lambda^{\frac{1}{2}}u\|_{L^{12}})
\|\Lambda^{\frac{3}{2}} \theta\|_{L^{\frac{12}{5}}}
\nonumber\\
&\leq C(\|\omega\|_{L^{3}}\|\Delta \theta\|_{L^{2}}+\|\Delta\theta\|_{L^{2}}\|\Lambda u\|_{L^{3}})
\|\Lambda^{\frac{3}{2}}\theta\|_{L^{2}}^{\frac{2}{3}}
\|\Delta\theta\|_{L^{2}}^{\frac{1}{3}}
\nonumber\\
&\leq C\|\omega\|_{L^{3}}\|\Delta \theta\|_{L^{2}}
\|\Lambda^{\frac{3}{2}}\theta\|_{L^{2}}^{\frac{2}{3}}
\|\Delta\theta\|_{L^{2}}^{\frac{1}{3}}
\nonumber\\
&\leq
\frac{1}{8}\|\Delta\theta\|_{L^{2}}^{2}+C\|\omega\|_{L^{3}}^{3}
\|\Lambda^{\frac{3}{2}}\theta\|_{L^{2}}^{2}.\nonumber
\end{align}
Consequently, just using \eqref{cmoi78ye1}, we still obtain the desired bound \eqref{sdlpyt666}.
Putting the above estimates \eqref{yghp854} and \eqref{sdlpyt666} into \eqref{yrtyp01}, it yields
\begin{align}\label{yrtyp02}
 \frac{d}{dt}\|\Lambda^{\frac{3}{2}}\theta(t)\|_{L^{2}}^{2}
+\|\Delta\theta\|_{L^{2}}^{2}\leq C\|\omega\|_{L^{3}}^{3}
\|\Lambda^{\frac{3}{2}}\theta\|_{L^{2}}^{2}+C\|\omega\|_{L^{2}}^{2}.
\end{align}
Making use of \eqref{tdkp329} and applying the Gronwall inequality to \eqref{yrtyp02}, we thus conclude the desired bound \eqref{yrtyp03}. With \eqref{tdkp329} and \eqref{yrtyp03} in hand, we are ready to prove Theorem \ref{addTh4}. To this end, applying $(\Lambda^{s},\Lambda^{\widetilde{s}})$ to $((\ref{BEBou})_{1},(\ref{BEBou})_{2})$ and taking $L^{2}$ inner product with $(\Lambda^{s}u,\Lambda^{\widetilde{s}}\theta)$, we obtain
\begin{align}\label{bhgfty68d}
&\frac{1}{2}\frac{d}{dt}(\|\Lambda^{s}u(t)\|_{L^{2}}^{2}
+\|\Lambda^{\widetilde{s}}\theta(t)\|_{L^{2}}^{2}) +\|\Lambda^{\widetilde{s}+\frac{1}{2}}\theta\|_{L^{2}}^{2}\nonumber\\&= \int_{\mathbb{R}^{2}}{\Lambda^{s}(\theta e_{2})\cdot
\Lambda^{s}u\,dx}+\int_{\mathbb{R}^{2}}{\Lambda^{\widetilde{s}}u_{2}
\Lambda^{\widetilde{s}}\theta\,dx}\nonumber\\&\quad-
\int_{\mathbb{R}^{2}}{\Lambda^{\widetilde{s}}(u\cdot\nabla\theta)
\Lambda^{\widetilde{s}}\theta\,dx}  -\int_{\mathbb{R}^{2}}{\Lambda^{s}(
u\cdot\nabla u)\cdot \Lambda^{s}u\,dx}.
\end{align}
We recall the basic $L^2$-energy estimate \eqref{ghvber68}, namely,
\begin{align}\label{bhgfty68f}
\frac{1}{2}\frac{d}{dt}(\|u(t)\|_{L^{2}}^{2}
+\|\theta(t)\|_{L^{2}}^{2}) + \|\Lambda^{ \frac{1}{2}}\theta\|_{L^2}^{2}\leq \|u\|_{L^{2}}^{2}+\|\theta\|_{L^{2}}^{2}.
\end{align}
It follows by combining \eqref{bhgfty68d} and \eqref{bhgfty68f} that
\begin{align}\label{tdkp330}
&\frac{1}{2}\frac{d}{dt}(\|u(t)\|_{H^{s}}^{2}
+\|\theta(t)\|_{H^{\widetilde{s}}}^{2}) + \|\Lambda^{\frac{1}{2}}\theta\|_{H^{\widetilde{s}}}^{2}\nonumber\\&\leq \int_{\mathbb{R}^{2}}{\Lambda^{s}(\theta e_{2})\cdot
\Lambda^{s}u\,dx}+\int_{\mathbb{R}^{2}}{\Lambda^{\widetilde{s}}u_{2}
\Lambda^{\widetilde{s}}\theta\,dx}-
\int_{\mathbb{R}^{2}}{\Lambda^{\widetilde{s}}(u\cdot\nabla\theta)
\Lambda^{\widetilde{s}}\theta\,dx} \nonumber\\&\quad -\int_{\mathbb{R}^{2}}{\Lambda^{s}(
u\cdot\nabla u)\cdot \Lambda^{s}u\,dx}+\|u\|_{L^{2}}^{2}+\|\theta\|_{L^{2}}^{2}.
\end{align}
In order to bound the first two terms at the right hand side of \eqref{tdkp330}, we need the restriction $|s-\widetilde{s}|\leq\frac{1}{2}$.
In fact, according to the direct embedding, one has
\begin{align}
\left|\int_{\mathbb{R}^{2}}{\Lambda^{s}(\theta e_{2})\cdot
\Lambda^{s}u\,dx}\right|\leq & C\|\Lambda^{\widetilde{s}+\frac{1}{2}}\theta\|_{L^{2}}
\|\Lambda^{2s-\widetilde{s}-\frac{1}{2}}u\|_{L^2}
\nonumber\\
\leq & C\|\Lambda^{\frac{1}{2}}\theta\|_{H^{\widetilde{s}}}\|u\|_{H^{s}}
\nonumber\\
 \leq&\frac{1}{4}\|\Lambda^{\frac{1}{2}}\theta\|_{H^{\widetilde{s}}}^{2}
 +C\|u\|_{H^{s}}^{2},\nonumber
\end{align}
where $s$ and $\widetilde{s}$ should satisfy
\begin{align}\label{tupp011}
s-\frac{1}{2}\leq\widetilde{s}\leq 2s-\frac{1}{2}.
\end{align}
Similarly, we obtain
\begin{align}
\left|\int_{\mathbb{R}^{2}}{\Lambda^{\widetilde{s}}u_{2}
\Lambda^{\widetilde{s}}\theta\,dx}\right|\leq & C\|\Lambda^{\widetilde{s}+\frac{1}{2}}\theta\|_{L^{2}}
\|\Lambda^{\widetilde{s}-\frac{1}{2}}u\|_{L^2}
\nonumber\\
\leq & C\|\Lambda^{\frac{1}{2}}\theta\|_{H^{\widetilde{s}}}\|u\|_{H^{s}}
\nonumber\\
 \leq&\frac{1}{4}\|\Lambda^{\frac{1}{2}}\theta\|_{H^{\widetilde{s}}}^{2}
 +C\|u\|_{H^{s}}^{2},\nonumber
\end{align}
where $s$ and $\widetilde{s}$ should satisfy
\begin{align}\label{tupp022}
\frac{1}{2}\leq\widetilde{s}\leq s+\frac{1}{2}.
\end{align}
Thanks to $s>2$, combining \eqref{tupp011} and \eqref{tupp022} implies that $s$ and $\widetilde{s}$ should satisfy $|s-\widetilde{s}|\leq\frac{1}{2}$.
Due to $|s-\widetilde{s}|\leq\frac{1}{2}$, the third term at the right hand side of \eqref{tdkp330} is difficult to handle. To this end, we should rewrite it by $\nabla\cdot u=0$ that
\begin{align}
\int_{\mathbb{R}^{2}}{\Lambda^{\widetilde{s}}(u\cdot\nabla\theta)
\Lambda^{\widetilde{s}}\theta\,dx}
 =&\int_{\mathbb{R}^{2}}{\Lambda^{\widetilde{s}}(u_{i}\partial_{i}\theta)
\Lambda^{\widetilde{s}}\theta\,dx}
 \nonumber\\ =&\int_{\mathbb{R}^{2}}{\Lambda^{\widetilde{s}-\frac{1}{2}}(u_{i}\partial_{i}\theta)
\Lambda^{\widetilde{s}+\frac{1}{2}}\theta\,dx} \nonumber\\ =&\int_{\mathbb{R}^{2}}{[\Lambda^{\widetilde{s}-\frac{1}{2}},\,u_{i}]
\partial_{i}\theta
\Lambda^{\widetilde{s}+\frac{1}{2}}\theta\,dx}
+\int_{\mathbb{R}^{2}}{u_{i}\Lambda^{\widetilde{s}-\frac{1}{2}}
\partial_{i}\theta
\Lambda^{\widetilde{s}+\frac{1}{2}}\theta\,dx}
 \nonumber\\ =&\int_{\mathbb{R}^{2}}{[\Lambda^{\widetilde{s}-\frac{1}{2}},\,u_{i}]
\partial_{i}\theta
\Lambda^{\widetilde{s}+\frac{1}{2}}\theta\,dx}
+\int_{\mathbb{R}^{2}}{\Lambda^{\frac{1}{2}}(u_{i}\Lambda^{\widetilde{s}-\frac{1}{2}}
\partial_{i}\theta)
\Lambda^{\widetilde{s}}\theta\,dx}
 \nonumber\\ =&\int_{\mathbb{R}^{2}}{[\Lambda^{\widetilde{s}-\frac{1}{2}},\,u_{i}]
\partial_{i}\theta
\Lambda^{\widetilde{s}+\frac{1}{2}}\theta\,dx}
+\int_{\mathbb{R}^{2}}{[\Lambda^{\frac{1}{2}},\,u_{i}]\Lambda^{\widetilde{s}-\frac{1}{2}}
\partial_{i}\theta
\Lambda^{\widetilde{s}}\theta\,dx}\nonumber\\&
\int_{\mathbb{R}^{2}}{u_{i}\Lambda^{\frac{1}{2}}\Lambda^{\widetilde{s}-\frac{1}{2}}
\partial_{i}\theta
\Lambda^{\widetilde{s}}\theta\,dx}
\nonumber\\ =&\int_{\mathbb{R}^{2}}{[\Lambda^{\widetilde{s}-\frac{1}{2}},\,u_{i}]
\partial_{i}\theta
\Lambda^{\widetilde{s}+\frac{1}{2}}\theta\,dx}
+\int_{\mathbb{R}^{2}}{[\Lambda^{\frac{1}{2}},\,u_{i}]\Lambda^{\widetilde{s}-\frac{1}{2}}
\partial_{i}\theta
\Lambda^{\widetilde{s}}\theta\,dx}\nonumber\\&
+\int_{\mathbb{R}^{2}}{u_{i} \partial_{i}\Lambda^{\widetilde{s}}
\theta
\Lambda^{\widetilde{s}}\theta\,dx}\nonumber\\ =&\int_{\mathbb{R}^{2}}{[\Lambda^{\widetilde{s}-\frac{1}{2}},\,u_{i}]
\partial_{i}\theta
\Lambda^{\widetilde{s}+\frac{1}{2}}\theta\,dx}
+\int_{\mathbb{R}^{2}}{[\Lambda^{\frac{1}{2}},\,u_{i}]\Lambda^{\widetilde{s}-\frac{1}{2}}
\partial_{i}\theta
\Lambda^{\widetilde{s}}\theta\,dx}.\nonumber
\end{align}
This along with (\ref{cmoi78ye1}) implies
\begin{align}
\left|
\int_{\mathbb{R}^{2}}{\Lambda^{\widetilde{s}}(u\cdot\nabla\theta)
\Lambda^{\widetilde{s}}\theta\,dx}\right|
 \leq&
 \left|\int_{\mathbb{R}^{2}}{[\Lambda^{\widetilde{s}-\frac{1}{2}},\,u_{i}]
\partial_{i}\theta
\Lambda^{\widetilde{s}+\frac{1}{2}}\theta\,dx} \right|+ \left|\int_{\mathbb{R}^{2}}{[\Lambda^{\frac{1}{2}},\,u_{i}]\Lambda^{\widetilde{s}-\frac{1}{2}}
\partial_{i}\theta
\Lambda^{\widetilde{s}}\theta\,dx}\right|
 \nonumber\\ \leq&
\|[\Lambda^{\widetilde{s}-\frac{1}{2}},\,u_{i}]
\partial_{i}\theta\|_{L^{2}}
\|\Lambda^{\widetilde{s}+\frac{1}{2}}\theta\|_{L^{2}}
+\|[\Lambda^{\frac{1}{2}},\,u_{i}]\Lambda^{\widetilde{s}-\frac{1}{2}}
\partial_{i}\theta\|_{L^{\frac{3}{2}}}
\|\Lambda^{\widetilde{s}}\theta\|_{L^{3}}
\nonumber\\ \leq& C(\|\nabla u\|_{L^{4}}\| \Lambda^{\widetilde{s}-\frac{1}{2}}\theta\|_{L^{4}}+\|\nabla \theta\|_{L^{\infty}}\| \Lambda^{\widetilde{s}-\frac{1}{2}}u\|_{L^{2}})
\|\Lambda^{\widetilde{s}+\frac{1}{2}}\theta\|_{L^{2}}
 \nonumber\\& +
C(\|\nabla u\|_{L^{6}}\| \Lambda^{\widetilde{s}}\theta\|_{L^{2}}+\|\Lambda^{\widetilde{s}-\frac{1}{2}}
\partial_{i}\theta\|_{L^{2}}\| \Lambda^{\frac{1}{2}}u\|_{L^{6}})
\|\Lambda^{\widetilde{s}}\theta\|_{L^{3}}\nonumber\\ \leq& C(\|\omega\|_{L^{4}}\| \Lambda^{\widetilde{s}}\theta\|_{L^{2}}+\|\nabla \theta\|_{L^{\infty}}\| \Lambda^{\widetilde{s}-\frac{1}{2}}u\|_{L^{2}})
\|\Lambda^{\widetilde{s}+\frac{1}{2}}\theta\|_{L^{2}}
 \nonumber\\& +
C(\|\omega\|_{L^{6}}\| \Lambda^{\widetilde{s}}\theta\|_{L^{2}}+\|\Lambda^{\widetilde{s}+\frac{1}{2}}
\theta\|_{L^{2}}\|\omega\|_{L^{\frac{12}{5}}})
\|\Lambda^{\widetilde{s}}\theta\|_{L^{2}}^{\frac{1}{3}}\|\Lambda^{\widetilde{s}+\frac{1}{2}}
\theta\|_{L^{2}}^{\frac{2}{3}}\nonumber\\
 \leq&\frac{1}{4}\|\Lambda^{\widetilde{s}+\frac{1}{2}}\theta\|_{L^{2}}^{2}
 +C(\|\omega\|_{L^{4}}^{2}+\|\omega\|_{L^{6}}^{\frac{3}{2}}+\|\omega\|_{L^{\frac{12}{5}}}^{6})\| \Lambda^{\widetilde{s}}\theta\|_{L^{2}}^{2} \nonumber\\& +C\|\nabla \theta\|_{L^{\infty}}^{2}\| \Lambda^{\widetilde{s}-\frac{1}{2}}u\|_{L^{2}}^{2}
\nonumber\\
 \leq&\frac{1}{4}
 \|\Lambda^{\frac{1}{2}}\theta\|_{H^{\widetilde{s}}}^{2}+C(\|\omega\|_{L^{4}}^{2}+\|\omega\|_{L^{6}}^{\frac{3}{2}}+\|\omega\|_{L^{\frac{12}{5}}}^{6})\| \|\theta\|_{H^{\widetilde{s}}}^{2} \nonumber\\& +C\|\nabla \theta\|_{L^{\infty}}^{2}\| \|u\|_{H^{s}}^{2},\nonumber
\end{align}
where in the last line we have used the following fact due to $\widetilde{s}\leq s+\frac{1}{2}$
$$\|\Lambda^{\widetilde{s}-\frac{1}{2}}u\|_{L^{2}}\leq \|u\|_{H^{\widetilde{s}-\frac{1}{2}}}\leq  \|u\|_{H^{s}}.$$
Thanks to $\nabla\cdot u=0$ and (\ref{cmoi78ye1}), we have
\begin{align}
\left|\int_{\mathbb{R}^{2}}{\Lambda^{s}(
u\cdot\nabla u)\cdot \Lambda^{s}u\,dx}\right|
 =&
\left|
\int_{\mathbb{R}^{2}}{[\Lambda^{s},\,u\cdot\nabla]u\cdot
\Lambda^{s}u\,dx} \right|
 \nonumber\\ \leq&
\|[\Lambda^{s}, u\cdot\nabla]u\|_{L^{2}}\|\Lambda^{s}u\|_{L^{2}}
\nonumber\\
 \leq&C\|\nabla u\|_{L^{\infty}}\|\Lambda^{s}u\|_{L^{2}}^{2}.
 \nonumber
 \end{align}
Putting all the above estimates into \eqref{tdkp330} leads to
 \begin{align}\label{hnkpt11}
& \frac{d}{dt}(\|u(t)\|_{H^{s}}^{2}
+\|\theta(t)\|_{H^{\widetilde{s}}}^{2}) + \|\Lambda^{\frac{1}{2}}\theta\|_{H^{\widetilde{s}}}^{2}\nonumber\\ &\leq
C(1+\|\nabla u\|_{L^{\infty}}+\|\omega\|_{L^{4}}^{2}+\|\omega\|_{L^{6}}^{\frac{3}{2}}+\|\omega\|_{L^{\frac{12}{5}}}^{6})
(\|u\|_{H^{s}}^{2}
+\|\theta\|_{H^{\widetilde{s}}}^{2})\nonumber\\&\quad+C\|\nabla \theta\|_{L^{\infty}}^{2}
(\|u\|_{H^{s}}^{2}
+\|\theta\|_{H^{\widetilde{s}}}^{2}).
\end{align}
Thanks to the following logarithmic Sobolev inequalities (see
\cite{Kozono2} for instance)
\begin{eqnarray}
\|\nabla u\|_{L^{\infty}} \leq C\Big(1+\|u\|_{L^{2}}+ \|\omega\|_{L^{\infty}} \ln\big(e+\|\Lambda^{\sigma}
u\|_{L^{2}}\big)\Big),\quad \sigma>2,\nonumber
\end{eqnarray}
\begin{eqnarray}
\|\nabla \theta\|_{L^{\infty}} \leq C\Big(1+\|\theta\|_{L^{2}}+ \|\Delta
\theta\|_{L^{2}} \sqrt{\ln\big(e+\|\Lambda^{\sigma}
\theta\|_{L^{2}}\big)}\Big),\quad \sigma>2,\nonumber
\end{eqnarray}
we deduce from \eqref{hnkpt11} that
 \begin{align}\label{hnkpt12}
& \frac{d}{dt}(\|u(t)\|_{H^{s}}^{2}
+\|\theta(t)\|_{H^{\widetilde{s}}}^{2}) + \|\Lambda^{\frac{1}{2}}\theta\|_{H^{\widetilde{s}}}^{2}\nonumber\\ &\leq
C(1+\|\omega\|_{L^{\infty}}+\|\omega\|_{L^{4}}^{2}+\|\omega\|_{L^{6}}^{\frac{3}{2}}
+\|\omega\|_{L^{\frac{12}{5}}}^{6})
\ln\big(e+\|u\|_{H^{s}}\big)(\|u\|_{H^{s}}^{2}
+\|\theta\|_{H^{\widetilde{s}}}^{2})\nonumber\\&\quad+C\|\Delta\theta\|_{L^{2}}^{2}
\ln\big(e+\|\Lambda^{\frac{1}{2}}\theta\|_{H^{\widetilde{s}}}\big)
(\|u\|_{H^{s}}^{2}
+\|\theta\|_{H^{\widetilde{s}}}^{2}).
\end{align}
Obviously, one may check that
\begin{align} \label{hnkpt13}
\|\Delta\theta\|_{L^{2}}^{2}&\leq
C\|\theta\|_{L^{2}}^{\frac{2(2\widetilde{s}-3)}{2\widetilde{s}+1}}\|\Lambda^{\widetilde{s}+\frac{1}{2}}
\theta\|_{L^{2}}^{\frac{8}{2\widetilde{s}+1}}\nonumber\\&\leq
C(\|\Lambda^{\frac{1}{2}}\theta\|_{H^{\widetilde{s}}}^{2})^{\frac{4}{2\widetilde{s}+1}}.
\end{align}
Due to $\widetilde{s}\geq s-\frac{1}{2}>\frac{3}{2}$, we have that $\frac{4}{2\widetilde{s}+1}<1$. Moreover, $\|\Delta\theta(t)\|_{L^{2}}^{2}$ belongs to $L_{loc}^{1}(\mathbb{R}_{+})$ due to \eqref{yrtyp03}.
Keeping in mind \eqref{hnkpt12}-\eqref{hnkpt13} and recalling \eqref{tdkp329}-\eqref{yrtyp03}, we obtain by
applying Lemma \ref{Lem01khj} to \eqref{hnkpt12} that
\begin{align}\label{dhnkgp16}
\|u(t)\|_{H^{s}}^{2}
+\|\theta(t)\|_{H^{\widetilde{s}}}^{2}+\int_{0}^{t}
\|\Lambda^{\frac{1}{2}}\theta(\tau)\|_{H^{\widetilde{s}}}^{2} \,d\tau\leq
C(t,\,u_{0},\,\theta_{0}).
\end{align}
With higher regularity estimate \eqref{dhnkgp16} in hand, we thus have
\begin{align}
\int_{0}^{t}
(\|\nabla u(\tau)\|_{L^{\infty}}+\|\nabla \theta(\tau)\|_{L^{\infty}} )\,d\tau\leq \int_{0}^{t}
(\|u(\tau)\|_{H^{s}}+\|\Lambda^{\frac{1}{2}}\theta(\tau)\|_{H^{\widetilde{s}}}) \,d\tau\leq
C(t,\,u_{0},\,\theta_{0}),\nonumber
\end{align}
which implies the uniqueness immediately.
Consequently, we complete the proof of Theorem \ref{addTh4}.
\end{proof}

\vskip .3in
\section*{Acknowledgements}
This work was supported by the Qing Lan Project of Jiangsu Province.

\vskip .3in

\end{document}